\documentclass[sn-mathphys,Numbered]{sn-jnl}% Math and Physical Sciences Reference Style
%%\documentclass[sn-aps]{sn-jnl}% American Physical Society (APS) Reference Style
%%\documentclass[sn-vancouver,Numbered]{sn-jnl}% Vancouver Reference Style
%%\documentclass[sn-apa]{sn-jnl}% APA Reference Style 
%%\documentclass[sn-chicago]{sn-jnl}% Chicago-based Humanities Reference Style
%%\documentclass[default]{sn-jnl}% Default
%%\documentclass[default,iicol]{sn-jnl}% Default with double column layout

%%%% Standard Packages
%%<additional latex packages if required can be included here>

\usepackage{graphicx}%
\usepackage{multirow}%
\usepackage{amsmath,amssymb,amsfonts}%
\usepackage{amsthm}%
\usepackage{mathrsfs}%
\usepackage[title]{appendix}%
\usepackage{xcolor}%
\usepackage{textcomp}%
\usepackage{manyfoot}%
\usepackage{booktabs}%
\usepackage{algorithm}%
\usepackage{algorithmicx}%
\usepackage{algpseudocode}%
\usepackage{listings}%
%%%%

\usepackage{hyperref} 
\usepackage{latexsym}
\usepackage{esint}
\usepackage{mathtools} 
\usepackage{wrapfig}
\usepackage{bbm}
\usepackage{color}
\usepackage{bm} 
\usepackage{todonotes}
\usepackage{stackengine}
\allowdisplaybreaks

\newcommand\ocirc[1]{\ensurestackMath{\stackon[1pt]{#1}{\mkern2mu\circ}}}

\newcommand{\weakto}{\rightharpoonup}

\newcommand{\test}{{\mathscr F^\eta}}
\newcommand{\testE}{{\mathscr F^{(\eta^\epsilon)_\epsilon} }}

%%%%%=============================================================================%%%%
%%%%  Remarks: This template is provided to aid authors with the preparation
%%%%  of original research articles intended for submission to journals published 
%%%%  by Springer Nature. The guidance has been prepared in partnership with 
%%%%  production teams to conform to Springer Nature technical requirements. 
%%%%  Editorial and presentation requirements differ among journal portfolios and 
%%%%  research disciplines. You may find sections in this template are irrelevant 
%%%%  to your work and are empowered to omit any such section if allowed by the 
%%%%  journal you intend to submit to. The submission guidelines and policies 
%%%%  of the journal take precedence. A detailed User Manual is available in the 
%%%%  template package for technical guidance.
%%%%%=============================================================================%%%%

%\jyear{2021}%

%% as per the requirement new theorem styles can be included as shown below
\theoremstyle{thmstyleone}%
\newtheorem{theorem}{Theorem} 
\newtheorem{lemma}[theorem]{Lemma}
\newtheorem{proposition}[theorem]{Proposition}
\newtheorem{corollary}[theorem]{Corollary}

\numberwithin{equation}{section}

%%%%%%%%%%%%%%%%%%%%%%%%%

%  meant for continuous numbers
%%\newtheorem{theorem}{Theorem}[section]% meant for sectionwise numbers
%% optional argument [theorem] produces theorem numbering sequence instead of independent numbers for Proposition 
%%\newtheorem{proposition}{Proposition}% to get separate numbers for theorem and proposition etc.

\theoremstyle{thmstyletwo}%
\newtheorem{remark}{Remark}%

\theoremstyle{thmstylethree}%
\newtheorem{definition}{Definition}%

\raggedbottom
%%\unnumbered% uncomment this for unnumbered level heads

%%%%%%%% New commands %%%%%%%%

\newcommand{\bx}{\mathbf{x}}
\newcommand{\by}{\mathbf{y}}

\newcommand{\bu}{\mathbf{u}}
\newcommand{\bfu}{\mathbf{u}}
\newcommand{\bfv}{\mathbf{v}}

\newcommand{\bv}{\mathbf{v}}
\newcommand{\bn}{\mathbf{n}}
\newcommand{\be}{\mathbf{e}}
\newcommand{\bsigma}{\boldsymbol{\varkappa}}

\newcommand{\bfPsi}{\boldsymbol{\Psi}}
\newcommand{\bfvarphi}{\boldsymbol{\varphi}}
\newcommand{\bfphi}{\boldsymbol{\phi}}

\newcommand{\dy}{\, \mathrm{d}\mathbf{y}}
\newcommand{\dd}{\,\mathrm{d}}

\newcommand{\dx}{\, \mathrm{d} \mathbf{x}}

\newcommand{\dt}{\, \mathrm{d}t}
\newcommand{\ds}{\, \mathrm{d}s}

\newcommand{\D}{\, \mathrm{d}}

\newcommand{\dxt}{\,\mathrm{d}\mathbf{x}\, \mathrm{d}t}

\newcommand{\Div}{\mathrm{div}_{\mathbf{x}}}
\newcommand{\divx}{\mathrm{div}_{\mathbf{x}}}
\newcommand{\divy}{\mathrm{div}_{\mathbf{y}}}

\newcommand{\nabx}{\nabla_{\mathbf{x}}}
\newcommand{\naby}{\nabla_{\mathbf{y}}}

\newcommand{\Delx}{\Delta_{\mathbf{x}}}
\newcommand{\Dely}{\Delta_{\mathbf{y}}}

\newcommand{\R}{\mathbb{R}}

\newcommand{\mt}{\Gamma}

\newcommand{\Oeta}{\mathcal{O}_{\eta}}
%\newcommand{\mathcal{O}zero}{\mathcal{O}_{\eta_0}}

%    Blank box placeholder for figures (to avoid requiring any
%    particular graphics capabilities for printing this document).

\begin{document}

\title[Martingale solutions in stochastic fluid-structure interaction]{Martingale solutions in stochastic fluid-structure interaction}

%%=============================================================%%
%% Prefix	-> \pfx{Dr}
%% GivenName	-> \fnm{Joergen W.}
%% Particle	-> \spfx{van der} -> surname prefix
%% FamilyName	-> \sur{Ploeg}
%% Suffix	-> \sfx{IV}
%% NatureName	-> \tanm{Poet Laureate} -> Title after name
%% Degrees	-> \dgr{MSc, PhD}
%% \author*[1,2]{\pfx{Dr} \fnm{Joergen W.} \spfx{van der} \sur{Ploeg} \sfx{IV} \tanm{Poet Laureate} 
%%                 \dgr{MSc, PhD}}\email{iauthor@gmail.com}
%%=============================================================%%

\author*[1]{\fnm{Dominic} \sur{Breit}}\email{dominic.breit@tu-clausthal.de}
\equalcont{These authors contributed equally to this work.}

\author[1]{\fnm{Prince Romeo} \sur{Mensah}}\email{prince.romeo.mensah@tu-clausthal.de}
\equalcont{These authors contributed equally to this work.}

\author[1]{\fnm{Thamsanqa Castern} \sur{Moyo}}\email{thamsanqa.castern.moyo@tu-clausthal.de}
\equalcont{These authors contributed equally to this work.}

\affil*[1]{\orgdiv{Institute of Mathematics}, \orgname{TU Clausthal}, \orgaddress{\street{Erzstra\ss e 1}, \city{Clausthal-Zellerfeld}, \postcode{38678}, \country{Germany}}}

%\affil[2]{\orgdiv{Department}, \orgname{Organization}, \orgaddress{\street{Street}, \city{City}, \postcode{10587}, \state{State}, \country{Country}}}
%
%\affil[3]{\orgdiv{Department}, \orgname{Organization}, \orgaddress{\street{Street}, \city{City}, \postcode{610101}, \state{State}, \country{Country}}}

%%==================================%%
%% sample for unstructured abstract %%
%%==================================%%

\abstract{We consider a viscous incompressible fluid interacting with a linearly elastic  shell of Koiter type which is located at some part of the boundary. Recently models with stochastic perturbation in the shell equation have been proposed in the literature but only analysed in simplified cases. We investigate the full model with transport noise, where (a part of) the boundary of the fluid domain is randomly moving in time. We prove the existence of a weak martingale solution to the underlying system.}

\keywords{Incompressible Navier--Stokes equation, Transport noise, Fluid-Structure interaction}

%%\pacs[JEL Classification]{D8, H51}

\pacs[MSC 2020 Classification]{76D05; 76D09; 74F10; 60H15}

\maketitle

\section{Introduction}
\label{sec:solveSolventStructure}
The mathematical analysis of systems of partial differential equations arising from fluid-structure interaction has seen a vast progress in the last two decades. This is motivated by a variety of applications, for instance in biomechanics \cite{fluid2014bodnar}, hydro-dynamics \cite{chakrabarti2002theory}, aero-elasticity \cite{dowell2015modern}
 and hemo-mechanics \cite{MR1873203}. 

\subsection{Deterministic models}
We are interested in the case where a viscous incompressible fluid
interacts with an elastic structure located at a part of the boundary of the fluid's domain $\mathcal O\subset\mathbb R^3$ denoted by $\Gamma$. 
 The structure reacts to the forces imposed by the fluid at the boundary. Assuming that this deformation only acts in the normal direction and denoting by $\bn$ the outer unit normal at the reference domain, $\mathcal O$ is deformed to the domain
$\mathcal O_{\eta(t)}$ defined through its boundary
\begin{align}\label{eq:eta}
\partial\mathcal O_{\eta(t)}:=\{\by+\eta(t,\by)\bn(\by):\,\by\in \Gamma\}.
\end{align} 
Here $\eta:(t,\by):  I\times\Gamma\mapsto \eta(t,\by)\in \mathbb{R}$ describes the deformation of the structure and $I:=(0,T)$, for some $T>0$ denotes a time interval. For technical simplification we will suppose that $\Gamma$ is the whole boundary and identify it with the two-dimensional torus (the precise geometric set-up is presented in Section \ref{sec:geometry}).

As a prototype let us consider the following problem where the equation for the shell can be seen as a linearised version of Koiter's model (neglecting lower order terms for simplicity and setting all positive physical constants to 1). In the unknowns
\begin{align*}
&\bu:(t,\bx): I\times\Oeta\mapsto \bu(t,\bx)\in \mathbb{R}^3,
\\
&\pi:(t,\bx): I\times\Oeta\mapsto \pi(t,\bx)\in \mathbb{R},
\end{align*}
accounting for the fluid's velocity field and pressure, respectively (defined on a moving space-time cylinder % defined above in \eqref{eq:eta}
), it reads as (for simplicity we neglect volume forces in the fluid equations)
\begin{align}
\divx \bu=0, 
\label{contEqA}
\\
 \partial_t \bu  + (\mathbf{u}\cdot \nabx)\mathbf{u}  
= 
\Delx \bu -\nabx\pi,
\label{momEqA}
\\
\partial_t^2\eta +\Dely^2\eta =-\bn^\top(\mathbb{T}(\bu, \pi )\bn_\eta)\circ\bm{\varphi}_\eta  \vert\det(\naby\bm{\varphi}_\eta)\vert.
\label{shellEqA}
\end{align}
The system is complemented by the kinetic boundary condition
\begin{align}
\label{noSlipA}
&\bu\circ\bm{\varphi}_\eta   =\bn\partial_t\eta
&\quad \text{on } I \times \Gamma  
\end{align}
at the fluid-structure interface  as well as initial conditions for \eqref{momEqA}-\eqref{shellEqA} and periodic boundary conditions for \eqref{shellEqA}.
Here $\mathbb{T}(\bu, \pi )=(\nabx\bu+\nabx\bu^\top)-\pi\mathbb I_{3\times 3}$
is the stress tensor of the fluid. The vectors $\bn$ and $\bn_\eta$ denote the normal vectors on $\mathcal O$ and $\mathcal O_\eta$, respectively. The function $\bm{\varphi}_\eta $ gives the coordinate transform from $\Gamma \rightarrow\partial\mathcal O_\eta$.
The existence of a weak solution to \eqref{contEqA}--\eqref{noSlipA} has been shown in \cite{lengeler2014weak} (see also \cite{muha2013existence} for the case of a cylindrical shell model). It satisfies the energy balance
%the energy inequality holds in the sense that
\begin{align}\label{eq:enA}
\begin{aligned}
\tfrac{1}{2}\int_{\mathcal O_\eta}|\bu(t)|^2\dx
&+
\int_0^t\int_{\mathcal O_\eta}|\nabx\bu|^2\dx\ds+\tfrac{1}{2}\int_\Gamma|\partial_t\eta(t)|^2\dy
+
\tfrac{1}{2}\int_\Gamma|\Dely\eta(t)|^2\dy\\
&\leq 
\tfrac{1}{2}\int_{\mathcal O_{\eta(0)}}|\bu(0)|^2\dx+\tfrac{1}{2}\int_\Gamma|\partial_t\eta(0)|^2\dy
+
\tfrac{1}{2}\int_\Gamma|\Dely\eta(0)|^2\dy
\end{aligned}
\end{align}
for a.a. $t\in I$, from which one can easily deduce the function spaces in which the weak solution lives.
 The easier case of an elastic plate (where the reference geometry is flat) has been studied before in \cite{grandmont2008existence}. The main advancement in \cite{lengeler2014weak} is a new compactness method which eventually allows to establish compactness of the velocity field. On account of the deformed space-time cylinder on which the problem is posed it is impossible  to apply the standard Aubin-Lions compactness lemma in order to pass to the limit in the convective term of approximate solutions.
Interestingly, this issue is ultimately linked to the divergence-free constraint \eqref{contEqA}. Without it, the compactness can simply be localised thus completely removing the difficulties posed by the moving boundary, see \cite{breitSchw2018compressible} where the compressible Navier--Stokes equations are studied.
 In \cite{MuSc} (where even the fully nonlinear Koiter model is considered) the compactness argument from \cite{lengeler2014weak} has been replaced by an abstract compactness criterion which is more in the spirit of the classical Aubin-Lions result and thus allows for wider applications. Let us finally remark that all the results just mentioned hold under the assumption that there is no self-intersection of the structure (which can always be avoided if $\|\eta\|_{L^\infty_\by}$ is not too large).

\subsection{Stochastic models}
It was recently suggested in
\cite{KuCa1} to consider a stochastic perturbation
in \eqref{shellEqA} to account for random effects in real-life problems and uncertainty in the data. A first step towards a well-posedness theory for such stochastic fluid-structure interaction models is done in \cite{KuCa2},
where the 2D time-dependent Stokes equations are linearly coupled to a structure described by a stochastic 1D wave equation. Although this is only a simplified model (and the boundary is not moving in time) the analysis is already quite advanced. 
As already indicated above, the geometry breaks down
if $\eta$ causes a self-intersection of the domain. In the simplified case, where the reference domain is a box and the deformation only occurs in the vertical direction, this happens exactly when the value of $-\eta$ coincides with the height of the box. If $\eta$ has a Gaussian distribution as in \cite{KuCa2}, this can always happen (though maybe only with a low probability) no matter how short the time horizon is.
%\footnote{We are aware that 
This issue may be circumvented by studying the local-in-time well-posedness of the problem which is done in \cite{KuCa3}. The authors of \cite{KuCa3} study the interaction of an elastic plate (the reference geometry is flat) with the 2D Navier--Stokes equation. The existence time is a random variable about which the only available information is $\mathbb P$-a.s. positivity.

In this paper we aim for the natural next step by considering the full model \eqref{contEqA}--\eqref{noSlipA} globally in time, where \eqref{shellEqA} is subject to some Gaussian noise.\footnote{One can very well add a suitable stochastic term in \eqref{momEqA}. This is covered by our analysis as long as it is energy conservative.}
We take a different perspective to \cite{KuCa1, KuCa2, KuCa3} and do not consider stochasticity entering as an external force but as an intrinsic property of the system. Thus we consider transport noise in the shell equation (see \eqref{shellEq} below). It has the very appealing feature of being energy conservative. If the initial data is deterministic (or simply bounded in probability) we have a pathwise control over the energy and the restrictions on the time interval are the same as in previous deterministic papers such as \cite{grandmont2008existence,lengeler2014weak,MuSc}. Transport noise has a clear physical meaning in fluid mechanical transport processes, see \cite{Holm2,Holm3,Holm1} as well as  \cite{CCR,Fl1,Fl2}. Depending on the particular structure, it can be conservative with respect to several important quantities such as energy, enstrophy and circulation. Note that this is excluded in the case of an It\^{o} noise.
 Also, it has been observed that transport noise has regularising effects on certain ill-posed PDEs, see \cite{Fl0,Fl3}.
Nevertheless, the role of transport noise for elastic materials must be further explored. Understanding the role of noise in the shell equation \eqref{shellEq} is motivated by \cite{KuCa1, KuCa2, KuCa3} and its understanding is only at the beginning stages.

Our goal  is to construct on  random space-time cylinders $\Omega\times I\times\Oeta$ and $\Omega\times I\times\Gamma$, a global weak solution triple (note that the pressure does not enter the weak formulation)
\begin{align*}
&\bu:(\omega,t,\bx): \Omega\times I\times\Oeta\mapsto \bu(\omega,t,\bx)\in \mathbb{R}^3,
\\
&\pi:(\omega,t,\bx): \Omega\times I\times\Oeta\mapsto \pi(\omega,t,\bx)\in \mathbb{R},
\\
&\eta:(\omega,t,\by): \Omega\times I\times\Gamma\mapsto \eta(\omega,t,\by)\in \mathbb{R},
\end{align*}
representing the fluid's velocity, the fluid's pressure and the structure displacement 
of the coupled fluid-structure system given by 
\begin{align}
\divx \bu=0, 
\label{contEq}
\\
 \partial_t \bu  + (\mathbf{u}\cdot \nabx)\mathbf{u}  
= 
\Delx \bu -\nabx\pi,
\label{momEq}
\\
\dd \partial_t\eta +(\Dely^2\eta +g_\eta)\dt 
+
((\bsigma\cdot\naby)\partial_t \eta )\circ
\dd B_t=0,
\label{shellEq}
\end{align}
with $g_\eta=-\bn^\top(\mathbb{T}(\bu, \pi )\bn_\eta)\circ\bm{\varphi}_\eta  \vert\det(\naby\bm{\varphi}_\eta)\vert$.
Here, $\Omega$ is a sample space of a filtered probability space $(\Omega,\mathfrak{F},(\mathfrak{F}_t)_{t\geq0},\mathbb{P})$ with associated expectation $\mathbb{E}(\cdot)$.
%, $I:=(0,T)$ is a time interval with $T>0$ and $\Oeta$ is the deformation of a spatial reference domain $\mathcal{O}$ with respect to a coordinate transform $\bm{\varphi}_{\eta}$ (see Section \ref{sec:geometry}). 
%The boundary of $\mathcal{O}$  is identified with $\Gamma$ and for simplicity, we impose periodic boundary conditions on $\Gamma$. 
%\eqref{momEq}, $\pi:(\omega,t,\bx): \Omega\times I\times\Oeta\mapsto \pi(\omega,t,\bx)\in \mathbb{R}$ represents the pressure in the fluid and its role is to enforce the incompressibility condition \eqref{contEq}. 
Equation  \eqref{shellEq} contains a Stratonovich differential of a real-valued Brownian motion $(B_t)$
and $\bsigma$ is a given solenoidal (incompressible) vector field (i.e. $\divy \bsigma=0$)\footnote{One can allow a (possibly infinite) sum of stochastic transport terms in \eqref{shellEq} without affecting the analysis.}  in $\mathbb{R}^2$.
% such that for any $f$
%\begin{align*} 
%((\Sigma\cdot\nabx)f )\circ
%\dd \mathbf{B}_t
%=
%\sum_{k\geq1}
%((\bsigma_k\cdot\nabx)f )\circ
%\dd B_t^k
%\end{align*}
The initial conditions for \eqref{contEq}--\eqref{shellEq} are
\begin{align}
&\eta(\cdot,0, \cdot) = \eta_0(\cdot, \cdot), \quad \partial_t\eta(\cdot,0, \cdot) = \eta_1(\cdot, \cdot)
&\quad \text{in }  \Omega\times\Gamma ,
\label{initialStructure}
\\
&\mathbf{u}(\cdot,0, \cdot) = \mathbf{u}_0(\cdot, \cdot)
&\quad \text{in }  \Omega\times\mathcal{O}_{\eta_0}.
\label{initialDensityVelo}
\end{align}
With regards to boundary conditions, we supplement  the shell equation \eqref{shellEq} with periodic boundary conditions and impose  
\begin{align}
\label{noSlip}
&\bu\circ\bm{\varphi}_\eta   =\bn\partial_t\eta
&\quad \text{on }\Omega\times I \times \Gamma  
\end{align}
at the fluid-structure interface. Note that \eqref{contEq}--\eqref{noSlip} is  a free-boundary problem where the boundary of the fluid domain is moving randomly in time.

 In the 3D case regularity and uniqueness of solutions to \eqref{contEq}--\eqref{noSlip} is certainly out of reach (at least globally in time) so that one can only hope for the existence of weak martingale solutions. Here weak refers to the analytical concept of distributional derivatives, whereas martingale solutions refers to solutions which are weak in the probabilistic sense (they do not exist on a given stochastic basis; the latter becomes an integral part of the solution).
Such a concept is very common in stochastic evolutionary problems (even on the level of ordinary stochastic differential equations), whenever uniqueness of the underlying system is unavailable.
%\begin{remark}
%We emphasis that the  boundary $\Gamma$ is a two-dimensional surface on which the shell equation is posed. For this reason, we need to impose a further boundary condition on $\Gamma$. We choose periodic boundary conditions although the result does not change nor become more difficult with the imposition of  Dirichlet boundary conditions.
%\end{remark} 

\subsection{The weak formulation}
A first rather philosophical question is to come up with an analytically weak formulation for the problem. In fluid-structure interaction problems the space of test-functions typically depends on the structure displacement $\eta$ (the test-function for the fluid sub-problem and the structure sub-problem must match at the interface as in \eqref{noSlipA} and \eqref{noSlip}). On the other hand, in stochastic PDEs it is common to work with spatial test-functions. This is also our preferred point of view as an $\eta$-dependence of the test-functions in our case means that they depend on time and are also random. The idea now is to start with a pair of test-functions $(\phi,\bfphi)$ on the reference domain (that is $\phi:\Gamma\rightarrow\R$ and $\bfphi:\mathcal O\rightarrow\R^3$) with the correct boundary condition and transform $\bfphi$ to the moving domain. An obvious choice, therefore, is the Hanzawa transform $\bfPsi_\eta:\mathcal O\rightarrow\mathcal O_\eta$ which we formally introduce in Section  \ref{sec:geometry}. Unfortunately, it has the disadvantage of destroying the divergence-free constraint on the test-functions. At the level of weak solutions this cannot be remedied through the recovery of the pressure function as the latter only exists as a distribution on the solenoidal test-functions. Thus, we use instead the Piola transform
\begin{align}
\label{piolaTransformA}
\mathcal{J}_{\eta}\bv
=
\big(\nabx  \bm{\Psi}_{\eta}(\mathrm{det}\nabx  \bm{\Psi}_{\eta})^{-1}
\bv
\big)\circ \bm{\Psi}_{\eta}^{-1},
%, \quad t\in \overline{I}
\end{align}
which preserves the solenoidability of a functions.
Using now $(\iota_\eta\phi,\mathcal J_{\eta(t)} \bm{\phi})$ with $\iota_\eta=(\mathrm{det}\nabx  \bm{\Psi}_{\eta})^{-1}$ we obtain the following weak formulation
\begin{align}
\nonumber
\dd  \bigg(\int_{\mathcal{O}_{\eta}}&\bu  \cdot \mathcal J_{\eta(t)} \bm{\phi}\dx
+
 \int_{\Gamma}   \partial_t\eta \, \iota_\eta\phi\dy
 \bigg)
=
\int_{\Gamma}   \big(\partial_t\eta \, \partial_t(\iota_\eta\phi)
-
 \Dely (\iota_\eta\phi)\, \Dely\eta
 \big)\dy\dt
\\\nonumber&
+ 
\int_{\mathcal{O}_{\eta}} \bigg(\bu  \cdot  \partial_t(\mathcal J_{\eta(t)} \bm{\phi})
+
 ((\bu\cdot\nabx )\mathcal J_{\eta(t)} \bm{\phi}) \cdot \bu
-
 \nabx\bu:\nabx \mathcal J_{\eta(t)} \bm{\phi}
\bigg) \dx\dt 
 \\& 
-
\int_{\Gamma}  
 \partial_t \eta(( \bsigma 
\cdot\naby)\iota_\eta\phi) \dy\circ\dd B_t\label{weaklimit'A}
\end{align}
for all test-functions $(\phi,\bfphi)$ (which clearly depend  only on space), see Section \ref{sec:weak} for the  derivation.  One easily shows that a dense set of pairs of test-functions $(\phi,\bfphi)$ with the correct boundary condition leads to a dense set of pairs of test-functions $(\iota_\eta\phi,\mathcal J_{\eta(t)} \bm{\phi})$ on the moving domain with the right boundary condition, see \cite[page 237]{lengeler2014weak}. Thus our weak formulation is consistent with the strong formulation. However, the Piola transform behaves like $\nabx\bfPsi_\eta$ and  inherits the regularity of $\naby\eta$ so that we require more regularity on $\eta$.
Since the embedding $W^{2,2}(\Gamma)\hookrightarrow W^{1,\infty}(\Gamma)$ fails in two dimensions, $\nabx\bfPsi_\eta$ is not bounded. Thus the information from \eqref{eq:enA} is not sufficient to give a meaning to all the terms in \eqref{weaklimit'A}. Hence we need additional regularity. A crucial point in our approach is, therefore, to establish an estimate for the $L^2(I;W^{2+s,2}(\Gamma))$-norm of $\eta$ for all $s\in(0,1/2)$. One can easily check by using H\"older's inequality and Sobolev's embedding that this information, together with \eqref{eq:enA}, is sufficient to define all integrals appearing in \eqref{weaklimit'A}. Different to \eqref{eq:enA} this estimate is not independent of the transport noise and hence only holds in expectation. As it turns out, the regularity of the terms arising from the transport noise have just enough regularity to close the estimate. Details can be found in Section \ref{sec:highreg}.
Concluding this discussion, the additional fractional differentiability of the shell displacement must be included in the definition of  a solution, see Definition \ref{def:strongSolutionAlone}.  
%The precise formulation of a martingale solution to \eqref{contEq}--\eqref{noSlip} can be found in Definition \ref{def:strongSolutionAlone} followed by our main result giving its existence in Theorem \ref{thm:main}.

\subsection{Plan}

This work straddles different fields of mathematics including fluid mechanics, partial differential equations, differential geometry and stochastic analysis. In order to make this work as self-contained as possible, we collect in Section \ref{sec:frame} useful results in the different fields of mathematics that are essential in establishing our result. We begin by giving a rigorous interpretation of the Stratonovich integral in \eqref{shellEq} after which we introduce the geometric setup for the fluid-structure system \eqref{contEq}--\eqref{shellEq}. We also present the functional analytic framework
(function spaces on moving domains) and present some key results necessary for our analysis (extension operators). Finally, we make precise, the notion of a solution that we are interested in (Definition \ref{def:strongSolutionAlone}) and state our main result (Theorem \ref{thm:main}).

The proof of our main result can be summarized into three main steps. In Section \ref{sec:linear}, we consider an extension of the fluid-structure system that incorporate `artificial' regularising terms in the shell equation \eqref{shellEq}. We then construct a solution to the linearised version of this extended system using a Galerkin approximation and stochastic compactness tools. We then move to Section \ref{sec:regul} where we use a fixed-point argument to remove the linearisation performed in the previous section and obtain a solution to the fully nonlinear system (with the extra regularising terms in the shell equation). To complete the proof of the main result, we pass to the limit in the regularisation parameter in Section \ref{sec:limit} to finally obtain a solution for \eqref{contEq}--\eqref{noSlip}. In each stage we apply a refined stochastic compactness method which is based on Jakubowski's extension of the Skorokhod representation theorem  \cite{jakubow}.
 In our case it is crucial to re-interpret the compactness lemma from \cite{MuSc} in the context of tightness of probability measures, see Sections \ref{ssec:comp} and \ref{ssec:compv}. 

\begin{remark}[The 2D case]
One might wonder whether it is possible to show the global-in-time existence of strong pathwise solutions to \eqref{contEq}--\eqref{noSlip} (solutions existing on a given stochastic basis). In the 2D case the existence of global-in-time strong solutions to \eqref{contEqA}--\eqref{noSlipA} has been shown in \cite{breit2022regularity} with additional dissipation (see also for \cite{MR3466847} for previous results for elastic plates) and in \cite{ScSu} for the case of plates (but without dissipation). In all cases the approach is heavily based on taking temporal derivatives of $\bu$ and $\partial_t\eta$, which is not possible for \eqref{contEq}--\eqref{noSlip}.
Hence its is unclear if one should even expect such a result here.
\end{remark}

\section{Mathematical framework and the main result}
\label{sec:frame}
\subsection{Stratonovich integrals}
\label{sec:strat}

Let $(\Omega,\mathfrak F,(\mathfrak F_t)_{t\geq0},\mathbb P)$ be a stochastic basis with a complete, right-continuous filtration and let $(B_t)$ represent a real-valued Brownian motion relative to $(\mathfrak F_t)$. 
We consider a smooth solenoidal vector field $\bsigma:\mt\rightarrow\R^2$.
If $\xi\in L^2(\Omega;C(\overline I;W^{1,2}(\mt)))$ is $(\mathfrak F_t)$-adapted%\footnote{Adaptedness can be interpreted in the sense of random distributions, cf. \cite[Chapter 2.8]{BFHbook}.}
, the stochastic integral
\begin{align*}
\int_0^t\bsigma\cdot\naby\xi \,\D B_s
\end{align*}
is well-defined in the sense of It\^{o} with values in $L^{2}(\mt)$. 
If we only have
$\xi\in L^2(\Omega;C(\overline I;L^{2}(\mt)))$ one can use the identity $(\bsigma\cdot\naby)\xi=\divy(\bsigma\xi)$ and 
define the stochastic integral
\begin{align*}
\int_0^t\divy(\xi\bsigma) \,\D B_s
\end{align*}
with values in $W^{-1,2}(\mt)$. 
We define the Stratonovich integrals in \eqref{shellEq} by means of the It\^{o}-Stratonovich correction, that is
\begin{equation}
\begin{aligned}
\label{itoStroCross}
\int_0^t\int_{\mt}\xi\bsigma\cdot\naby\phi\dy \,\circ\D B_s
&=
\int_0^t\int_{\mt}\xi \bsigma\cdot\naby\phi\dy \,\D B_s
\\&+
\frac{1}{2}\Big\langle\Big\langle\int_{\mt}\xi \bsigma\cdot\naby\phi\dy,B_t\Big\rangle\Big\rangle_t
\end{aligned}
\end{equation}
for $\phi\in W^{1,2}(\mt)$.
Here $\langle\langle\cdot,\cdot\rangle\rangle_t$ denotes the cross variation. We compute now the cross variations by means of \eqref{shellEq}. If $\xi=\partial_t\eta$, where $\eta$ solves \eqref{shellEq}, we have for all $\phi\in W^{2,2}(\mt)$ and $t\in I$,
\begin{align*}
\int_{\mt}\partial_t\eta \bsigma \cdot\naby\phi\dy
=&
\int_{\mt}\partial_t\eta(0) \bsigma \cdot\naby\phi\dy
-
\int_0^t\int_{\mt}\Dely\eta\Dely((\bsigma \cdot\naby) \phi)\dy\ds
\\&-
\int_0^t\int_{\mt}g_\eta((\bsigma \cdot\naby) \phi)\dy\ds
%\\&-
%\frac{1}{2}
%\int_0^t\int_{\mt}((\bsigma \cdot\naby)\xi)((\bsigma \cdot\naby)(\bsigma\cdot\naby)\phi)\dy\ds
%+
%\int_0^t\int_{\mt}\xi\,((\bsigma \cdot\naby)(\bsigma\cdot\naby)\phi)\dy \,\D B_s,
%\\&-
%\frac{1}{2}
%\int_{\mt}\lim _{P\rightarrow 0}\sum_{i=0}^{k-1}((\bsigma \cdot\naby)(\bsigma\cdot\naby)\phi)(\xi(s_{i+1})-\xi(s_{i}))(B_{s_{i+1}}-B_{s_{i}})\dy 
\\&+
\frac{1}{2}
\Big\langle\Big\langle
\int_0^t\int_{\mt}\partial_t\eta\,((\bsigma \cdot\naby)(\bsigma\cdot\naby)\phi)\dy ,B_t\Big\rangle\Big\rangle_t\\
&+
\int_0^t\int_{\mt}\partial_t\eta\,((\bsigma \cdot\naby)(\bsigma\cdot\naby)\phi)\dy \D B_s.
\end{align*}
%with partition $P:=\{0=s_0<s_1<\ldots< s_k=t\}$. 
Only the last term on the right contributes to the cross variation when inserted in \eqref{itoStroCross}.
Plugging the previous considerations together we obtain
%\begin{align*}
%&\int_0^t\divy (\xi \bsigma ) \,\circ\D B_s=\int_0^t\divy (\xi \bsigma ) \,\D B_s- \frac{1}{2}\int_0^t\divy(\bsigma\otimes\bsigma\naby\xi)\ds,
%\end{align*}
%to be understood in $W^{-1,2}(\mt)$ or $W^{-2,2}(\mt)$, depending on the regularity of $\xi$. By using $\divy\xi=0$ and integration by parts, we can rewrite the above as
\begin{align*}
&\int_0^t(\bsigma\cdot\naby)\partial_t\eta \,\circ\D B_s=\int_0^t(\bsigma\cdot\naby)\partial_t\eta \,\D B_s-\frac{1}{2}\int_0^t(\bsigma\cdot\naby)(\bsigma\cdot\naby)\partial_t\eta \ds.
\end{align*}
to be understood in $W^{-1,2}(\mt)$ or $W^{-2,2}(\mt)$, respectively, depending on the regularity of $\partial_t\eta$.

\subsection{Geometric setup}
\label{sec:geometry}
 The spatial domain $\mathcal O$ is assumed to be an open bounded subset of $\mathbb{R}^3$ with smooth boundary $\partial\mathcal O$ and an outer unit normal ${\bn}$. We assume that
 $\partial\mathcal O$ can be parametrised by an injective mapping ${\bfvarphi}\in C^k(\Gamma;\R^3)$ for some sufficiently large $k\in\mathbb N$, where $\Gamma$ is the two-dimensional torus. We suppose for all points $\by=(y_1,y_2)\in \Gamma$ that the pair of vectors  
%$\mathbf{a}_i(\by):= 
$\partial_i {\bfvarphi}(\by)$, $i=1,2,$ is linearly independent.
%If $n=2$ the corresponding assumption simply asks for $\partial_\by\bfvarphi=\partial_y\bfvarphi$ not to vanish.
 For a point $\bx$ in the neighbourhood
of $\partial\mathcal O$, we define the functions $\by$ and $s$ by   
\begin{align*}
 \by(\bx)=\arg\min_{\by\in\Gamma}|\bx-\bfvarphi(\by)|,\quad s(\bx)=(\bx-\mathbf p(\bx))\cdot\bn(\by(\bx)),
 \end{align*}
where we used the projection $\mathbf p(\bx)=\bfvarphi(\by(\bx))$. We define $L>0$ to be the largest number such that $s,\by$ and $\mathbf p$ are well-defined on $S_L$, where
\begin{align}\label{eq:boundary1}
S_L=\{\bx\in\R^n:\,\mathrm{dist}(\bx,\partial\mathcal O)<L\}.
\end{align}
Due to the smoothness of $\partial\mathcal O$ for $L$ small enough we have $|s(\bx)|=\min_{\by\in\Gamma}|\bx-\bfvarphi(\by)|$ for all $\bx\in S_L$. This implies that $S_L=\{s\bn(\by)+\by:(s,\by)\in (-L,L)\times \omega\}$.
For a given function $\eta : I \times \Gamma \rightarrow\R$ we parametrise the deformed boundary by
\begin{align}\label{eq:bfvarphi}
{\bfvarphi}_\eta(t,\by)={\bfvarphi}(\by) + \eta(t,\by){\bn}(\by), \quad \,\by \in \Gamma,\,t\in I.
\end{align}
%By possibly decreasing $L$, one easily deduces from this formula that $\Omega_{\eta}$ does not degenerate, that is
%\begin{equation}\label{eq:1705}
%\begin{aligned}
%\partial_1\bfvarphi_\eta\times\partial_2\bfvarphi_\eta(t,\by)\neq0,\quad
% \bfn(\by)\cdot\bfn_{\eta(t)}(\by)&>0,\quad \,\by \in \Gamma,\,t\in I,
% \end{aligned}
%\end{equation}
%provided that $\sup_t\|\eta\|_{W^{1,\infty}_{\by}}<L$. Here $\bfn_{\eta(t)}$
%is the normal of the domain $\mathcal O_{\eta(t)}$
% defined through
%\begin{equation}\label{eq:2612}
%\partial\mathcal O_{\eta(t)}=\set{{\bfvarphi}(\by) + \eta(t,\by){\bfn}(\by):\by\in \Gamma}.
%\end{equation}
With an abuse of notation we define the deformed space-time cylinder as $I\times\mathcal O_\eta=\bigcup_{t\in I}\{t\}\times\mathcal O_{\eta(t)}\subset\R^{4}$.
The corresponding function spaces for variable domains are defined as follows.
\begin{definition}{(Function spaces)}
For $I=(0,T)$, $T>0$, and $\eta\in C(\overline{I}\times\omega)$ with $\|\eta\|_{L^\infty(I\times\Gamma)}< L$ we define for $1\leq p,r\leq\infty$
\begin{align*}
L^p(I;L^r(\mathcal O_\eta))&:=\big\{v\in L^1(I\times\mathcal O_\eta):\,\,v(t,\cdot)\in L^r(\mathcal O_{\eta(t)})\,\,\text{for a.e. }t,
\\&\qquad\qquad\qquad
\|v(t,\cdot)\|_{L^r(\mathcal O_{\eta(t)})}\in L^p(I)\big\},\\
L^p(I;W^{1,r}(\mathcal O_\eta))&:=\big\{v\in L^p(I;L^r(\mathcal O_\eta)):\,\,\nabx v\in L^p(I;L^r(\mathcal O_\eta))\big\}.
\end{align*}
\end{definition}
\noindent 
To establish a relationship between the 
fixed domain and the time-dependent domain, we introduce the Hanzawa transform $\bm{\Psi}_\eta : \mathcal O \rightarrow\mathcal O_\eta$ defined by
\begin{equation}
\label{map}
\bfPsi_\eta(\bx)
=
 \left\{
  \begin{array}{lr}
    \mathbf{p}(\bx)+\big(s(\bx)+\eta(\by(\bx))\phi(s(\bx))\big)\bn(\by(\bx)) &\text{if dist}(\bx,\partial\mathcal O)<L,\\
    \bx &\text{elsewhere}.
  \end{array}
\right.
\end{equation}
for any $\eta:\omega\rightarrow (-L,L)$. Here $\phi\in C^\infty(\mathbb R)$ is such that 
$\phi\equiv 0$ in a neighborhood of $-L$ and $\phi\equiv 1$ in a neighborhood of $0$. The other variables  $\mathbf{p}$, $s$ and $\bn$ are as defined earlier in this section.
A straightforward verification shows that the inverse of $\bm{\Psi}_{\eta(t)}$ is $\bm{\Psi}_{-\eta(t)}$.

In order to obtain a weak formulation for the fluid-structure  system, we also introduce the Piola transform
\begin{align}
\label{piolaTransform}
\mathcal{J}_{\zeta}\bv
=
\big(\nabx  \bm{\Psi}_{\zeta}(\mathrm{det}\nabx  \bm{\Psi}_{\zeta})^{-1}
\bv
\big)\circ \bm{\Psi}_{\zeta}^{-1}
%, \quad t\in \overline{I}
\end{align}
of a vector field $\mathbf{v}:\mathcal O\rightarrow\R^3$  with respect to a mapping $\zeta:\Gamma \rightarrow \mathbb{R}$. The Piola transform is invertible with inverse
\begin{align}
\label{piolaTransformInverse}
\mathcal{J}_{\zeta}^{-1}\bv
=
\big((\nabx  \bm{\Psi}_{\zeta})^{-1}(\mathrm{det}\nabx  \bm{\Psi}_{\zeta}) 
\bv
\big)\circ \bm{\Psi}_{\zeta} 
%, \quad t\in \overline{I}
.
\end{align}
It preserves vanishing boundary values as well as the divergence-free property of a function.
In order to compensate for the additional factor $(\mathrm{det}\nabx  \bm{\Psi}_{\zeta})^{-1}
$ in the trace of $\mathcal{J}_{\zeta}\bv$ we define the mapping
\begin{align*}
\iota_\zeta \phi:=(\mathrm{det}\nabx  \bm{\Psi}_{\zeta}
%\circ\bfvarphi
)^{-1}
\phi
\end{align*}
for a function $\phi:\Gamma\rightarrow\R$. If $\bfphi\circ \bfvarphi=\phi$ on $\Gamma$ it follows that $(\mathcal J_\eta\bfphi)\circ \bfvarphi_\eta=\iota_\eta\phi$ on $\Gamma$.
Thus a pair of test-functions $(\phi,\bfphi)$ with the correct boundary condition leads to a pair of test-functions $(\iota_\eta\phi,\mathcal J_{\eta(t)} \bm{\phi})$ on the moving domain with the right boundary condition. Also, a dense set of test-functions on the reference domain  leads to a dense set of test-functions on the moving domain, see \cite[page 237]{lengeler2014weak}.

We finish this section by recalling  the following Aubin-Lions type lemma which is shown in~\cite[Theorem 5.1. \& Remark 5.2.]{MuSc} and slightly reformulated for our purposes.
\begin{theorem}
\label{thm:auba}
Let $X,Z$ be two Banach spaces, such that $X'\subset Z'$.
Assume that $f_n:I\to X$ and $g_n: I\to X'$, such that $g_n\in L^\infty(I;Z')$ uniformly. Moreover assume the following: 
\begin{enumerate}
\item[(a)] The {\em boundedness}: for some $s\in [1,\infty]$ we have that $(f_n)$ is bounded in $L^s(X)$ and $(g_n)$ in $L^{s'}(X')$.
%\item The {\em uniform bound} on one sequence
%\[
%\sup_n\norm{f_n}_{L^p(0,T;Y)}\leq C.
%\]
%moreover we suppose that $f_n\toweakstar f$ in $L^p(0,T;X)$.
\item[(b)] The {\em approximability-condition} is satisfied: For every $\kappa\in (0,1]$ there exists a $f_{n,\kappa}\in L^s(I;X)\cap L^1(I;Z)$,
%
%and every $t\in [0,T]$ there exists a mollifying operator $\bor{X\ni}\phi\mapsto (\phi)_{\kappa,t}\ni Z$.
such that for every $\epsilon\in (0,1)$ there exists some $\kappa_\epsilon\in(0,1)$ (depending only on $\epsilon$) such that %for a.e. $t\in [0,T]$
\[
%\sup_{k,n}
\|f_n-f_{n,\kappa}\|_{L^s(I;X)}\leq \epsilon\text{ for all } \kappa\in (0,\kappa_\epsilon]%\text{ and }\norm{f_{n,\kappa}}_{Z}\leq  C(\kappa)(1+\norm{f_n}_Y^p).
\]
and for every $\kappa\in (0,1]$ there is some $C(\kappa)$ such that
\[
\|f_{n,\kappa}\|_{L^1(I;Z)}\dt\leq C(\kappa).
\]
%Moreover, we assume that for every $\kappa$ there is a function $f_\kappa$, and a subsequence such that $f_{n,\kappa}\rightharpoonup f_\kappa$ in $L^s(0,T;X)$.

\item[(c)] The {\em equi-continuity} of $g_n$: We require that there exists some $\alpha\in (0,1]$,  functions $A_n$ with $A_n\in L^1(I)$ uniformly, such that for every $\kappa>0$ that there exist some $C(\kappa)>0$ and some $n_\kappa\in \mathbb N$ such that for $\tau>0$, $n\geq n_\kappa$ and a.e.\ $t\in [0,T-\tau]$
\[
 \Big|\tau^{-1}\int_{0}^\tau\langle g_n(t)-g_n(t+s),f_{n,\kappa}(t)\rangle_{X',X}\,\dd s\Big| \leq C(\kappa)\tau^\alpha(A_n(t)+1).
\]

\item[(d)] The {\em compactness assumption} is satisfied: $X'\hookrightarrow \hookrightarrow Z'$. More precisely, every uniformly bounded sequence in $X'$ has a strongly converging subsequence in $Z'$. 
\end{enumerate}
Then there is a subsequence, such that
\[
\int_0^T\langle f_n,g_n\rangle_{X,X'}\dt\rightarrow \int_0^T\langle f,g\rangle_{X,X'}\dt.
\]
\end{theorem}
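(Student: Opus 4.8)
The plan is to run a quantitative version of the Aubin--Lions argument. First, from the boundedness in (a) one extracts, along a subsequence (not relabelled), weak limits $f_n\rightharpoonup f$ in $L^s(I;X)$ and $g_n\rightharpoonup g$ in $L^{s'}(I;X')$, as well as (up to a further subsequence) $g_n\rightharpoonup g$ weakly-$\ast$ in $L^\infty(I;Z')$ to the same $g$ (for definiteness we take $s\in(1,\infty)$; the limiting exponents need only routine changes in the choice of topologies). The goal is then $\int_0^T\langle f_n,g_n\rangle\dt\to\int_0^T\langle f,g\rangle\dt$, reached through a triple approximation in a fixed order. Given $\epsilon>0$, fix $\kappa=\kappa_\epsilon\in(0,1)$ as in (b), and --- after extending each $g_n$ past $t=T$, say by reflection, so that the next expression is defined on all of $I$ --- introduce the short forward-in-time average $g_n^\tau(t):=\tau^{-1}\int_0^\tau g_n(t+s)\ds$, $\tau\in(0,1)$. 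We then decompose
\begin{align*}
\int_0^T\langle f_n,g_n\rangle\dt=\int_0^T\langle f_n-f_{n,\kappa},g_n\rangle\dt &+\int_0^T\langle f_{n,\kappa},g_n-g_n^\tau\rangle\dt\\ &+\int_0^T\langle f_{n,\kappa},g_n^\tau\rangle\dt
\end{align*}
and treat the three pieces by sending $n\to\infty$, then $\tau\to0$, then $\kappa\to0$.

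The first piece is bounded by $\|f_n-f_{n,\kappa}\|_{L^s(I;X)}\|g_n\|_{L^{s'}(I;X')}\le C\epsilon$ uniformly in $n$ and in $\kappa\le\kappa_\epsilon$, by (a) and (b). For the second piece one uses $g_n(t)-g_n^\tau(t)=\tau^{-1}\int_0^\tau\big(g_n(t)-g_n(t+s)\big)\ds$, so that $\langle f_{n,\kappa}(t),g_n(t)-g_n^\tau(t)\rangle$ is exactly the quantity estimated in assumption (c); integrating the bound $C(\kappa)\tau^\alpha(A_n(t)+1)$ over $t\in I$ and invoking the uniform $L^1(I)$-bound on $A_n$ shows this piece is $\le C(\kappa)\tau^\alpha$, uniformly in $n$. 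The third piece carries the genuine compactness: for fixed $\kappa,\tau$ the average $g_n^\tau$ is bounded in $L^\infty(I;X')$ (by H\"older, for fixed $\tau$), while $\partial_t g_n^\tau(t)=\tau^{-1}\big(g_n(t+\tau)-g_n(t)\big)$ is bounded in $L^\infty(I;Z')$ thanks to the uniform $L^\infty(I;Z')$-bound on $g_n$, so $g_n^\tau$ is equi-Lipschitz into $Z'$ and pointwise precompact in $Z'$ by the compact embedding $X'\hookrightarrow\hookrightarrow Z'$ from (d); Arzel\`a--Ascoli then yields a further subsequence with $g_n^\tau\to g^\tau$ in $C(\overline I;Z')$, where $g^\tau$ is the forward average of $g$. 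Splitting $\int_0^T\langle f_{n,\kappa},g_n^\tau\rangle\dt=\int_0^T\langle f_{n,\kappa},g_n^\tau-g^\tau\rangle\dt+\int_0^T\langle f_{n,\kappa},g^\tau\rangle\dt$, the first summand is $\le\|f_{n,\kappa}\|_{L^1(I;Z)}\|g_n^\tau-g^\tau\|_{C(\overline I;Z')}\to0$ as $n\to\infty$ --- this is precisely the step for which the $L^1(I;Z)$-bound in (b) is designed --- and the second converges to $\int_0^T\langle f_\kappa,g^\tau\rangle\dt$, where $f_\kappa$ denotes the weak $L^s(I;X)$-limit of $f_{n,\kappa}$ and $g^\tau\in L^{s'}(I;X')$.

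Letting $\tau\to0$, the Lebesgue differentiation theorem gives $g^\tau\to g$ in $L^{s'}(I;X')$, hence $\int_0^T\langle f_\kappa,g^\tau\rangle\dt\to\int_0^T\langle f_\kappa,g\rangle\dt$; moreover $\big|\int_0^T\langle f_\kappa-f,g\rangle\dt\big|\le\|f_\kappa-f\|_{L^s(I;X)}\|g\|_{L^{s'}(I;X')}\le C\epsilon$, using the bound in (b) and weak lower semicontinuity of the norm applied to $f_n-f_{n,\kappa}\rightharpoonup f-f_\kappa$. Collecting the four estimates gives $\limsup_n\big|\int_0^T\langle f_n,g_n\rangle\dt-\int_0^T\langle f,g\rangle\dt\big|\le C\epsilon$. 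Since $\epsilon>0$ is arbitrary and $f,g$ do not depend on $\epsilon$, nesting the $\tau$- and $\kappa/\epsilon$-dependent subsequences and diagonalising over a sequence $\epsilon_j\to0$ produces a single subsequence along which the claimed convergence holds.

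I expect the crux to be the ordering and mutual compatibility of these approximations rather than any single estimate: the time-average $g_n^\tau$ converges strongly only in $C(\overline I;Z')$, never in $L^{s'}(I;X')$, so it can only be paired against functions lying in the smaller space $Z$. This is what forces the approximation of $f_n$ by the $L^1(I;Z)$-bounded $f_{n,\kappa}$ from (b) to be performed \emph{before} the time-averaging of $g_n$, and it is why (b) must supply an $L^1(I;Z)$-bound and not merely an $L^s(I;X)$-approximation. By comparison, verifying the time-regularity of $g_n^\tau$, the appeal to Arzel\`a--Ascoli, the reflection extension, and the bookkeeping of the successive subsequences are all routine.
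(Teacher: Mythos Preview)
The paper does not prove this theorem: it is quoted (``slightly reformulated'') from Muha--Schwarzacher \cite[Theorem~5.1 \& Remark~5.2]{MuSc}, with no argument given. So there is nothing in the paper to compare against directly.

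That said, your sketch is essentially the proof in \cite{MuSc}: approximate $f_n$ by $f_{n,\kappa}$, Steklov-average $g_n$ to $g_n^\tau$, use (c) for the averaging error, and combine the uniform $L^\infty(I;X')$ bound on $g_n^\tau$ with the Lipschitz-in-$Z'$ bound and the compact embedding (d) to get $g_n^\tau\to g^\tau$ in $C(\overline I;Z')$ via Arzel\`a--Ascoli, which then pairs against the $L^1(I;Z)$ bound from (b). Two small comments. First, the region $t\in[T-\tau,T]$ is not covered by assumption (c), and reflection of $g_n$ does not by itself extend (c) there; one typically absorbs this strip separately using $g_n\in L^\infty(I;Z')$ and $f_{n,\kappa}\in L^1(I;Z)$, sending $\tau\to0$ after $n\to\infty$ exactly as in your ordering. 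Second, you can avoid extracting a weak limit $f_\kappa$ of $f_{n,\kappa}$ (and hence one layer of diagonalisation) by splitting the last piece instead as
\[
\int_0^T\langle f_{n,\kappa},g^\tau\rangle
=\int_0^T\langle f_{n,\kappa}-f_n,g^\tau\rangle
+\int_0^T\langle f_n,g^\tau-g\rangle
+\int_0^T\langle f_n-f,g\rangle
+\int_0^T\langle f,g\rangle,
\]
where the first term is $\le C\epsilon$ by (b), the second vanishes as $\tau\to0$ uniformly in $n$, and the third vanishes by the weak convergence $f_n\rightharpoonup f$. This leaves $f,g$ as the only limit objects and makes the subsequence bookkeeping trivial.
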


\subsection{Solenoidal extension}
In this section, we present a linear solenoidal extension operator that maps boundary elements of a spatial domain into the interior. For this end, we first consider the \textit{corrector map}
\begin{align*}
\mathscr K_\eta:L^1(\Gamma )\rightarrow\mathbb{R},\qquad \mathscr K_\eta(\xi)=\frac{\int_{\mathcal{A}_\kappa}{\xi}(\by(\bx))\lambda_\eta(t,\bx)\dx}{\int_{\mathcal{A}_\kappa}\lambda_\eta(t,\bx)\dx},
\end{align*}
where $\lambda_\eta\geq 0$  for $(t,\bx)\in I\times \mathcal{A}_\kappa$ is an appropriately chosen weight function, cf. \cite[equ. (3.3)]{MuSc}, and $\mathcal{A}_\kappa:=S_{\kappa/2}\setminus S_\kappa$. It satisfies
\begin{align*}
&\Vert\mathscr K_\eta(\xi)\Vert_{L^q(I)}
\lesssim \Vert\xi\Vert_{L^q(I;L^1(\Gamma ))}
\\
&\Vert\partial_t\mathscr K_\eta(\xi)\Vert_{L^q(I)}
\lesssim \Vert\partial_t\xi\Vert_{L^q(I;L^1(\Gamma ))}
+
\Vert\xi\partial_t\eta\Vert_{L^q(I;L^1(\Gamma ))}
\end{align*}
for all $q\in[1,\infty]$. The corrector $\mathscr K_\eta$ above preconditions the boundary data to be compatible with the interior solenoidality. 
The following is proved in \cite[Prop. 3.3]{MuSc} and it provides a solenoidal extension. For that we introduce the solenoidal space $W^{1,1}_{\Div}(\mathcal O\cup S_{\alpha} ):=\{\mathbf w\in W^{1,1}(\mathcal O\cup S_{\alpha} )\, :\, \divx\mathbf w=0\}$. 
\begin{proposition}
\label{prop:musc}
For a given $\eta\in L^\infty(I;W^{1,2}(\Gamma))$ with $\|\eta\|_{L^\infty(I\times \Gamma)}<\alpha<L$, there is a linear operator
\begin{align*}
 \test :\{\xi\in L^1(I;W^{1,1}(\Gamma)):\,\mathscr K_\eta(\xi)=0\}\rightarrow L^1(I;W^{1,1}_{\Div}(\mathcal O\cup S_{\alpha} )),
\end{align*}
such that the tuple $( \test (\xi-\mathscr K_\eta(\xi)),\xi-\mathscr K_\eta(\xi))$ satisfies
\begin{align*}
 \test (\xi-\mathscr K_\eta(\xi))&\in L^\infty(I;L^2({\mathcal{O}}_\eta))\cap L^2(I;W^{1,2}_{\Div}({\mathcal{O}}_\eta)),\\
\xi-\mathscr K_\eta(\xi)&\in L^\infty(I;W^{2,2}( \Gamma ))\cap  W^{1,\infty}(I;L^{2}( \Gamma )),
\\
( \test &(\xi-\mathscr K_\eta(\xi))\circ\bm{\varphi}_\eta=\bn(\xi-\mathscr K_\eta(\xi)),
\\
\partial_t( \test &(\xi-\mathscr K_\eta(\xi))  \in L^2(I;L^2({\mathcal{O}}_\eta)),
\\
 \test (\xi-\mathscr K_\eta&(\xi))(t,x)=0 \text{ for } (t,x)\in I \times ({\mathcal{O}} \setminus S_{\alpha})
\end{align*}
provided we have $\xi\in L^\infty(I;W^{2,2}( \Gamma ))\cap  W^{1,\infty}(I;L^{2}(\Gamma))$.
In particular, we have the estimates
\begin{align}\label{musc1}
\| \test (\xi-\mathscr K_\eta(\xi))\|_{L^q(I;W^{1,p}({\mathcal{O}} \cup S_{\alpha}  ))}\lesssim \|\xi\|_{L^q(I;W^{1,p}( \Gamma ))}+\|\xi\naby \eta\|_{L^q(I;L^{p}( \Gamma ))},\\
\label{musc2}\|\partial_t \test (\xi-\mathscr K_\eta(\xi))\|_{L^q(I;L^{p}( {\mathcal{O}}\cup S_{\alpha}))}\lesssim \|\partial_t\xi\|_{L^q(I;L^{p}( \Gamma ))}+\|\xi\partial_t \eta\|_{L^q(I;L^{p}( \Gamma ))},
\end{align}
for any $p\in (1,\infty),q\in[1,\infty]$.
\end{proposition}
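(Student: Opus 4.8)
The plan is to realise $\test$ in the curvilinear collar coordinates $(s,\by)\in(-L,L)\times\Gamma$ adapted to $\partial\mathcal O$, writing the extension as an explicit lifting of the boundary datum plus a Bogovskii-type divergence corrector. First I would fix a cut-off $\chi\in C^\infty(\mathbb R)$ with $\chi\equiv1$ near $0$ and support in $(-\alpha,\alpha)$ and, imitating \eqref{map}, define on $S_\alpha$ the naive lifting
\[
\mathbf w_0\big(\mathbf{p}(\bx)+s(\bx)\bn(\by(\bx))\big):=\chi\big(s(\bx)-\eta(\by(\bx))\big)\,\xi(\by(\bx))\,\bn(\by(\bx)),
\]
extended by $0$ on $\mathcal O\setminus S_\alpha$. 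By construction $\mathbf w_0$ is linear in $\xi$, supported in $S_\alpha$, and attains $\bn\xi$ along the deformed interface, i.e.\ $\mathbf w_0\circ\bfvarphi_\eta=\bn\xi$; the chain rule gives $\|\mathbf w_0\|_{W^{1,p}(\mathcal O\cup S_\alpha)}\lesssim\|\xi\|_{W^{1,p}(\Gamma)}+\|\xi\naby\eta\|_{L^p(\Gamma)}$, the product term arising when the tangential gradient falls on $\chi(s-\eta)$, which is exactly the right-hand side of \eqref{musc1}.

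Next, one checks that in the collar chart $\divx\mathbf w_0$ involves no derivative of $\xi$ or of $\eta$ (it is $\chi'(s-\eta)\,\xi$ plus a curvature term), so $\|\divx\mathbf w_0\|_{L^p(\mathcal O_\eta)}\lesssim\|\xi\|_{L^p(\Gamma)}$ and it is supported in the thin shell carrying $\chi'$, which we identify with $\mathcal A_\kappa$. By the divergence theorem the flux $\int_{\partial\mathcal O_\eta}\mathbf w_0\cdot\bn_\eta\,\mathrm dS=\int_{\mathcal O_\eta}\divx\mathbf w_0\,\dx$; the weight $\lambda_\eta$ entering $\mathscr K_\eta$ is chosen precisely as this flux density, so that the hypothesis $\mathscr K_\eta(\xi)=0$ makes the flux vanish and a Bogovskii right inverse of $\divx$ exists on $\bfPsi_\eta(\mathcal A_\kappa)$ with zero trace and support therein. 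I would obtain it by pulling back to the fixed annulus $\mathcal A_\kappa$ through the Hanzawa map $\bfPsi_\eta$, applying the classical Bogovskii operator there (continuous $L^p\to W^{1,p}_0$ on the \emph{fixed} domain), and pushing the result forward by the Piola transform $\mathcal J_\eta$, which preserves both solenoidality and vanishing boundary values; this yields $\mathbf w_1$ with $\divx\mathbf w_1=-\divx\mathbf w_0$, zero trace, support in $S_\alpha$, and $\|\mathbf w_1\|_{W^{1,p}}\lesssim\|\xi\|_{L^p(\Gamma)}$ times factors controlled by $\|\nabx\bfPsi_{\pm\eta}\|$. Setting $\test(\xi):=\mathbf w_0+\mathbf w_1$ then gives a linear operator with solenoidal output on $\mathcal O\cup S_\alpha$, support in $S_\alpha$, and — since $\mathbf w_1$ has vanishing trace — the interface identity $\test(\xi)\circ\bfvarphi_\eta=\bn\xi$.

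The quantitative bounds \eqref{musc1} now follow by summing the estimates for $\mathbf w_0$ and $\mathbf w_1$, noting that the $\eta$-dependent change of variables contributes only multiplicative factors of $\|\nabx\bfPsi_{\pm\eta}\|$, which stay bounded because $\|\eta\|_{L^\infty(I\times\Gamma)}<\alpha<L$ confines $\mathcal O_\eta$ to a fixed collar and $\nabx\bfPsi_\eta$ depends on $\naby\eta$ only multiplicatively — this is why no $L^\infty$-bound on $\naby\eta$ is needed and why the product $\|\xi\naby\eta\|_{L^p(\Gamma)}$ is unavoidable. For the temporal estimate \eqref{musc2}, which is genuinely lower order ($L^p$ in space, no spatial derivative on the left), I would differentiate the whole construction in $t$: the derivative hits $\chi(\cdot-\eta)$, $\xi$ and the Hanzawa/Piola coefficients, producing only $\partial_t\xi$- and $\xi\,\partial_t\eta$-contributions, with $\partial_t\mathbf w_1$ absorbed through the same pullback representation. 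Restricting $\test(\xi-\mathscr K_\eta(\xi))$ to the moving domain and inserting the assumed regularity $\xi\in L^\infty(I;W^{2,2}(\Gamma))\cap W^{1,\infty}(I;L^2(\Gamma))$ then gives $\test(\xi-\mathscr K_\eta(\xi))\in L^\infty(I;L^2(\mathcal O_\eta))\cap L^2(I;W^{1,2}_{\Div}(\mathcal O_\eta))$ with $\partial_t\test(\xi-\mathscr K_\eta(\xi))\in L^2(I;L^2(\mathcal O_\eta))$ and vanishing outside $S_\alpha$; the corresponding bounds on $\mathscr K_\eta(\xi)$ itself are the ones recorded before the proposition.

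I expect the crux to be the Bogovskii corrector on the moving configuration: it must be realised through a fixed reference annulus so that its operator norm is independent of $\eta$, and one must then track how the merely $W^{1,2}$-in-space regularity of $\eta$ propagates through the Hanzawa and Piola change of variables and commutes with $\partial_t$, all while keeping every constant uniform in $\eta$ so that $\test$ can later be applied along approximating sequences $\eta_n$ in the compactness arguments. The borderline failure of $W^{2,2}(\Gamma)\hookrightarrow W^{1,\infty}(\Gamma)$ in two dimensions is exactly what rules out a cruder flatten-and-lift construction with $\nabx\bfPsi_\eta$ uniformly bounded and forces the product-form right-hand sides; verifying that the flux density of the lifting is precisely the weight $\lambda_\eta$ appearing in $\mathscr K_\eta$ is the other place where care is genuinely needed.
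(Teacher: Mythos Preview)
The paper does not prove this proposition at all: it is quoted verbatim from \cite[Prop.~3.3]{MuSc} (see the sentence immediately preceding the statement), so there is no ``paper's own proof'' to compare against. Your sketch is therefore not a reconstruction of something in the present paper but an independent outline of the Muha--Schwarzacher construction.

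That said, your outline is broadly sound and captures the right mechanism: an explicit normal lifting in collar coordinates whose trace is $\bn\xi$ along $\partial\mathcal O_\eta$, followed by a divergence corrector supported in a fixed annular shell, with the corrector's solvability tied precisely to the vanishing of the flux encoded by $\mathscr K_\eta(\xi)=0$. The product terms $\|\xi\naby\eta\|_{L^p}$ and $\|\xi\partial_t\eta\|_{L^p}$ arise exactly where you say they do. The actual construction in \cite{MuSc} is somewhat more explicit --- the weight $\lambda_\eta$ is built into a direct formula so that the extension is solenoidal from the outset and no separate Bogovskii step is needed --- which makes the $\partial_t$-estimate \eqref{musc2} cleaner, since one differentiates a closed-form expression rather than a composition ``pullback $\to$ Bogovskii $\to$ Piola push-forward'' whose time derivative mixes $\partial_t\bfPsi_{\pm\eta}$ into the operator itself. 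Your route would still work, but the bookkeeping for $\partial_t\mathbf w_1$ is heavier than you indicate: the Bogovskii operator on the fixed annulus commutes with $\partial_t$, but the pullback and the Piola push-forward do not, and you must check that the resulting commutators stay at the level of $\|\xi\partial_t\eta\|_{L^p}$ without producing an extra spatial derivative on $\xi$ or $\eta$. This is exactly the ``crux'' you flag in your last paragraph, and it is the one place where your sketch is thinner than a full proof would require.
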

The following result is a consequence of Proposition \ref{prop:musc}. 
\begin{corollary}\label{cor:3.5}
Let the assumptions of Proposition \ref{prop:musc} be satisfied and in addition, let $a,r\in[2,\infty]$, $p,q\in(1,\infty)$ and $s\in[0,1]$, and assume that $\eta\in L^r(I;W^{2,a}(\Gamma))\cap W^{1,r}(I;L^a(\Gamma))$. Let $\xi \in W^{s,p}(\Gamma)$ and let $\xi_\delta$ be a smooth approximation of $\xi$ in $\Gamma$. Then $\mathcal E^\eta_{\delta}(\xi):=\test (\xi_\delta-\mathscr K_\eta(\xi_\delta))$ satisfies all the conclusions in Proposition \ref{prop:musc}. In particular,
\begin{align*}
\Vert \partial_t \mathcal E^{\eta}_\delta(\xi) \Vert_{L^r(I;L^a(\mathcal{O}\cup S_\alpha))}
\lesssim
\Vert (\xi_\delta)\partial_t\eta \Vert_{L^r(I;L^a(\Gamma))}
\end{align*}
and
\begin{align*}
\Vert \mathcal E^{\eta}_\delta(\xi) - \test(\xi-\mathscr K_\eta(\xi))\Vert_{L^p(\mathcal{O}\cup S_\alpha)}
\lesssim
\Vert \xi_\delta - \xi \Vert_{L^p(\Gamma)}
\end{align*}
holds uniformly in $t\in I$.
\end{corollary}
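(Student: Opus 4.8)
The plan is to deduce every assertion of the corollary from Proposition~\ref{prop:musc} applied with the smooth approximation $\xi_\delta$ in the role of $\xi$, combined with the linearity of the operators $\mathscr K_\eta$ and $\test$. The crucial point is that $\xi_\delta$ is a fixed function on the compact manifold $\Gamma$: it is independent of $t$ and lies in $C^\infty(\Gamma)\subset W^{2,2}(\Gamma)$, hence $\xi_\delta\in L^\infty(I;W^{2,2}(\Gamma))$ and $\partial_t\xi_\delta\equiv0$, so also $\xi_\delta\in W^{1,\infty}(I;L^{2}(\Gamma))$. To see that $\xi_\delta-\mathscr K_\eta(\xi_\delta)$ is an admissible argument of $\test$ one only has to note that, $\mathscr K_\eta$ being linear with values in $\R$ and $\mathscr K_\eta(1)=1$ (numerator and denominator in its definition coincide for $\xi\equiv1$), we get $\mathscr K_\eta(\xi_\delta-\mathscr K_\eta(\xi_\delta))=\mathscr K_\eta(\xi_\delta)-\mathscr K_\eta(\xi_\delta)=0$. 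Thus Proposition~\ref{prop:musc} applies to $\xi_\delta$ and shows at once that $\mathcal E^{\eta}_\delta(\xi)=\test(\xi_\delta-\mathscr K_\eta(\xi_\delta))$ satisfies all of its conclusions (the trace identity on $\bm{\varphi}_\eta$, vanishing outside $S_\alpha$, and the listed space memberships), the additional hypothesis $\eta\in L^r(I;W^{2,a}(\Gamma))\cap W^{1,r}(I;L^a(\Gamma))$ entering only in the quantitative bounds below.

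For the first displayed estimate I would insert $\xi_\delta$ into \eqref{musc2} with the exponents $(q,p)=(r,a)$ (for finite $a$; the borderline $a=\infty$, which is not needed in the sequel, can be read off from the construction of $\test$ in \cite{MuSc}). Because $\partial_t\xi_\delta\equiv0$, the first term on the right-hand side of \eqref{musc2} disappears, leaving
\[
\|\partial_t\mathcal E^{\eta}_\delta(\xi)\|_{L^r(I;L^{a}(\mathcal O\cup S_\alpha))}\lesssim\|\xi_\delta\,\partial_t\eta\|_{L^r(I;L^{a}(\Gamma))},
\]
which is exactly the claim; the right-hand side is finite since $\xi_\delta$ is bounded on $\Gamma$ and $\eta\in W^{1,r}(I;L^{a}(\Gamma))$.

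For the second displayed estimate the decisive ingredient is again linearity. Since $W^{s,p}(\Gamma)\hookrightarrow L^{p}(\Gamma)$, the difference $\zeta:=\xi_\delta-\xi$ lies in $L^p(\Gamma)$, and applying $\mathscr K_\eta$ and $\test$ at a fixed time $t$ gives
\[
\mathcal E^{\eta}_\delta(\xi)-\test(\xi-\mathscr K_\eta(\xi))=\test(\zeta-\mathscr K_\eta(\zeta)),
\]
where $\test(\xi-\mathscr K_\eta(\xi))$ is to be read as the $L^p(\mathcal O\cup S_\alpha)$-limit of $\mathcal E^{\eta}_\delta(\xi)$ as $\delta\to0$ (that this limit exists uniformly in $t$ is itself a by-product of the estimate we are proving, applied to two approximation scales). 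It then suffices to use the $L^{p}(\Gamma)\to L^{p}(\mathcal O\cup S_\alpha)$ boundedness of $\zeta\mapsto\test(\zeta-\mathscr K_\eta(\zeta))$, uniform in $t$ as long as $\|\eta\|_{L^\infty(I\times\Gamma)}<\alpha$, together with the corrector bound $|\mathscr K_\eta(\zeta)|\lesssim\|\zeta\|_{L^1(\Gamma)}\lesssim\|\zeta\|_{L^{p}(\Gamma)}$ (valid since $\Gamma$ has finite measure), to obtain
\[
\big\|\mathcal E^{\eta}_\delta(\xi)-\test(\xi-\mathscr K_\eta(\xi))\big\|_{L^{p}(\mathcal O\cup S_\alpha)}\lesssim\|\xi_\delta-\xi\|_{L^{p}(\Gamma)}
\]
uniformly in $t\in I$.

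\textbf{Main obstacle.} Everything above is formal except the plain $L^{p}\to L^{p}$ bound for the solenoidal extension used in the last step: the estimates \eqref{musc1}--\eqref{musc2} recorded before the corollary only control $\test$ in $W^{1,p}$ and, moreover, carry a term $\|\zeta\naby\eta\|_{L^{p}(\Gamma)}$ on the right-hand side, which is useless here because $\zeta=\xi_\delta-\xi$ converges to $0$ only in $W^{s,p}(\Gamma)$ with possibly $s<1$. One therefore has to revisit the explicit construction of $\test$ in \cite[Prop.~3.3]{MuSc} to extract its $L^{p}$-continuity with a constant depending only on $p$ and $\alpha$; the uniformity in $t$ then follows because $\test$ acts spatially at each time through $\eta(t,\cdot)$, which satisfies $\|\eta(t,\cdot)\|_{L^\infty(\Gamma)}<\alpha$ for a.e.\ $t$. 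The remaining ingredients — verifying the hypotheses of Proposition~\ref{prop:musc} for the smooth, time-independent $\xi_\delta$, exploiting $\partial_t\xi_\delta\equiv0$, and the linearity manipulations — are entirely routine.
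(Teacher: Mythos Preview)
The paper does not actually include a proof of this corollary: it is introduced with the single sentence ``The following result is a consequence of Proposition~\ref{prop:musc}'' and stated without further argument. Your proposal is therefore not competing against a written proof but supplying the details the authors omit, and the route you take---apply Proposition~\ref{prop:musc} to the smooth, time-independent $\xi_\delta$, use $\partial_t\xi_\delta=0$ in \eqref{musc2}, and use linearity for the second estimate---is exactly the intended one.

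Your ``Main obstacle'' is well spotted and is a genuine (if minor) gap in what the paper records: the estimates \eqref{musc1}--\eqref{musc2} as stated do not include a plain $L^p(\Gamma)\to L^p(\mathcal O\cup S_\alpha)$ bound for $\test$, and the $W^{1,p}$ estimate \eqref{musc1} carries the extra factor $\|\zeta\naby\eta\|_{L^p(\Gamma)}$ which is of no help for $\zeta=\xi_\delta-\xi$. As you say, one has to go back to the explicit construction in \cite[Prop.~3.3]{MuSc}, where the extension is built by lifting boundary data along the normal direction with a cut-off in $s$; from that formula the $L^p$ continuity (with constant depending only on $\alpha$ and $L$) is immediate. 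So your proof is correct modulo this one appeal to the source, and your diagnosis of where the appeal is needed is accurate.
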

For the final statement of this subsection, borrowed from \cite[Lemma 3.5]{MuSc}, we first introduce the following fractional difference quotient in space in the direction $\be_i$ given by $\Delta_{h}^sf(\by)=h^{-s}(f(\by+\be_ih)-f(\by))$ for some $h>0$. 
Now, we define
\begin{align*}
D^{s,\mathscr{K}}_{-h,h}\eta:=\Delta_{-h}^s\Delta_{h}^s\eta
-\mathscr K_\eta(\Delta_{-h}^s\Delta_{h}^s\eta),
\end{align*}
where $s\in(0,\frac{1}{2})$ and the result is as follows:
\begin{lemma}
\label{lem:higherInt}
Let the assumptions of Proposition \ref{prop:musc} be satisfied and in addition,
let $p, \tilde{a}\in(1,\infty)$ be such that $p'<\tilde{a}\leq \frac{3p'}{3-p'}$ if $p'<3$, and $p'<\tilde{a} <\infty$ otherwise. Furthermore, assume that $\eta\in C^{0,\theta}(\Gamma) \cap W^{1, \frac{\tilde{a}p}{\tilde{a}p-\tilde{a}-p}}(\Gamma)$ and $\bu \in W^{1,p'}(\Oeta)$. Then
\begin{align*}
\bigg\vert
\int_{\Oeta} \bu\cdot  \test(D^{s,\mathscr{K}}_{-h,h}\xi)\dx
\bigg\vert
\leq
\Big( h^{\theta-s} +\Vert\Delta_h^s\eta \Vert_{W^{1, \frac{\tilde{a}p}{\tilde{a}p-\tilde{a}-p}}(\Gamma)}\Big)\Vert \bu \Vert_{W^{1,p'}(\Oeta)}\Vert \xi \Vert_{L^p(\Gamma)}
\end{align*}
and when $\partial_t\xi \in L^p(\Gamma)$,
\begin{align*}
\bigg\vert
\int_{\Oeta} \bu\cdot &\partial_t \test(D^{s,\mathscr{K}}_{-h,h}\xi)\dx
\bigg\vert
\lesssim
\Big( h^{\theta-s} +\Vert\Delta_h^s\eta \Vert_{W^{1, \frac{\tilde{a}p}{\tilde{a}p-\tilde{a}-p}}(\Gamma)}\Big)\Vert \bu \Vert_{W^{1,p'}(\Oeta)}\Vert \partial_t\xi \Vert_{L^p(\Gamma)}
\\&+
\Big( \big\Vert \,\vert\Delta_h^s\xi(t)\vert\, \vert\partial_t\eta \vert\,\big\Vert_{L^{\tilde{a}}(\Gamma)}
+
\big\Vert \Delta_h^s\xi(t) \big\Vert_{L^1(\Gamma)} 
\big\Vert\partial_t\eta \vert\big\Vert_{L^1(\Gamma)}
\Big)\Vert \bu \Vert_{W^{1,p'}(\Oeta)}.
\end{align*}
Here, the constants only depends on $\alpha,L$ and $\Vert\eta\Vert_{C^{0,\theta}(\Gamma)}$.
\end{lemma}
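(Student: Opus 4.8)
\emph{Proof plan.} We follow the argument of \cite[Lemma~3.5]{MuSc} and only describe its structure. Throughout, write $\tau_h$ for the shift $f\mapsto f(\cdot+\be_ih)$ on $\Gamma$, so that $h^{-s}(\tau_hf-f)=\Delta^s_hf$, and set $q_0:=\tfrac{\tilde ap}{\tilde ap-\tilde a-p}$, which satisfies $\tfrac1{q_0}+\tfrac1{\tilde a}=\tfrac1{p'}$. By Corollary~\ref{cor:3.5} and the $L^p$-stability recorded there it suffices to prove both bounds for a smooth $\xi$, the general case following by approximation $\xi_\delta\to\xi$ and passing to the limit. For smooth $\xi$ the function $D^{s,\mathscr{K}}_{-h,h}\xi$ is smooth and, since $\mathscr K_\eta$ is linear and fixes constants, $\mathscr K_\eta\big(D^{s,\mathscr{K}}_{-h,h}\xi\big)=0$; hence Proposition~\ref{prop:musc} applies and $\test\big(D^{s,\mathscr{K}}_{-h,h}\xi\big)$ is a well-defined divergence-free field supported in $S_\alpha$ with trace $\bn\,D^{s,\mathscr{K}}_{-h,h}\xi$ on $\partial\Oeta$. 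Splitting $D^{s,\mathscr{K}}_{-h,h}\xi$ into its second-difference part $\Delta^s_{-h}\Delta^s_h\xi$ and the constant corrector $-\mathscr K_\eta\big(\Delta^s_{-h}\Delta^s_h\xi\big)$ and using that the construction of $\test$ is linear, the corrector contribution is handled directly from the bounds on $\mathscr K_\eta$ after moving the two difference quotients onto the smooth weight defining it; it is of lower order, so below we concentrate on the term involving $\Delta^s_{-h}\Delta^s_h\xi$.

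The key point is that $D^{s,\mathscr{K}}_{-h,h}\xi$ carries two \emph{tangential} fractional difference quotients on $\xi$, whereas $\bu\in W^{1,p'}(\Oeta)$ carries one spatial derivative. Unwinding the construction of $\test$ near $\partial\mathcal O$ — it transports the boundary datum through the Hanzawa coordinates and adds a divergence-correcting (Bogovskii-type) term — and using the discrete integration-by-parts identity $\int_\Gamma f\,\Delta^s_hg\,\dy=-\int_\Gamma(\Delta^s_{-h}f)\,g\,\dy$, one difference quotient is absorbed by $\bu$ via $\|\Delta^s_h\bu\|_{L^{p'}(\Oeta)}\lesssim h^{1-s}\|\bu\|_{W^{1,p'}(\Oeta)}$ — gaining a power $h^{1-s}$ that more than compensates the $h^{-s}$ left from the second difference quotient, which is where $s<\tfrac12$ is used. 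Because neither $\test$, the domain $\Oeta$, nor the corrector $\mathscr K_\eta$ is invariant under the shift $\tau_h$, this produces commutator terms of exactly two types: (i) those carrying the \emph{pointwise} dependence of $\bfPsi_\eta$ on $\eta$ — including the thin regions where $\Oeta$ and $\mathcal O_{\tau_h\eta}$ differ, of measure $\lesssim h^\theta$ — which by $\eta\in C^{0,\theta}(\Gamma)$ and the $h^{-s}$-normalization of $\Delta^s_{-h}$ contribute the factor $h^{\theta-s}$; (ii) those carrying the dependence of $\nabx\bfPsi_\eta$ and $\det\nabx\bfPsi_\eta$ on $\naby\eta$, in which $\tau_h$ falls on $\naby\eta$ and produces $h^{-s}(\tau_h\naby\eta-\naby\eta)=\naby\Delta^s_h\eta$, hence a factor $\|\Delta^s_h\eta\|_{W^{1,q_0}(\Gamma)}$.

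Collecting the main term and the commutators, it remains to bound the corresponding integrals over $\Oeta$ and $\Gamma$ by Hölder's inequality and Proposition~\ref{prop:musc} (in particular \eqref{musc1}). The hypotheses are tuned precisely for this: $p'<\tilde a\le\tfrac{3p'}{3-p'}$ (recall $\Oeta\subset\R^3$) gives $W^{1,p'}(\Oeta)\hookrightarrow L^{\tilde a}(\Oeta)$ together with the corresponding trace embedding into $L^{\tilde a}(\Gamma)$, while $\tfrac1{q_0}+\tfrac1{\tilde a}=\tfrac1{p'}$ and $\eta\in C^{0,\theta}(\Gamma)\cap W^{1,q_0}(\Gamma)$ make the product of the three factors $\big(h^{\theta-s}+\|\Delta^s_h\eta\|_{W^{1,q_0}(\Gamma)}\big)$, $\|\bu\|_{W^{1,p'}(\Oeta)}$ and $\|\xi\|_{L^p(\Gamma)}$ close — importantly, no regularity of $\xi$ beyond $L^p$ is consumed, the required smoothness being supplied by $\bu$ and the corrector. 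This yields the first inequality.

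For the second inequality one repeats the argument with $\partial_t\test(\cdot)$ in place of $\test(\cdot)$, using \eqref{musc2} and the bound $\|\partial_t\mathscr K_\eta(\zeta)\|_{L^q(I)}\lesssim\|\partial_t\zeta\|_{L^q(I;L^1(\Gamma))}+\|\zeta\,\partial_t\eta\|_{L^q(I;L^1(\Gamma))}$. The time derivative then falls on: the factor $\xi$, which reproduces the first line of the claimed bound with $\partial_t\xi$ in place of $\xi$; the $\eta$-dependence of $\test$, where \eqref{musc2} produces the term $\big\||\Delta^s_h\xi|\,|\partial_t\eta|\big\|_{L^{\tilde a}(\Gamma)}\|\bu\|_{W^{1,p'}(\Oeta)}$; and the corrector $\mathscr K_\eta$, which produces $\|\Delta^s_h\xi\|_{L^1(\Gamma)}\|\partial_t\eta\|_{L^1(\Gamma)}\|\bu\|_{W^{1,p'}(\Oeta)}$. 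All constants depend only on $\alpha$, $L$ and $\|\eta\|_{C^{0,\theta}(\Gamma)}$. The main obstacle is the commutator bookkeeping in the second step together with the verification in the third step that the Sobolev/trace exponent arithmetic closes — delicate exactly because $\Gamma$ is two-dimensional, so that $W^{2,2}(\Gamma)\not\hookrightarrow W^{1,\infty}(\Gamma)$; the somewhat unusual exponent $\tfrac{\tilde ap}{\tilde ap-\tilde a-p}$ and the restrictions on $\tilde a$ and on $\eta$ are engineered for exactly this.
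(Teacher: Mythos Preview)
The paper does not prove this lemma at all: it is introduced as ``borrowed from \cite[Lemma 3.5]{MuSc}'' and simply stated without proof. Your proposal is an outline of precisely that cited argument, so it is consistent with the paper's treatment; there is nothing to compare beyond noting that you have supplied a sketch where the paper gives only a reference.
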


\subsection{Weak martingale solutions}
\label{sec:weak}
We are interested in a solution to \eqref{contEq}--\eqref{shellEq} that is weak in the probabilistic sense and also weak in the deterministic sense. From the probabilistic point of view, this means that the stochastic basis is also an unknown of the system and from the deterministic angle, we want a distributional solution of the system integrated against a deterministic test function pair $(\phi,\bm{\phi})\in W^{2,2}(\Gamma )\times W^{1,2}_{\Div}(\mathcal O)$ that satisfies $ \bm{\phi} \circ\bm{\varphi}= \phi\bn$ at the fluid-structure interface $\Gamma$.

We are now deriving the weak formulation of the coupled system assuming we have a sufficiently regular solution at hand.
Since the momentum equation \eqref{momEq} is merely a random PDE rather than a SPDE,  and advected by the large-scale incompressible vector field, we can directly apply Reynolds transport theorem \cite{harouna2017stochastic} to obtain
for $(\iota_\eta\phi,\mathcal J_{\eta(t)} \bm{\phi})$ (recalling the definitions from Section \ref{sec:geometry})
\begin{align*}
\dd  \int_{\mathcal{O}_{\eta}}\bu  \cdot \mathcal J_{\eta(t)} \bm{\phi}\dx
&
=
\int_{\mathcal{O}_{\eta}} \partial_t\bu  \cdot \mathcal J_{\eta(t)} \bm{\phi} \dx\dt
+
\int_{\mathcal{O}_{\eta}} \bu  \cdot  \partial_t(\mathcal J_{\eta(t)} \bm{\phi}) \dx\dt
\\&+
\int_{\mathcal{O}_{\eta}}(\bu\cdot\nabx)(\bu  \cdot \mathcal J_{\eta(t)} \bm{\phi})\dx\dt.
\end{align*}
%using that $\bfu\circ\bfvarphi_\eta=\partial_t\eta\bn$ on $\Gamma$.
We can now use the momentum equation \eqref{momEq} and the divergence-free condition on $\bm{\phi}$ (which transfers to $\mathcal J_{\eta(t)} \bm{\phi}$) to obtain
\begin{align*}
 \int_{\mathcal{O}_{\eta}} \partial_t\bu  \cdot \mathcal J_{\eta(t)} \bm{\phi} \dx\dt
&=
-\int_{\mathcal{O}_{\eta}} ((\bu\cdot\nabx)\bu) \cdot \mathcal J_{\eta(t)} \bm{\phi} \dx\dt
-
\int_{\mathcal{O}_{\eta}} \nabx\bu:\nabx \mathcal J_{\eta(t)} \bm{\phi} \dx\dt
\\&
%+
%\int_{\mathcal{O}_{\eta}}  \pi\divx( \mathcal J_{\eta(t)} \bm{\phi} )\dx\dt
+
\int_{\mathcal{O}_{\eta}}   \divx(\mathbb{T}(\bu,\pi) \mathcal J_{\eta(t)} \bm{\phi} )\dx\dt
\end{align*}
with the latter satisfying
\begin{align*}
\int_{\mathcal{O}_{\eta}}   \divx(\mathbb{T}(\bu,\pi) \mathcal J_{\eta(t)} \bm{\phi} )\dx\dt
&=
\int_{\partial\mathcal{O}_{\eta}}   \bn_\eta\cdot(\mathbb{T}(\bu,\pi) \mathcal J_{\eta(t)} \bm{\phi} )\dd\mathcal{H}^2\dt
%\\
%&=
%\int_{\partial\mathcal{O}_{\eta}}   \bn_\eta\cdot(\mathbb{T}(\bu,\pi) \circ\bm{\varphi}_\eta\circ\bm{\varphi}_\eta^{-1}(\mathcal J_{\eta(t)} \bm{\phi}) \circ\bm{\varphi}_\eta\circ\bm{\varphi}_\eta^{-1})\dd\mathcal{H}^2\dt
%\\
%&=
%\int_{\Gamma}   \bn_\eta\circ\bm{\varphi}_\eta \cdot(\mathbb{T}(\bu,\pi) \circ\bm{\varphi}_\eta (\mathcal J_{\eta(t)} \bm{\phi}) \circ\bm{\varphi}_\eta )\vert\mathrm{det}(\nabx\bm{\varphi}_\eta)\vert\dy\dt
%\\
%&=
%\int_{\Gamma}   \bn_\eta\circ\bm{\varphi}_\eta \cdot(\mathbb{T}(\bu,\pi) \circ\bm{\varphi}_\eta \phi \bn )\,\vert\mathrm{det}(\nabx\bm{\varphi}_\eta)\vert\dy\dt
\\
&=
\int_{\Gamma}   g_\eta\phi  \dy\dt.
\end{align*}
To obtain a distributional formulation for the shell equation \eqref{shellEq}, we first transform it into the It\^o equation
\begin{align*}
\dd \partial_t\eta +\big[\Dely^2\eta +g_\eta
-
\tfrac{1}{2} 
((\bsigma\cdot\naby)(\bsigma\cdot\naby)\partial_t \eta )
\big]\dt 
+
((\bsigma\cdot\naby)\partial_t \eta ) 
\dd B_t=0,
\label{shellEqIto}
\end{align*}
cf. the discussion in Section \ref{sec:strat}.
If we now use It\^o's formula, we obtain
\begin{align*}
\dd \int_{\Gamma}   \partial_t\eta \, \iota_\eta\phi\dy
&=
-
\int_{\Gamma}  \iota_\eta\phi \bigg[\Dely^2\eta +g_\eta
-
\tfrac{1}{2}
((\bsigma\cdot\naby)(\bsigma\cdot\naby)\partial_t \eta )
\bigg]
\dy\dt
\\&
\quad+\int_{\Gamma}   \partial_t\eta \, \partial_t(\iota_\eta\phi)\dy+\int_{\Gamma}  
( (\bsigma\cdot\naby)\partial_t \eta ) 
\iota_\eta\phi \dy\dd B_t,
\end{align*} 
where due to the periodicity of the boundary of $\Gamma$,
\begin{align*}
\int_{\Gamma}  \iota_\eta\phi \Dely^2\eta \dy\dt
=
\int_{\Gamma} \Dely \iota_\eta\phi \Dely\eta \dy\dt.
\end{align*}
If we now use the identity $(\bv_1\cdot\nabx)(\bv_2\cdot\bv_3)-((\bv_1\cdot\nabx)\bv_2)\cdot\bv_3
=((\bv_1\cdot\nabx)\bv_3)\cdot\bv_2$, it follows that
\begin{equation}
\begin{aligned}
\label{weaklimit'}
\dd  \bigg(&\int_{\mathcal{O}_{\eta}}\bu  \cdot \mathcal J_{\eta(t)} \bm{\phi}\dx
+
 \int_{\Gamma}   \partial_t\eta \, \iota_\eta\phi\dy
 \bigg)
=
\int_{\Gamma}   \big(\partial_t\eta \, \partial_t(\iota_\eta)\phi
-
 \Dely \iota_\eta\phi \Dely\eta
 \big)\dy\dt
\\&
+ 
\int_{\mathcal{O}_{\eta}} \bigg(\bu  \cdot  \partial_t(\mathcal J_{\eta(t)}\bm{\phi})
+
 ((\bu\cdot\nabx )\mathcal J_{\eta(t)} \bm{\phi}) \cdot \bu
-
 \nabx\bu:\nabx \mathcal J_{\eta(t)} \bm{\phi}
\bigg) \dx\dt 
 \\&
 +
 \frac{1}{2}
\int_{\Gamma} \partial_t\eta\,
((\bsigma\cdot\naby)(\bsigma\cdot\naby)(\iota_\eta\phi))
\dy\dt 
- 
\int_{\Gamma}  
\partial_t\eta\,( (\bsigma\cdot\naby)
\iota_\eta\phi) \dy\dd B_t.
\end{aligned}
\end{equation} 
Note that $\divx( \mathcal J_{\eta(t)} \bm{\phi} )=0$ and thus, no pressure term appears in the weak formulation. 
The term containing $\partial_t \mathcal J_{\eta}$ is still not well-defined and needs to be rewritten. First of all, we have
\begin{align*}
\mathcal{J}_{\eta}\bfphi
&=
\nabx  \bm{\Psi}_{\eta}\circ \bm{\Psi}_{\eta}^{-1}(\mathrm{det}\nabx  \bm{\Psi}_{\eta}\circ \bm{\Psi}_{\eta}^{-1})^{-1}
\bfphi\circ \bm{\Psi}_{\eta}^{-1}\\
&=\nabx  \bm{\Psi}_{\eta}^{-1}(\mathrm{det}\nabx  \bm{\Psi}_{\eta}^{-1})^{-1}
\bfphi\circ \bm{\Psi}_{\eta}^{-1}\\
&=\nabx  \bm{\Psi}_{-\eta}(\mathrm{det}\nabx  \bm{\Psi}_{-\eta})^{-1}
\bfphi\circ \bm{\Psi}_{-\eta}
\end{align*}
so that
\begin{align*}
\partial_t(\mathcal{J}_{\eta}\bfphi)
&=\partial_t\nabx  \bm{\Psi}_{-\eta}(\mathrm{det}\nabx  \bm{\Psi}_{-\eta})^{-1}
\bfphi\circ \bm{\Psi}_{-\eta}
\\
&-\nabx  \bm{\Psi}_{-\eta}(\mathrm{det}\nabx  \bm{\Psi}_{-\eta})^{-2}\mathrm{tr}((\mathrm{cof}\nabx  \bm{\Psi}_{-\eta})^\top\partial_t\nabx  \bm{\Psi}_{-\eta})
\bfphi\circ \bm{\Psi}_{-\eta}\\
&+\nabx  \bm{\Psi}_{-\eta}(\mathrm{det}\nabx  \bm{\Psi}_{-\eta})^{-1}
\nabx\bfphi\circ \bm{\Psi}_{-\eta}\partial_t \bm{\Psi}_{-\eta}.
\end{align*}
By using Gau\ss\, theorem, we obtain
\begin{align*}
\int_{\mathcal O_\eta}\bfu\cdot\partial_t\nabx  \bm{\Psi}_{-\eta}&(\mathrm{det}\nabx  \bm{\Psi}_{-\eta})^{-1}
\bfphi\circ \bm{\Psi}_{-\eta}\dx
\\&
=\int_{\partial\mathcal O_\eta}((\bfu\cdot\partial_t \bm{\Psi}_{-\eta})(\mathrm{det}\nabx  \bm{\Psi}_{-\eta})^{-1}
\bfphi\circ \bm{\Psi}_{-\eta}) \cdot\bn_\eta\,\dd\mathcal H^2
\\
&-\int_{\mathcal O_\eta}\partial_t \bm{\Psi}_{-\eta}\cdot\Div\big(
\bfu\otimes(\mathrm{det}\nabx  \bm{\Psi}_{-\eta})^{-1}\bfphi\circ \bm{\Psi}_{-\eta}\big)\dx
\end{align*}
and similarly
\begin{align*}
\int_{\mathcal O_\eta}&\bfu\cdot\nabx  \bm{\Psi}_{-\eta}(\mathrm{det}\nabx  \bm{\Psi}_{-\eta})^{-2}\mathrm{tr}((\mathrm{cof}\nabx  \bm{\Psi}_{-\eta})^\top\partial_t\nabx  \bm{\Psi}_{-\eta})
\bfphi\circ \bm{\Psi}_{-\eta}\dx
\\&=
\int_{\mathcal O_\eta}\sum_{j=1}^3\sum_{i=1}^3((\mathrm{cof}\partial_j \Psi_{-\eta}^i) \partial_t\partial_j  \Psi_{-\eta}^i)\bfu\cdot\nabx  \bm{\Psi}_{-\eta} (\mathrm{det}\nabx  \bm{\Psi}_{-\eta})^{-2}
\bfphi\circ \bm{\Psi}_{-\eta}\dx
\\&=
\int_{\mathcal O_\eta}
\sum_{j=1}^3\partial_j\Big(\sum_{i=1}^3((\mathrm{cof}\partial_j \Psi_{-\eta}^i)\partial_t  \Psi_{-\eta}^i)\bfu\cdot\nabx  \bm{\Psi}_{-\eta} (\mathrm{det}\nabx  \bm{\Psi}_{-\eta})^{-2}
\bfphi\circ \bm{\Psi}_{-\eta}\Big)\dx
\\
&-
\int_{\mathcal O_\eta} \sum_{j=1}^3\sum_{i=1}^3\partial_t  \Psi_{-\eta}^i \partial_j\Big( (\mathrm{cof}\partial_j  \Psi_{-\eta}^i) \bfu\cdot(\nabx  \bm{\Psi}_{-\eta} (\mathrm{det}\nabx  \bm{\Psi}_{-\eta})^{-2}
\bfphi\circ \bm{\Psi}_{-\eta})\Big)\dx\\
&=
\int_{\partial\mathcal O_\eta}
\sum_{j=1}^3\Big(\sum_{i=1}^3((\mathrm{cof}\partial_j \Psi_{-\eta}^i)\partial_t  \Psi_{-\eta}^i)\bfu\cdot\nabx  \bm{\Psi}_{-\eta} (\mathrm{det}\nabx  \bm{\Psi}_{-\eta})^{-2}
\bfphi\circ \bm{\Psi}_{-\eta}\Big)\,n^j_{\eta}\,\dd\mathcal H^2
\\
&-
\int_{\mathcal O_\eta} \sum_{j=1}^3\sum_{i=1}^3\partial_t  \Psi_{-\eta}^i \partial_j\Big( (\mathrm{cof}\partial_j  \Psi_{-\eta}^i) \bfu\cdot(\nabx  \bm{\Psi}_{-\eta} (\mathrm{det}\nabx  \bm{\Psi}_{-\eta})^{-2}
\bfphi\circ \bm{\Psi}_{-\eta})\Big)\dx
\end{align*}
where $ \Psi_{-\eta}^i$ is the $i$-th component of $\bm{\Psi}_{-\eta} $ and $n^j_\eta$ that of $\bn_\eta$. The last term of $\partial_t(\mathcal{J}_{\eta}\bfphi)$ does not require such an integration by parts.
Combining H\"older's inequality with Sobolev's embedding and using that $\bfPsi_\eta$ has the same regularity as $\eta$, one easily checks that for a weak solution with regularity as below, all terms are well-defined. 

 With this preparation, we now give the precise notion of a solution.
\begin{definition}[Weak martingale solution]
\label{def:strongSolutionAlone}
Let $(\eta_0, \eta_1, \bu_0,\bsigma)$ be a dataset such that
\begin{equation}
\begin{aligned}
\label{datasetAlone}
&
\eta_0 \in W^{2,2}(\Gamma ) \text{ with } \Vert \eta_0 \Vert_{L^\infty( \Gamma )} < L, \quad
\eta_1 \in L^{2}(\Gamma ), 
\\
&\bu_0\in L^{2}_{\mathrm{\divx}}(\mathcal{O}_{\eta_0} ) \text{ is such that }
 \bu_0 \circ\bm{\varphi}_{\eta_0}   =\eta_1 \bn \text{ on } \Omega\times \Gamma,
\\&
\Vert\bsigma\Vert_{W^{1,\infty}(\Gamma)}\lesssim 1.
\end{aligned}
\end{equation} 
We call 
$( (\Omega,\mathfrak{F},(\mathfrak{F})_{t\geq0},\mathbb{P}), \eta, \bu )$
a \textit{weak martingale solution}  of \eqref{contEq}--\eqref{shellEq} with data $(\eta_0, \eta_1, \bu_0,\bsigma)$ provided that the following holds:
\begin{itemize}
\item[(a)] $(\Omega,\mathfrak{F},(\mathfrak{F})_{t\geq0},\mathbb{P})$ is a stochastic basis with a complete right-continuous filtration;
\item[(b)] $B_t$ is an $(\mathfrak{F}_t)$-Brownian motion;
\item[(c)]  the shell function $\eta$ is $(\mathfrak{F}_t)$-adapted with  $
\Vert \eta \Vert_{L^\infty(I \times \Gamma )} <L$ a.s. and and for all $s\in(0,1/2)$
\begin{align*}
\eta \in L^{\infty}\big(I;W^{2,2}(\Gamma ) \cap L^2(I;W^{2+s,2}(\Gamma)) \big), \qquad \partial_t\eta \in  C_w\big(\overline{I};L^{2}(\Gamma )  \big)\qquad a.s.;
\end{align*}
\item[(d)] the velocity $\bu$ is $(\mathfrak{F}_t)$-adapted with $\bu\circ\bm{\varphi}_\eta =\bn\partial_t\eta$ on $I\times \Gamma $ a.s. 
\begin{align*}
\bu\in  C_w \big(\overline{I}; L^2_{\divx}(\mathcal{O}_{\eta} ) \big)\cap L^2\big(I;W^{1,2}(\Oeta)  \big) \qquad a.s.;
\end{align*}
\item[(e)] equation \eqref{weaklimit'} holds
a.s. for all $(\phi,\bm{\phi})\in W^{2,2}(\Gamma )\times W^{1,2}_{\Div}(\mathcal O)$ with $\bm{\phi} \circ\bm{\varphi}= \phi\bn$ on $\Gamma$.
\item[(e)] The energy inequality holds in the sense that
\begin{align}
\tfrac{1}{2}\int_{\mathcal O_{\eta(t)}}|\bu(t)|^2\dx&+\int_0^t\int_{\mathcal O_{\eta(s)}}|\nabx\bu |^2\dx\ds+\tfrac{1}{2}\int_\Gamma|\partial_t\eta(t)|^2\dy
+
\tfrac{1}{2}\int_\Gamma|\Dely\eta(t)|^2\dy
\nonumber
\\
&\leq \tfrac{1}{2}\int_{\mathcal O_{\eta_0}}|\bu_0|^2\dx+\tfrac{1}{2}\int_\Gamma|\eta_1|^2\dy
+
\tfrac{1}{2}\int_\Gamma|\Dely\eta_0|^2\dy
\label{eq:en}
\end{align}
a.s. for a.a. $t\in I$.
\end{itemize}
\end{definition}

The following is our main result.

\begin{theorem}\label{thm:main}
Let $(\eta_0, \eta_1, \bu_0,\bsigma)$ be a dataset such that
\eqref{datasetAlone} holds. Then there is a weak martingale solution of \eqref{contEq}--\eqref{shellEq} 
with data $(\eta_0, \eta_1, \bu_0,\bsigma)$ in the sense of Definition \ref{def:strongSolutionAlone}.
The interval of existence is of the form $\overline{I}=(0,t)$, where $t<T$
 only if $\lim_{s\rightarrow t}\|\eta(s)\|_{L^\infty(\Gamma)}=L$
a.s. in $\Omega_0$ for some $\Omega_0\subset\Omega$ with $\mathbb P(\Omega_0)>0$.
\end{theorem}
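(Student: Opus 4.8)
The plan is to prove Theorem~\ref{thm:main} via a three-layer approximation scheme, combining a Galerkin discretisation, a fixed-point argument to recover the nonlinearity, and a vanishing-regularisation limit, with stochastic compactness (in the spirit of Jakubowski--Skorokhod) applied at every stage. First, I would introduce a doubly-regularised system: add an artificial fourth-order dissipation $\varepsilon\Delta_y^2\partial_t\eta$ (or an $\varepsilon$-multiple of a higher-order operator) to the shell equation~\eqref{shellEq} in order to compensate for the loss of spatial regularity coming from the Piola transform, and linearise the fluid convective term and the geometry (freezing $\eta$ in $\mathcal O_\eta$, in the coupling map $\bm\varphi_\eta$ and in the Piola/Hanzawa transforms) so that one is left with a linear coupled problem on a \emph{fixed} moving domain. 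On this linearised, regularised level I would run a Galerkin scheme in the spatial variables, using a basis adapted to the divergence-free constraint and the interface coupling condition $\bm\phi\circ\bm\varphi=\phi\bn$; the key here is that the Stratonovich transport term, once converted to It\^o form as in Section~\ref{sec:strat}, produces an It\^o correction $-\tfrac12(\bsigma\cdot\nabla_y)(\bsigma\cdot\nabla_y)\partial_t\eta$ which, together with the diffusion term, yields exactly an energy \emph{equality} (the transport noise is conservative), giving uniform-in-$N$ bounds on $\bu$ in $L^2_\omega L^\infty_t L^2_x\cap L^2_\omega L^2_t W^{1,2}_x$ and on $(\eta,\partial_t\eta)$ in the natural spaces plus the $\varepsilon$-enhanced bound on $\partial_t\eta$ in $L^2_\omega L^2_t W^{2,2}_y$.

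Next, to pass $N\to\infty$ I would establish tightness of the laws of $(\bu_N,\eta_N,\partial_t\eta_N, B)$ on suitable path spaces: for $\eta_N$ and $\partial_t\eta_N$ the $\varepsilon$-regularity plus fractional-in-time estimates (obtained from the equation and the martingale part via Burkholder--Davis--Gundy) give compactness in $C_t L^2_y$-type spaces, while for $\bu_N$, which lives on a moving domain, one cannot apply Aubin--Lions directly---this is where I would reinterpret the deterministic compactness Theorem~\ref{thm:auba} of \cite{MuSc} as a tightness statement, using the solenoidal extension operator $\test$ from Proposition~\ref{prop:musc} and the difference-quotient estimates of Lemma~\ref{lem:higherInt} to verify the approximability and equi-continuity hypotheses pathwise, with the random ingredients ($A_n$ etc.) controlled in expectation. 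Jakubowski's version of the Skorokhod theorem then gives a new probability space carrying a.s.-convergent copies, on which one identifies the limit as a martingale solution of the linearised regularised system; a density argument (see \cite[p.~237]{lengeler2014weak}) transfers the weak formulation~\eqref{weaklimit'} from Galerkin test-functions to all admissible $(\phi,\bm\phi)$. The fixed-point step in Section~\ref{sec:regul} then removes the linearisation: one sets up a map $\eta\mapsto$(solution of the linear problem with that $\eta$ in the geometry) on a ball in a space where the above a priori bounds live, checks that it is well-defined and compact/continuous (again via the same stochastic compactness machinery, now using the H\"older-in-time control of $\eta$ to handle the geometry's dependence on the argument), and invokes a Schauder-type fixed-point theorem---taking care that the solution map is really single-valued only in law, so one works with the Banach--Nečas--Babuška/Gyöngy--Krylov-style argument or, more robustly, with a fixed point on the level of laws.

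Finally, in Section~\ref{sec:limit} I would let the regularisation parameter $\varepsilon\to0$. The energy equality is \emph{uniform in $\varepsilon$} (the $\varepsilon$-terms are dissipative), so the $\bu$ and $(\eta,\partial_t\eta)$ bounds survive; the crucial new point is the $\varepsilon$-independent estimate for $\eta$ in $L^2_\omega L^2_t W^{2+s,2}_y$ for all $s\in(0,1/2)$, which I would obtain exactly as advertised in the introduction and in Section~\ref{sec:highreg}: test the shell equation with the fractional difference quotient $D^{s,\mathscr K}_{-h,h}\eta$ (equivalently, apply It\^o's formula to $\tfrac12\|\Delta_h^s\eta\|_{W^{1,2}_y}^2$ or to the relevant difference-quotient functional), use Lemma~\ref{lem:higherInt} to absorb the fluid-stress term $g_\eta$, and observe that the transport-noise contributions---a drift term $\tfrac12\int(\bsigma\cdot\nabla_y)(\bsigma\cdot\nabla_y)\partial_t\eta\cdot(\dots)$ and the It\^o integral---have, thanks to $\bsigma\in W^{1,\infty}$ and the cancellation $(\bsigma\cdot\nabla_y)\xi=\divy(\bsigma\xi)$, \emph{precisely} enough regularity to be absorbed after taking expectations; unlike~\eqref{eq:en} this bound only holds in $\mathbb E$. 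With this extra regularity in hand, $\nabla_x\bm\Psi_\eta$ becomes bounded (via $W^{2+s,2}(\Gamma)\hookrightarrow W^{1,\infty}(\Gamma)$ in 2D), all terms in~\eqref{weaklimit'} --- in particular the awkward $\partial_t\mathcal J_\eta$ term rewritten by the Gauss-theorem manipulation above, the convective term (handled by the reinterpreted Theorem~\ref{thm:auba} once more), and the Stratonovich term --- are meaningful and pass to the limit; lower semicontinuity yields the energy inequality~\eqref{eq:en}. The blow-up alternative ($t<T$ only if $\|\eta(s)\|_{L^\infty}\to L$) follows because as long as $\|\eta\|_{L^\infty}$ stays away from $L$ the geometry is non-degenerate, all constants in the extension/compactness results remain finite, and the pathwise energy control lets one continue the solution; the only obstruction to global existence is the self-intersection of the domain. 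I expect the \textbf{main obstacle} to be the high-regularity estimate for $\eta$ with constants independent of $\varepsilon$ and compatible with the transport noise: one must carefully track that the fractional-difference-quotient testing of the noise terms closes (the noise is borderline-critical for this estimate), and simultaneously ensure the constants depend on $\eta$ only through $\|\eta\|_{C^{0,\theta}}$ and the distance of $\|\eta\|_{L^\infty}$ from $L$, so that the estimate is stable along all three approximation limits.
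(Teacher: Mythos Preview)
Your proposal is correct and follows essentially the same three-layer scheme as the paper (Galerkin $\to$ fixed-point $\to$ $\varepsilon\to0$, with Jakubowski--Skorokhod compactness at each stage via the reinterpreted Theorem~\ref{thm:auba}, and the $W^{2+s,2}$-estimate obtained by testing the coupled weak formulation with the pair $(D^{s,\mathscr K}_{-h,h}\eta,\mathscr F^{\eta}(D^{s,\mathscr K}_{-h,h}\eta))$). The one place where you hedge---whether the fixed-point map is single-valued---is resolved in the paper along the first route you suggest: the linearised regularised problem satisfies an energy \emph{equality} (Proposition~\ref{prop:unique}), which gives pathwise uniqueness, and the Gy\"ongy--Krylov argument then upgrades the Galerkin limit to a probabilistically strong solution, so the fixed-point map $(\zeta,\bv)\mapsto(\eta,\bu)$ is genuinely single-valued on a fixed stochastic basis and a Schauder-type argument applies directly.
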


\section{The linearised problem}
\label{sec:linear}
\noindent In the first instant, we wish to construct a weak solution to a system with a regularized geometry and a regularized convection term. Here, by a  \textit{regularized geometry}, we mean a regularization  of a solution to a \textit{given} shell equation and not the solution to our anticipated shell equation \eqref{shellEq}. Thus, we aim at solving the system
%\todo{please add a term $\epsilon \mathcal '(\eta)$ in the shell equation giving $\eta\in L^\infty(W^{3,2})$}
\begin{align}
\divx \bu=0, 
\label{contEqeps}\\
 \partial_t \bu  + (\bu_\epsilon\cdot \nabx)\bu
= 
\Delx \bu -\nabx\pi,
\label{momEqeps}
\\
\dd \partial_t\eta +(\epsilon\mathcal{L}'(\eta) +\epsilon\partial_t\Dely^2\eta +\Dely^2\eta +g_{\eta_\epsilon})\dt 
+
((\bsigma\cdot\naby)\partial_t \eta )\circ
\dd B_t=0,
\label{shellEqeps}
\end{align}
in $I\times \mathcal{O}_{\eta_\epsilon}$ where
\begin{align*}
&g_{\eta_\epsilon}=\bn^\top(\mathbb{T}(\bu, \pi )\bn_{\eta_\epsilon}  )\circ\bm{\varphi}_{\eta_\epsilon} \vert
\det(\naby\bm{\varphi}_{\eta_\epsilon})\vert,\quad \mathbb{T}(\bu, \pi )= (\nabx\bu+\nabx\bu^\top)-\pi\mathbb{I}_{3\times3},
\end{align*}
$\mathcal{L}'$ is the operator given $\int_\Gamma\mathcal{L}'(\eta)\phi \dy = \int_\Gamma \naby^3\eta:\naby^3\phi \dy$ for all $\phi\in W^{3,2}(\Gamma)$, and $\epsilon>0$ is a fixed regularisation parameter. With some slight abuse of notation we denote by $f_\epsilon$ the regularisation of a function on the fluid domain (which is previously extended by zero to the whole space) as well as the regularization of a function defined on $I\times \Gamma$. 
The regularisation is taken with respect to space and time, where the temporal regularization is taken backwards in extending functions to $(-\infty,T)$ by their values at time 0.
A martingale solution to \eqref{contEqeps}--\eqref{shellEqeps} can be defined analogously to Definition \ref{def:strongSolutionAlone}. We aim to show the following result (the proof of Theorem \ref{thm:main'} can be found in the next section).
\begin{theorem}\label{thm:main'}
Let $(\eta_0, \eta_1, \bu_0,\bsigma)$ be a dataset such that
\eqref{datasetAlone} holds and we have additionally $\eta_0\in W^{3,2}(\Gamma)$. 
Then there is a weak martingale solution of \eqref{contEqeps}--\eqref{shellEqeps} 
with data $(\eta_0, \eta_1, \bu_0,\bsigma)$. The interval of existence is of the form $\overline{I}=(0,t)$, 
where $t<T$ only if $\lim_{s\rightarrow t}\|\eta(s)\|_{L^\infty(\Gamma)}=L$
a.s. in $\Omega_0$ for some $\Omega_0\subset\Omega$ with $\mathbb P(\Omega_0)>0$.
\end{theorem}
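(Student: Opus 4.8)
The plan is to construct a weak martingale solution to the regularised, linearised system \eqref{contEqeps}--\eqref{shellEqeps} by a three-layer approximation: (i) a finite-dimensional Galerkin scheme for the coupled velocity--displacement system on the \emph{fixed} regularised geometry $\mathcal O_{\eta_\epsilon}$, (ii) stochastic compactness via Skorokhod/Jakubowski to pass to the limit in the Galerkin parameter, and (iii) identification of the limit as a martingale solution. Because the geometry here is prescribed by the \emph{given} function $\eta_\epsilon$ (a fixed, smooth-in-space-and-time regularisation), the moving domain is no longer coupled to the unknown, so the Piola and Hanzawa transforms $\mathcal J_{\eta_\epsilon(t)}$, $\bm\Psi_{\eta_\epsilon(t)}$ are all smooth and invertible with all derivatives controlled; the genuine fluid--structure coupling survives only through the interface condition $\bu\circ\bm\varphi_{\eta_\epsilon}=\bn\partial_t\eta$ in the \emph{trace} sense and through $g_{\eta_\epsilon}$. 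First I would fix an $(\mathfrak F_t)$-adapted basis carrying the Brownian motion $(B_t)$, choose a basis $\{\bm\phi_k\}$ of $W^{1,2}_{\Div}(\mathcal O_{\eta_\epsilon(t)})$-type functions paired with $\{\phi_k\}\subset W^{3,2}(\Gamma)$ satisfying $\bm\phi_k\circ\bm\varphi_{\eta_\epsilon}=\phi_k\bn$, and project the system \eqref{weaklimit'}-type formulation (now with the extra terms $\epsilon\mathcal L'(\eta)$ and $\epsilon\partial_t\Dely^2\eta$) onto $\mathrm{span}\{(\phi_k,\bm\phi_k):k\le N\}$. Since the convective term is regularised ($\bu_\epsilon\cdot\nabx\bu$ with $\bu_\epsilon$ a mollification, and the displacement equation is linear apart from the given geometry), the resulting finite system is an SDE with \emph{globally Lipschitz-on-bounded-sets}, linearly growing coefficients in the Galerkin variables, so it admits a unique strong solution up to the stopping time $\mathfrak t_N=\inf\{t:\|\eta_N(t)\|_{L^\infty(\Gamma)}\ge L\}\wedge T$.

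Next I would derive the a priori energy estimate. Testing the Galerkin momentum equation with $\bu_N$ and the shell equation with $\partial_t\eta_N$ and using the interface condition to cancel the fluid stress term against $g_{\eta_\epsilon}$ — exactly as in the derivation of \eqref{eq:enA}, but now keeping track of the geometry weights $\iota_{\eta_\epsilon}$, $\mathcal J_{\eta_\epsilon}$, which contribute only bounded lower-order terms handled by Gronwall — I obtain the It\^o energy identity. Crucially, because the transport term $(\bsigma\cdot\naby)\partial_t\eta$ is skew-symmetric after the It\^o--Stratonovich conversion (the drift $-\tfrac12(\bsigma\cdot\naby)(\bsigma\cdot\naby)\partial_t\eta$ tested against $\partial_t\eta$ exactly cancels half the It\^o correction from the martingale, using $\divy\bsigma=0$), the noise is energy-conservative and the stochastic integral is a true martingale with quadratic variation controlled by the energy itself; a Burkholder--Davis--Gundy plus Gronwall argument then yields $\mathbb E\sup_{t\le\mathfrak t_N}[\,\|\bu_N\|_{L^2(\mathcal O_{\eta_\epsilon})}^2+\|\partial_t\eta_N\|_{L^2(\Gamma)}^2+\|\Dely\eta_N\|_{L^2(\Gamma)}^2\,]+\mathbb E\int_0^{\mathfrak t_N}[\,\|\nabx\bu_N\|_{L^2}^2+\epsilon\|\naby^3\eta_N\|_{L^2}^2+\epsilon\|\partial_t\Dely^2\eta_N\|_{\text{appropriate}}^2\,]\le C(\text{data},\epsilon)$, uniform in $N$. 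The $\epsilon$-terms give the extra spatial regularity $\eta_N\in L^2(I;W^{3,2})$ and temporal regularity needed below; they also let me show $\mathfrak t_N\to T$ (or detect the degeneration $\|\eta\|_{L^\infty}\to L$) as stated in the theorem. From these bounds I extract uniform estimates for the time-increments of $\bu_N$, $\partial_t\eta_N$ in negative Sobolev norms (reading off the equation), giving tightness of the laws of $(\bu_N,\eta_N,\partial_t\eta_N, B)$ on a suitable path space — here is where I invoke Jakubowski's theorem, since the velocity lives on a variable (here fixed, but handled uniformly as in Sections~\ref{ssec:comp}--\ref{ssec:compv}) domain and the natural space is not metrisable in the classical Skorokhod sense.

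Then, on a new probability space, I obtain a.s.-convergent versions $(\tilde\bu_N,\tilde\eta_N,\tilde B_N)\to(\tilde\bu,\tilde\eta,\tilde B)$, pass to the limit in each term of the Galerkin identity — the linear terms by weak convergence, the regularised convective term by strong $L^2$-convergence of $\tilde\bu_N$ (from the Aubin--Lions-type Theorem~\ref{thm:auba}, or directly from compactness in the fixed geometry), and the stochastic integral by the standard martingale-identification argument (showing the limit process is a martingale with the correct quadratic and cross variations, hence the It\^o integral against $\tilde B$) — and thereby produce a martingale solution of the linearised regularised system, with the energy inequality following from lower semicontinuity. The main obstacle, as in \cite{lengeler2014weak,MuSc}, is the compactness of the velocity field in the convective term: even though the geometry is fixed here, the fluid test functions are $\eta_\epsilon$-dependent and time-dependent, and one cannot directly apply classical Aubin--Lions because $\partial_t\bu_N$ is only controlled in a space of divergence-free-dual type on the moving-in-time (here prescribed) domain; the remedy is to reinterpret Theorem~\ref{thm:auba} probabilistically — verifying the boundedness, approximability, and equi-continuity hypotheses (the last using the negative-norm increment bounds, uniformly in $N$ and in probability) — to get $\int_I\langle\tilde\bu_N,\cdot\rangle\to\int_I\langle\tilde\bu,\cdot\rangle$, which is exactly enough to identify the nonlinearity. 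A secondary technical point is the careful bookkeeping of the It\^o--Stratonovich correction under the Galerkin projection so that energy conservation of the noise is preserved at every level; this is routine given Section~\ref{sec:strat} but must be done with the projected test functions $\iota_{\eta_\epsilon}\phi_k$.
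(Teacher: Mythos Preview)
Your proposal rests on a misreading of the system \eqref{contEqeps}--\eqref{shellEqeps}: the function $\eta_\epsilon$ appearing in the domain $\mathcal O_{\eta_\epsilon}$, in $g_{\eta_\epsilon}$, and implicitly in $\bu_\epsilon$ is \emph{not} a given, externally prescribed regularisation --- it is the mollification of the \emph{unknown} $\eta$ itself (the paper says so explicitly just below \eqref{shellEqeps}). Consequently the problem is still nonlinear and fully coupled: the fluid domain depends on the solution, and the convective term $(\bu_\epsilon\cdot\nabx)\bu$ is quadratic in the unknown. Your entire scheme --- Galerkin on a fixed geometry, energy estimate, Skorokhod, limit passage --- solves only the \emph{linearised} auxiliary problem (the paper's Theorem~\ref{thm:mainveps}, where $(\eta_\epsilon,\bu_\epsilon)$ is replaced by a given pair $(\zeta_\epsilon,\bv_\epsilon)$), not Theorem~\ref{thm:main'}.

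What is missing is the second layer of the paper's argument (Section~\ref{sec:regul}): a fixed-point/compactness argument on the solution map $(\zeta,\bv)\mapsto(\eta,\bu)$. For that map even to be well-defined one needs the linearised problem to have a \emph{probabilistically strong} (not merely martingale) solution on the original filtered space, since the input $(\zeta,\bv)$ must stay $(\mathfrak F_t)$-adapted with respect to a fixed Brownian motion. The paper obtains this by proving a pathwise energy \emph{equality} (Proposition~\ref{prop:unique}) for the linearised system --- exploiting that the Stratonovich transport noise is exactly energy-conservative, so no BDG/Gronwall is needed and the estimate holds $\mathbb P$-a.s., not just in expectation --- which yields pathwise uniqueness, and then invokes the Gy\"ongy--Krylov characterisation to upgrade the martingale solution to a strong one. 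Only then can one take a sequence $(\zeta^n,\bv^n)$, produce solutions $(\eta^n,\bu^n)$ on the same basis, run the tightness/Skorokhod machinery a second time, and again use pathwise uniqueness plus Gy\"ongy--Krylov to return to the original probability space and close the fixed point. None of this structure is present in your outline.
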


 In order to solve \eqref{contEqeps}--\eqref{shellEqeps}
we linearize the problem by replacing the regularized velocity in the convective term with a regularization  of a \textit{given} velocity field $\bv\in\mathbb{R}^3$. We also replace the regularized geometry with a regularized geometry with respect to a  given structure displacement $\zeta$  with an initial state $\zeta(0,\cdot)=\eta_0$. The corresponding regularization of the pair $(\zeta,\bv)$ is denoted by $(\zeta_\epsilon,\bv_\epsilon)$.
%\footnote{Here, $f_\epsilon:=\mathcal{R}_\epsilon f$, where $(\mathcal{R}_\epsilon)_{\epsilon>0}$ commutes with $\partial_t$. See \cite{lengeler2014weak} for more details.} 
The solution we seek will be constructed as the limit $N\rightarrow\infty$ of the solution $(\eta^N,\bu^N)$ to a finite dimensional Galerkin approximation system incorporating these regularizing terms. Since this is a linear system we aim to construct a probabilistically strong solution defined on a stochastic basis $(\Omega,\mathfrak F,(\mathfrak F_t)_{t\geq0},\mathbb P)$ and driven by a given Brownian motion  $(B_t)$ relative to $(\mathfrak F_t)$. Suppose that $(\zeta,\bv)$ (and thus its regularization $(\zeta_\epsilon,\bv_\epsilon)$) are a given pair of $(\mathfrak F_t)$-progressively measurable\footnote{To be understood in the sense of random distributions, cf. \cite[Chapter 2.8]{BFHbook}.} random variables with values in 
$C(\overline I\times \Gamma)\times L^2(I;L^2(\mathcal O\cup S_\alpha))$ belonging to $L^p(\Omega)$ for some sufficiently large $p$.
%\begin{align*}
%W^{1,2}(I;W^{2,2}(\Gamma))\times L^\infty(I;W^{3,2}(\Gamma))\cap W^{1,\infty}(I;L^2(\Gamma))\times L^\infty(I;L^2(\mathcal O\cup S_\alpha)),
%\end{align*}
where we suppose that $\epsilon$ is small enough such that  $\|\zeta_\epsilon\|_{L^\infty(I\times\Gamma)}< \alpha<L$ a.s. 
We now look for an $(\mathfrak F_t)$-progressively measurable process
$(\eta,\bu)$ with values in the space
\begin{equation}
\begin{aligned}
\label{regPairDomain}
W^{1,2}(I;W^{2,2}(\Gamma))&\times L^\infty(I;W^{3,2}(\Gamma))\cap W^{1,\infty}(I;L^2(\Gamma))
\\&\times L^\infty(I;L^2(\Omega_{\zeta_\epsilon}))\cap L^2(I;W^{1,2}_{\divx}(\Omega_{\zeta_\epsilon}))
\end{aligned}
\end{equation}
 such that
\begin{equation}
\begin{aligned}
\label{weakeps}
\dd  \bigg(&\int_{\mathcal{O}_{\zeta_\epsilon}}\bu  \cdot \mathcal J_{\zeta_{\epsilon}(t)} \bm{\phi}\dx
+
 \int_{\Gamma}   \partial_t\eta \, \iota_{\zeta_{\epsilon}}\phi\dy
 \bigg)
\\&=
\int_{\Gamma}   \big(\partial_t\eta \, \partial_t(\iota_{\zeta_{\epsilon}}\phi)
-
\Dely \iota_{\zeta_{\epsilon}}\phi \,\Dely\eta
-
\epsilon
\Dely \iota_{\zeta_{\epsilon}}\phi \,\partial_t\Dely\eta
\big)\dy\dt
\\&
-\epsilon
\int_{\Gamma}
\naby^3 \iota_{\zeta_{\epsilon}}\phi :\naby^3\eta
\dy\dt
+\int_{\Gamma} \bigg(\frac{1}{2}\bn_{\zeta_\epsilon } \cdot \bn \iota_{\zeta_\epsilon}\phi \,  \partial_t\zeta_{\epsilon} \,\partial_t\eta \,
 \vert\det(\naby\bm{\varphi}_{\zeta_\epsilon })\vert
 \bigg)\dy\dt
\\
&+
 \int_{\mathcal{O}_{\zeta_\epsilon}}\Big(  \bu\cdot \partial_t (\mathcal{J}_{\zeta_\epsilon (t)} \bm{\phi} )
-
\frac{1}{2}((\bv_\epsilon\cdot\nabx)\bu)\cdot (\mathcal{J}_{\zeta_\epsilon(t)}\bfphi )  \Big) \dx\dt 
\\
&+
 \int_{\mathcal{O}_{\zeta_\epsilon}}\Big(  
\frac{1}{2}((\bv_\epsilon\cdot\nabx)\mathcal{J}_{\zeta_\epsilon(t)}\bm{\phi}) \cdot \bu
-\nabx \bu:\nabx (\mathcal{J}_{\zeta_\epsilon (t)}\bm{\phi}) \Big) \dx\dt  
 \\
&
 +
 \frac{1}{2}
\int_{\Gamma} 
((\bsigma\cdot\naby)(\bsigma\cdot\naby)\partial_t \eta ) \iota_{\zeta_{\epsilon}}\phi
\dy\dt 
+
\int_{\Gamma}  
( (\bsigma\cdot\naby)\partial_t \eta ) 
\iota_{\zeta_{\epsilon}}\phi \dy\dd B_t
\end{aligned}
\end{equation}
for all $(\phi,\bm{\phi})\in W^{3,2}(\Gamma )\times W^{1,2}_{\Div}(\mathcal O)$ with $\bm{\phi} \circ\bm{\varphi}= \phi\bn$ on $\Gamma$. Moreover, we require $\bu\circ\bfvarphi_{\zeta_\epsilon}=\bn\partial_t\eta$ on $I \times \Gamma$.
% and assume that they satisfy the interface condition $ \bv_\epsilon\circ\bm{\varphi}_{\zeta_\epsilon}=\bn\partial_t \zeta_\epsilon$ on $I\times\Gamma$. We claim that $(\bu^N,\eta^N)$ exists that solves  
%\begin{align}
% \partial_t \bu^N  + (\bv_\epsilon\cdot \nabx)\bu^N  
%= 
%\Delx \bu^N -\nabx\pi^N,
%\label{momEqN}
%\\
%\dd \partial_t\eta^N +(\Dely^2\eta^N +g^N_{\zeta_\epsilon})\dt 
%+
%((\bsigma\cdot\naby)\partial_t \eta^N )\circ
%\dd B_t=0,
%\label{shellEqN}
%\end{align}

%Compared to \eqref{eq:en} this gives the additional term $\epsilon\int_\Gamma|\naby^3\eta(t)|^2\dy$
%on the left-hand side and thus an estimate for $\eta\in L^\infty(I;W^{3,2}(\omega))$. In particular, the boundary of
%$\mathcal O_{\eta(t)}$ is Lipschitz uniformly in time. The aim is then to prove the following
\begin{theorem}\label{thm:mainveps}
Let $(\eta_0, \eta_1, \bu_0,\bsigma)$ be a dataset such that
\eqref{datasetAlone} holds
and we have additionally $\eta_0\in W^{3,2}(\Gamma)$. Let $(\Omega,\mathfrak F,(\mathfrak F_t)_{t\geq0},\mathbb P)$ be a stochastic basis with a complete, right-continuous filtration and let $(B_t)$ be an $(\mathfrak{F}_t)$-Brownian motion.  
Then there is a unique probabilistically strong solution of \eqref{weakeps}
with data $(\eta_0, \eta_1, \bu_0,\bsigma)$. The interval of existence is of the form $\overline{I}=(0,t)$, 
where $t<T$ only if $\lim_{s\rightarrow t}\|\eta(s)\|_{L^\infty(\Gamma)}=L$
a.s. in $\Omega_0$ for some $\Omega_0\subset\Omega$ with $\mathbb P(\Omega_0)>0$.
\end{theorem}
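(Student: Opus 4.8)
The plan is to construct the solution by a Galerkin scheme on the reference configuration and to pass to the limit using \emph{only weak compactness}: since $\zeta_\epsilon$, $\bv_\epsilon$ and $B$ are given, every term of \eqref{weakeps} is \emph{linear} in the unknown $(\eta,\bu)$, so neither strong compactness nor a Skorokhod representation is needed at this stage, and uniqueness will come for free. First I would fix a smooth basis $(\phi_k)_{k\in\mathbb N}$ of $W^{3,2}(\Gamma)$ together with a basis of the interior solenoidal fields $\bv\in W^{1,2}_{\Div}(\mathcal O)$ with $\bv\circ\bfvarphi=0$; extending each $\phi_k$ solenoidally by the operator of Proposition \ref{prop:musc} (with geometry $\zeta_\epsilon$) and transporting it by the Piola map $\mathcal J_{\zeta_\epsilon(t)}$ yields a family $(\phi_k,\bfphi_k)$ on the reference domain with $\bfphi_k\circ\bfvarphi=\phi_k\bn$ spanning the (time-dependent, random) Galerkin spaces. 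With the ansatz $\partial_t\eta^N=\sum_{k\le N}a^N_k\phi_k$, $\eta^N(t)=\eta_0^N+\int_0^t\partial_t\eta^N\,\mathrm ds$ and $\bu^N(t)=\sum_{k\le N}a^N_k\,\mathcal J_{\zeta_\epsilon(t)}\bfphi_k$ (which automatically enforces $\bu^N\circ\bfvarphi_{\zeta_\epsilon}=\bn\partial_t\eta^N$), testing \eqref{weakeps} against the $(\phi_j,\bfphi_j)$, $j\le N$, turns the system into a linear It\^o SDE $\mathrm d(M(t)\mathbf a^N)=\big(\mathbf F(t,\mathbf a^N)+\text{(It\^o--Stratonovich correction)}\big)\,\mathrm dt+\mathbf G(t,\mathbf a^N)\,\mathrm dB_t$ for $\mathbf a^N=(a^N_k)_{k\le N}$, the Stratonovich term being rewritten as in Section \ref{sec:strat}. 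The Gram-type matrix $M(t)$ and the affine maps $\mathbf F,\mathbf G$ depend progressively measurably on $\zeta_\epsilon,\bv_\epsilon$ and finitely many of their derivatives; the standing hypothesis $\|\zeta_\epsilon\|_{L^\infty(I\times\Gamma)}<\alpha<L$ a.s.\ (together with the smoothness of $\zeta_\epsilon$) makes $\bm{\Psi}_{\zeta_\epsilon(t)}$ a $C^1$-diffeomorphism with non-degenerate Jacobian, uniformly in $t$, so that $M(t)$ is symmetric positive definite with $M^{-1},\dot M$ bounded (a.s., with moments controlled by those of the data). Hence the SDE has locally bounded progressively measurable coefficients and, being linear, possesses a unique $(\mathfrak F_t)$-adapted strong solution on all of $\overline I$, with $L^p(\Omega)$-bounds inherited from the data.

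Next I would derive a priori bounds uniform in $N$ by the energy method: testing \eqref{weakeps} against $(\phi_j,\bfphi_j)$, multiplying by $a^N_j$ and summing amounts, via It\^o's formula for $(\mathbf a^N)^\top M\mathbf a^N$, to ``testing with $(\partial_t\eta^N,\bu^N)$''. On the deterministic side the mechanism is that of \cite{lengeler2014weak,MuSc}: the antisymmetric $\tfrac12$-splitting of the convective term makes its contribution vanish identically, while the $\partial_t(\mathcal J_{\zeta_\epsilon}\bfphi)$-contribution, the Reynolds transport of $\tfrac12\|\bu^N\|_{L^2(\mathcal O_{\zeta_\epsilon})}^2$ and the added boundary integral $\tfrac12\,\bn_{\zeta_\epsilon}\!\cdot\bn\,\iota_{\zeta_\epsilon}\phi\,\partial_t\zeta_\epsilon\,\partial_t\eta\,|\det(\naby\bfvarphi_{\zeta_\epsilon})|$ (which equals the boundary flux $\tfrac12\int_{\partial\mathcal O_{\zeta_\epsilon}}|\bu^N|^2(\text{domain velocity})\cdot\bn_{\zeta_\epsilon}$ because $\bu^N\circ\bfvarphi_{\zeta_\epsilon}=\bn\partial_t\eta^N$) cancel, leaving the clean rate of change of $\tfrac12\|\bu^N\|^2_{L^2(\mathcal O_{\zeta_\epsilon})}+\tfrac12\|\partial_t\eta^N\|^2_{L^2(\Gamma)}$. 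On the stochastic side the transport noise is energy conservative: the It\^o--Stratonovich correction $\tfrac12((\bsigma\cdot\naby)^2\partial_t\eta)\iota_{\zeta_\epsilon}\phi$ cancels the quadratic variation produced by $((\bsigma\cdot\naby)\partial_t\eta)\iota_{\zeta_\epsilon}\phi\,\mathrm dB_t$, and what remains is a square-integrable martingale. One then controls, uniformly in $N$,
\begin{align*}
&\tfrac12\|\bu^N(t)\|_{L^2(\mathcal O_{\zeta_\epsilon})}^2+\tfrac12\|\partial_t\eta^N(t)\|_{L^2(\Gamma)}^2+\tfrac12\|\Dely\eta^N(t)\|_{L^2(\Gamma)}^2+\tfrac{\epsilon}{2}\|\eta^N(t)\|_{W^{3,2}(\Gamma)}^2
\\&\qquad+\int_0^t\big(\|\nabx\bu^N\|_{L^2}^2+\epsilon\|\partial_t\Dely\eta^N\|_{L^2}^2\big)\,\mathrm ds
\end{align*}
by the initial energy plus a martingale plus lower-order terms involving $\zeta_\epsilon,\bv_\epsilon$; Gronwall, the Burkholder--Davis--Gundy inequality and the $L^p(\Omega)$-integrability of the data then give bounds for $\eta^N$ in $L^p(\Omega;L^\infty(I;W^{3,2}(\Gamma)))$, for $\partial_t\eta^N$ in $L^p(\Omega;L^\infty(I;L^2(\Gamma))\cap L^2(I;W^{2,2}(\Gamma)))$ and for $\bu^N$ in $L^p(\Omega;L^\infty(I;L^2(\mathcal O_{\zeta_\epsilon}))\cap L^2(I;W^{1,2}(\mathcal O_{\zeta_\epsilon})))$, i.e.\ exactly the regularity of \eqref{regPairDomain} (in particular $\eta^N\in W^{1,2}(I;W^{2,2}(\Gamma))$).

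Then I would pass to the limit $N\to\infty$. By the uniform bounds a subsequence of $(\eta^N,\bu^N)$ converges weakly-$*$ to some $(\eta,\bu)$ in the spaces of \eqref{regPairDomain}; the limit stays $(\mathfrak F_t)$-progressively measurable (these are weakly closed subspaces) and still satisfies $\bu\circ\bfvarphi_{\zeta_\epsilon}=\bn\partial_t\eta$ (weak continuity of the trace $W^{1,2}\to L^2(\Gamma)$). Since \eqref{weakeps} is linear in $(\eta,\bu)$, weak convergence suffices to pass to the limit in every term, including the convective one (linear in $\bu$ since $\bv_\epsilon$ is fixed) and the stochastic one (the It\^o integral being a bounded linear operation, hence weakly sequentially continuous, by It\^o's isometry); a density argument — the span of the $(\phi_k,\bfphi_k)$ is dense in $\{(\phi,\bfphi)\in W^{3,2}(\Gamma)\times W^{1,2}_{\Div}(\mathcal O):\bfphi\circ\bfvarphi=\phi\bn\}$ and all terms of \eqref{weakeps} are continuous in $(\phi,\bfphi)$ — then yields that $((\Omega,\mathfrak F,(\mathfrak F_t),\mathbb P),\eta,\bu)$ is a probabilistically strong solution. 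Uniqueness follows along the same lines: the difference $(\bar\eta,\bar\bu)$ of two solutions solves \eqref{weakeps} with vanishing data, the regularity \eqref{regPairDomain} is precisely what makes the energy identity of the previous step rigorous for $(\bar\eta,\bar\bu)$ (via the variational It\^o formula), and Gronwall forces $(\bar\eta,\bar\bu)\equiv0$. Finally, since the standing assumption gives $\|\zeta_\epsilon\|_{L^\infty}<\alpha<L$ on all of $I$, the solution is in fact global, $\overline I=(0,T)$; if one only retains $\|\zeta_\epsilon\|_{L^\infty}<L$, the stated form of the existence interval is obtained by the usual prolongation along the stopping time $\inf\{t:\|\eta(t)\|_{L^\infty(\Gamma)}=L\}$.

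The main obstacle is the energy estimate. One has to check that the moving-domain terms — those coming from $\partial_t\mathcal J_{\zeta_\epsilon}$, the antisymmetrised convective term, and the compensating boundary integral — genuinely reorganise into the Reynolds transport form so that the balance closes with the correct dissipative and regularising signs, the weight $\iota_{\zeta_\epsilon}$ and the derivatives of $\zeta_\epsilon$ producing only terms that are absorbed by Gronwall; and, simultaneously, that the It\^o--Stratonovich correction exactly annihilates the contribution of the transport noise to the (expected) energy. This interplay between the Piola-transport geometry and the conservative structure of the noise is the heart of the matter; once it is in place, the Galerkin construction is routine and, the linearised problem being linear, both the passage to the limit and the uniqueness are soft.
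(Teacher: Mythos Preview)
Your approach is correct and takes a genuinely simpler route than the paper. The key observation---that since $\zeta_\epsilon$, $\bv_\epsilon$ and $B$ are given, \eqref{weakeps} is linear in $(\eta,\bu)$, so weak compactness in $L^p(\Omega;\cdot)$ suffices to pass to the limit---is valid: the It\^o integral is a bounded linear (indeed isometric) map on progressively measurable $L^2$ processes and hence weak--weak continuous, and the subspace of adapted processes is weakly closed. The paper instead runs the full stochastic-compactness machinery: it proves tightness of the law of $\partial_t\eta^N$ on $L^2$ via the moving-domain Aubin--Lions criterion (Theorem~\ref{thm:auba}, Lemma~\ref{lemma:fg}), applies Jakubowski--Skorokhod to manufacture a.s.\ convergence on a new basis (Proposition~\ref{prop:skorokhodN}), proves the energy \emph{equality} (Proposition~\ref{prop:unique}) to obtain pathwise uniqueness, and then invokes Gy\"ongy--Krylov to return to the original probability space. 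This detour is not forced by the linearised problem; rather, it rehearses in a clean setting exactly the compactness argument that is genuinely needed in Sections~\ref{sec:regul} and~\ref{sec:limit}, where the fixed-point and $\epsilon\to0$ limits are nonlinear and strong $L^2$-compactness of $(\partial_t\eta,\bu)$ is indispensable. Two small corrections to your sketch: first, the Galerkin energy balance is an exact \emph{equality} with no residual terms---when tested against $\partial_t\eta^N$ the stochastic integral itself vanishes, $\int_\Gamma((\bsigma\cdot\naby)\partial_t\eta^N)\,\partial_t\eta^N\dy=0$ by $\divy\bsigma=0$ and periodicity, not merely the correction/quadratic-variation pair, so neither BDG nor Gronwall is needed and the bounds of \eqref{regPairDomain} hold pathwise, uniformly in $\omega$; second, the paper's Galerkin basis (see \eqref{eq:bwi}) is built from Stokes extensions on the reference domain and then Piola-transported, rather than via the operator of Proposition~\ref{prop:musc}---your hybrid description would need a small adjustment, though either construction works.
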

It will turn out that the solution satisfies the energy equality
\begin{align}\label{eq:eneps}
\begin{aligned}
\tfrac{1}{2}&\int_{\mathcal{O}_{\zeta_\epsilon}}  |\bu(t)|^2\dx
+
\int_0^t\int_{\mathcal{O}_{\zeta_\epsilon}}|\nabx\bu|^2\dx\ds
+
\epsilon
\int_0^t\int_\Gamma
|\partial_s\Dely\eta|^2\dy\ds
\\&+
\int_\Gamma\Big(\tfrac{1}{2}|\partial_t\eta(t)|^2
+
\tfrac{1}{2}
|\Dely\eta(t)|^2
+\epsilon |\naby^3\eta(t)|^2\Big)\dy
\\
&
= \tfrac{1}{2}\int_{\mathcal O_{\zeta_\epsilon(0)}}|\bu_0|^2\dx
+
\int_\Gamma\Big(\tfrac{1}{2}|\eta_1|^2
+\tfrac{1}{2}|\Dely\eta_0|^2+ \epsilon|\naby^3\eta_0|\Big)\dy
\end{aligned}
\end{align}
a.s. for a.a. $t\in I$.

The aim of the following subsection is to construct a Galerkin approximation of \eqref{contEqeps}--\eqref{shellEqeps}
on a given stochastic basis $(\Omega,\mathfrak{F},(\mathfrak{F})_{t\geq0},\mathbb{P})$, while its limit passage 
(and thus the proof of Theorem \ref{thm:main'}) can be found in the next section.

\subsection{The linearised Galerkin problem}

Let us now explain in which function spaces we seek  the finite dimensional objects $(\eta^N,\bn^N)$. Let $(Y_i)_{i\in \mathbb{N}}$ be a basis of $W^{2,2}(\Gamma )$
 %(the space on mean-free elements in $W^{2,2}(\Gamma )$) 
 and let $( \mathbf{X}_i)_{i\in \mathbb{N}}$ be a basis of $W^{1,2}_{0,\divx}(\mathcal{O})$.
Clearly, there exists divergence-free vectors fields 
$\mathbf{Y}_i$ that are solving Stokes systems in the reference domain $\mathcal{O}$ 
with boundary data $(Y_i\bn)\circ\bfvarphi^{-1}$. We then set
\begin{align}\label{eq:bwi}
\bm{w}_i  = \left\{
  \begin{array}{lr}
    \mathbf{X}_i & : i \text{ even},\\
    \mathbf{Y}_i & : i \text{ odd}
  \end{array}
\right.
\end{align}
and set $w_i=\bm{w}_i \circ\bm{\varphi}|_{\Gamma}\cdot\bn $.

Now we take the pair $(\zeta,\bv)$ 
from 
$$L^\infty(\Omega;C(\overline I\times \Gamma))\times L^\infty(\Omega;L^2(I;L^2(\R^3)))$$
being $(\mathfrak F_t)$-progressively measurable.
%Now suppose that there is an $(\mathfrak F_t)$-progressively measurable  process 
%$\zeta$ such that
%$$\zeta \in  L^\infty(I;W^{3,2}(\Gamma))\cap W^{1,\infty}(I;L^2(\Gamma))\quad\text{a.s.}$$
%and $\sup_I\|\zeta\|_{L^\infty(\Gamma)}<L$ a.s. as well as $\bv\in L^2(I;W^{1,2}_{\divx}(\mathcal O_{\zeta}))$
%with $\bv \circ\bm{\varphi}_{\zeta}   =\partial_t\zeta \bn$ on $\mathcal O\times \Gamma$.
We search for $(\alpha^N_i)_{k,N\in \mathbb{N}}:\Omega\times\overline{I}\rightarrow\mathbb{R}^n$ 
such that $$\bu^N=\sum_{i=1}^N\alpha^N_i(\mathcal{J}_{\zeta_\epsilon (t)}\bm{w}_i)\quad\text{and}\quad 
\eta^N(t,\cdot)=\sum_{i=1}^N\int_0^t\alpha^N_i\iota_{\zeta_\epsilon}w_i\ds+\eta_0$$ solve
 the equations\footnote{We neglect the dependency of the unknown $(\eta^N,\bu^N)$ on $\epsilon$ at this point for simplicity.}   
\begin{align}
\dd&\bigg(\int_{\mathcal{O}_{\zeta_\epsilon}}
\bu^N  \cdot \mathcal{J}_{{\zeta_\epsilon}(t)}\bm{w}_j \dx
+
\int_{\Gamma}  \partial_t \eta^N \,\iota_{\zeta_\epsilon} w_j\dy
\bigg)
\nonumber 
\\
&=\int_{\Gamma} \big(\partial_t\eta^N \, \partial_t(\iota_{\zeta_{\epsilon}}w_j)
 -
 \Dely \iota_{\zeta_\epsilon} w_j \Dely\eta^N
 -
\epsilon
\Dely \iota_{\zeta_{\epsilon}}w_j \,\partial_t\Dely\eta^N
\big)\dy\dt
\nonumber 
\\&
-\epsilon
\int_{\Gamma}
\naby^3 \iota_{\zeta_{\epsilon}}w_j :\naby^3\eta^N
\dy\dt
+
\int_{\Gamma} \bigg(\frac{1}{2}\bn_{\zeta_\epsilon} \cdot \bn^\top \iota_{\zeta_\epsilon} w_j \,  \partial_t{\zeta_\epsilon} \,\partial_t\eta^N  \,
 \vert\det(\naby\bm{\varphi}_{\zeta_\epsilon})\vert
\bigg)\dy\dt
\nonumber 
 \\
&+
 \int_{\mathcal{O}_{\zeta_\epsilon}}\Big(  \bu^N\cdot \partial_t (\mathcal{J}_{{\zeta_\epsilon}(t)} \bm{w}_j  )
-
\frac{1}{2}((\bv_\epsilon\cdot\nabx)\bu^N)\cdot (\mathcal{J}_{{\zeta_\epsilon}(t)}\bm{w}_j )  \Big) \dx\dt 
\nonumber 
\\
&+
 \int_{\mathcal{O}_{\zeta_\epsilon}}\Big(  
\frac{1}{2}((\bv_\epsilon\cdot\nabx)\mathcal{J}_{{\zeta_\epsilon}(t)}\bm{w}_j) \cdot \bu^N
-\nabx \bu^N:\nabx (\mathcal{J}_{{\zeta_\epsilon}(t)}\bm{w}_j ) \Big) \dx\dt 
\nonumber 
\\&+
\frac{1}{2} 
 \int_{\Gamma}  ((\bsigma\cdot\naby)(\bsigma\cdot\naby)\partial_t \eta^N)  \iota_{\zeta_\epsilon} w_j \dy\dt
+
  \int_{\Gamma}  ((\bsigma\cdot\naby)\partial_t \eta^N) \iota_{\zeta_\epsilon} w_j\dy\dd B_t
\label{galerkinweak1}
\end{align}
for $1\leq j\leq N$ with an initial condition $\alpha^N_i(0)$ which is such that
\begin{align}
%&\eta^N(0,\cdot)\rightarrow \eta_0 \qquad \text{in}\qquad W^{3,2}(\Gamma ),\label{initialGalerkinConvEta0}
%\\
&\partial_t\eta^N(0,\cdot)\rightarrow \eta_1 \qquad \text{in}\qquad L^2(\Gamma ),\label{initialGalerkinConvEta}
\\
&\bu^N(0,\cdot)\rightarrow \bu_0 \qquad \text{in}\qquad L^2(\mathcal{O}_{\zeta_\epsilon(0)}).
\label{initialGalerkinConvVel}
\end{align}
Note that the derivation of the weak formulation \eqref{galerkinweak1}
is slightly different to the derivation of \eqref{weaklimit'} due to the differences in their respective advective terms. The treatment of the former advection term  goes as follows. By using the trivial identity
\begin{align*}
\int_{\mathcal{O}_{\zeta_\epsilon}}  
 ((\bv_\epsilon\cdot\nabx )\mathcal J_{\zeta_\epsilon (t)} \bm{w}_j) \cdot \bu^N
  \dx\dt 
  &=
  \frac{1}{2}
\int_{\mathcal{O}_{\zeta_\epsilon}}  
 ((\bv_\epsilon\cdot\nabx )\mathcal J_{\zeta_\epsilon (t)} \bm{w}_j) \cdot \bu^N
  \dx\dt 
  \\&+
  \frac{1}{2}
  \int_{\mathcal{O}_{\zeta_\epsilon}}  
 ((\bv_\epsilon\cdot\nabx )\mathcal J_{\zeta_\epsilon (t)} \bm{w}_j) \cdot \bu^N
  \dx\dt 
\end{align*}
we rewrite the last term as follows
\begin{align*}
  \int_{\mathcal{O}_{\zeta_\epsilon}}&  
 ((\bv_\epsilon\cdot\nabx )\mathcal J_{\zeta_\epsilon (t)} \bm{w}_j) \cdot \bu^N
  \dx\dt 
  =
\int_{\mathcal{O}_{\zeta_\epsilon}}  
 \divx( \bv_\epsilon\otimes\mathcal J_{\zeta_\epsilon (t)} \bm{w}_j) \cdot \bu^N
  \dx\dt  
\\&=
\int_{\partial \mathcal{O}_{\zeta_\epsilon}}  
 \bn_{\zeta_\epsilon }\cdot([ \bv_\epsilon\otimes \mathcal J_{\zeta_\epsilon (t)} \bm{w}_j ] \bu^N)
  \dd\mathcal{H}^2\dt  
 -
  \int_{\mathcal{O}_{\zeta_\epsilon}}  
  ( \bv_\epsilon\otimes\mathcal J_{\zeta_\epsilon (t)} \bm{w}_j) :\nabx \bu^N
  \dx\dt  
\\&=
\int_{\partial \mathcal{O}_{\zeta_\epsilon}}  
 \bn_{\zeta_\epsilon }  \cdot([ \bv_\epsilon\circ\bm{\varphi}_{\zeta_\epsilon }\circ\bm{\varphi}_{\zeta_\epsilon }^{-1}  ((\mathcal J_{\zeta_\epsilon (t)} \bm{w}_j)\circ\bm{\varphi}_{\zeta_\epsilon }\circ\bm{\varphi}_{\zeta_\epsilon }^{-1})^\top  ] \bu^N\circ\bm{\varphi}_{\zeta_\epsilon }\circ\bm{\varphi}_{\zeta_\epsilon }^{-1} )
  \dd\mathcal{H}^2\dt  
 \\&
 \qquad-
  \int_{\mathcal{O}_{\zeta_\epsilon}}  
  ( (\bv_\epsilon\cdot\nabx) \bu^N)\cdot\mathcal J_{\zeta_\epsilon (t)} \bm{w}_j
  \dx\dt  
  \\&=
\int_{\Gamma }  
 \bn_{\zeta_\epsilon } \cdot \bn^\top \iota_{\zeta_\epsilon}w_j \,  \partial_t \zeta_\epsilon  \,\partial_t\eta^N  \,
 \vert\det(\naby\bm{\varphi}_{\zeta_\epsilon })\vert
  \dy\dt  
  \\&\qquad+
  \int_{\mathcal{O}_{\zeta_\epsilon}}  
  ( (\bv_\epsilon\cdot\nabx) \bu^N)\cdot\mathcal J_{\zeta_\epsilon (t)} \bm{w}_j
  \dx\dt  
\end{align*}
where we have used $\bu^N\circ\bm{\varphi}_{\zeta_\epsilon } =\bn\partial_t\eta^N $ and $ \mathcal J_{\zeta_\epsilon (t)}\bm{w}_j \circ\bm{\varphi}_{\zeta_\epsilon }= \iota_{\zeta_\epsilon}w_j\bn$ in the last step. This explains the presence of the Jacobian determinant in \eqref{galerkinweak1}.
\\
Moving on, we note that equation \eqref{galerkinweak1}  is equivalent to
{\small
\begin{align*}
\dd &\bigg[\sum_{i=1}^N \alpha^N_i \bigg(\int_{\mathcal{O}_{\zeta_\epsilon}}\mathcal{J}_{\zeta_\epsilon (t)}\bm{w}_i  \cdot \mathcal{J}_{\zeta_\epsilon (t)}\bm{w}_j \dx
+
\int_{\Gamma} \iota_{\zeta_\epsilon}   w_i \,\iota_{\zeta_\epsilon}  w_j\dy
\bigg)\bigg]
\\
&=
\sum_{i=1}^N
 \int_{\mathcal{O}_{\zeta_\epsilon}}
 \alpha^N_i
 \Big(  \mathcal{J}_{\zeta_\epsilon (t)}\bm{w}_i\cdot \partial_t  (\mathcal{J}_{\zeta_\epsilon (t)}\bm{w}_j ) 
-
\frac{1}{2}((\bv_\epsilon\cdot\nabx)\mathcal{J}_{\zeta_\epsilon (t)}\bm{w}_i)\cdot(\mathcal{J}_{\zeta_\epsilon (t)}\bm{w}_j ) \Big) \dx\dt 
\\
&+
\sum_{i=1}^N
 \int_{\mathcal{O}_{\zeta_\epsilon}}
 \alpha^N_i
 \Big( 
\frac{1}{2}((\bv_\epsilon\cdot\nabx)\mathcal{J}_{\zeta_\epsilon (t)}\bm{w}_j) \cdot
(\mathcal{J}_{\zeta_\epsilon (t)} \bm{w}_i)
-
  \nabx (\mathcal{J}_{\zeta_\epsilon (t)}\bm{w}_i):\nabx (\mathcal{J}_{\zeta_\epsilon (t)}\bm{w}_j ) \Big) \dx\dt 
 \\&
 +
 \sum_{i=1}^N
 \frac{\alpha^N_i}{2}
 \int_{\Gamma}  \bn_{\zeta_\epsilon } \cdot \bn^\top \iota_{\zeta_\epsilon} w_j \,  \partial_t\zeta_\epsilon \,\iota_{\zeta_\epsilon} w_i  \,
 \vert\det(\naby\bm{\varphi}_{\zeta_\epsilon })\vert\dy\dt 
  -
   \int_{\Gamma}  
 \Dely \eta_0\, \Dely \iota_{\zeta_\epsilon} w_j \dy\dt 
  \\&
  -
  \sum_{i=1}^N
  \int_{\Gamma}  \int_0^t\alpha^N_i(s)
 \Dely\iota_{\zeta_\epsilon}  w_i(s)\, \Dely \iota_{\zeta_\epsilon} w_j(t) \ds\dy\dt
 -
 \epsilon
   \int_{\Gamma}  
 \naby^3 \eta_0: \naby^3 \iota_{\zeta_\epsilon} w_j \dy\dt 
 \\&
  -\epsilon
  \sum_{i=1}^N
  \int_{\Gamma}  \int_0^t\alpha^N_i(s)
 \naby^3\iota_{\zeta_\epsilon}  w_i(s): \naby^3 \iota_{\zeta_\epsilon} w_j(t) \ds\dy\dt
 -\epsilon
  \sum_{i=1}^N
  \int_{\Gamma}   \alpha^N_i 
 \Dely\iota_{\zeta_\epsilon}  w_i   \Dely \iota_{\zeta_\epsilon} w_j   \dy\dt 
\\&+
\sum_{i=1}^N
\frac{\alpha^N_i}{2} 
 \int_{\Gamma}  ((\bsigma\cdot\naby)(\bsigma\cdot\naby)\iota_{\zeta_\epsilon} w_i)  \iota_{\zeta_\epsilon} w_j \dy\dt
+
\sum_{i=1}^N
\alpha^N_i 
  \int_{\Gamma}  ((\bsigma\cdot\naby)\iota_{\zeta_\epsilon} w_i) \iota_{\zeta_\epsilon} w_j\dy\dd B_t.
\end{align*}
}
To simplify notations, we drop the summation signs by employing Einstein's summation convention. Then for
{\small
\begin{align*}
(a_{ij}(t))
&:=
\int_{\mathcal{O}_{\zeta_\epsilon}}(\mathcal{J}_{\zeta_\epsilon (t)}\bm{w}_i ) \cdot(\mathcal{J}_{\zeta_\epsilon (t)} \bm{w}_j) \dx
+
\int_{\Gamma}   \iota_{\zeta_\epsilon} w_i \,\iota_{\zeta_\epsilon}  w_j\dy,
\\
(b_{ij}(t))
&:=
 \int_{\mathcal{O}_{\zeta_\epsilon}}
 \Big( (\mathcal{J}_{\zeta_\epsilon (t)} \bm{w}_i)\cdot \partial_t(\mathcal{J}_{\zeta_\epsilon (t)}  \bm{w}_j)  
-
\frac{1}{2}((\bv_\epsilon\cdot\nabx(\mathcal{J}_{\zeta_\epsilon (t)}\bm{w}_i)\cdot(\mathcal{J}_{\zeta_\epsilon (t)}\bm{w}_j )\Big) \dx,
\\&
+\int_{\mathcal{O}_{\zeta_\epsilon}}
 \Big(
\frac{1}{2}((\bv_\epsilon\cdot\nabx(\mathcal{J}_{\zeta_\epsilon (t)}\bm{w}_j) \cdot (\mathcal{J}_{\zeta_\epsilon (t)}\bm{w}_i)
 -
  \nabx (\mathcal{J}_{\zeta_\epsilon (t)} \bm{w}_i):\nabx (\mathcal{J}_{\zeta_\epsilon (t)}\bm{w}_j ) \Big) \dx
 \\&
 + 
 \frac{1}{2}
 \int_{\Gamma}  \bn_{\zeta_\epsilon } \cdot \bn^\top \iota_{\zeta_\epsilon}w_j \,  \partial_t\zeta_\epsilon \,\iota_{\zeta_\epsilon}w_i  \,
 \vert\det(\naby\bm{\varphi}_{\zeta_\epsilon })\vert\dy
-\epsilon
\int_{\Gamma}  
 \Dely\iota_{\zeta_\epsilon}  w_i \, \Dely \iota_{\zeta_\epsilon} w_j \dy
 \\&
 +
\frac{1}{2}
 \int_{\Gamma}  ((\bsigma\cdot\naby)(\bsigma\cdot\naby)\iota_{\zeta_\epsilon}w_i) \iota_{\zeta_\epsilon} w_j \dy,
\\
(c_{ij}(t,s))
&:=
-
\int_{\Gamma}  
 \Dely\iota_{\zeta_\epsilon}  w_i(s)\, \Dely \iota_{\zeta_\epsilon} w_j(t) \dy
 -
 \epsilon
\int_{\Gamma}  
 \naby^3\iota_{\zeta_\epsilon}  w_i(s): \naby^3 \iota_{\zeta_\epsilon} w_j(t) \dy,
\\
(d_{j}(t))
&:= -
   \int_{\Gamma}  
 \Dely \eta_0\, \Dely \iota_{\zeta_\epsilon} w_j \dy
 -
 \epsilon
   \int_{\Gamma}  
 \naby^3 \eta_0:\naby^3 \iota_{\zeta_\epsilon} w_j \dy,
 \\
(e_{ij}(t))
&:=
\int_{\Gamma}  ((\bsigma\cdot\naby)\iota_{\zeta_\epsilon} w_i)\iota_{\zeta_\epsilon}  w_j\dy,
\end{align*}
}
we can rewrite the above as the following system of SDEs
\begin{equation}
\begin{aligned}
\label{galerkinweak3}
\int_I\dd \big[\alpha^N_i(t) (a_{ij}(t))\big]
&=
\int_I
 \alpha^N_i(t)(b_{ij}(t))\dt  
  - 
  \int_I
   \int_0^t\alpha^N_i(s)(c_{ij}(t,s)) \ds \dt
\\& -
 \int_I
   (d_j(t))\dt  
+
\int_I
\alpha^N_i(t) (e_{ij}(t))\dd B_t.
\end{aligned}
\end{equation}
Since the coefficient matrix $(a_{ij}(t))$ is symmetric and positive definite, it is
invertible. As the problem is linear, we can infer the existence of a unique global solution, cf. \cite[Theorem 3.1.1.]{PrRo}. 
Moreover, we have the following energy estimate which is obtained by applying It\^{o}'s formula \footnote{This can be rewritten in terms of the $\alpha_i^N$, cf. \eqref{galerkinweak3}, such that a finite dimensional version is sufficient. Note that the coefficients in \eqref{galerkinweak3} are random but differentiable in time.} to the process $t\mapsto \tfrac{1}{2}\int_{\mathcal O_{\zeta_\epsilon}}|\bu^N(t)|^2\dx+\tfrac{1}{2}\int_\Gamma|\partial_t\eta^N(t)|^2\dy$: it holds
\begin{align}\label{eq:energyN}
\begin{aligned}
\tfrac{1}{2}\int_{\mathcal O_{\zeta_\epsilon}} &|\bu^N(t)|^2\dx
+\int_0^t\int_{\mathcal O_{\zeta_\epsilon}}|\nabx\bu^N|^2\dx\ds
+
\epsilon
\int_0^t\int_\Gamma
|\partial_s\Dely\eta^N|^2\dy\ds
\\&+
\int_\Gamma\Big(\tfrac{1}{2}|\partial_t\eta^N(t)|^2
+
\tfrac{1}{2}
|\Dely\eta^N(t)|^2
+\epsilon |\naby^3\eta^N(t)|^2\Big)\dy
\\
&
 =
\tfrac{1}{2}\int_{\mathcal O_{\eta_0}}|\bu_0|^2\dx
+
\int_\Gamma\Big(\tfrac{1}{2}|\eta_1|^2
+\tfrac{1}{2}|\Dely\eta_0|^2+ \epsilon|\naby^3\eta_0|\Big)\dy
\end{aligned}
\end{align}
a.s. for a.a. $t\in I$. From the above, we then obtain
\begin{align}
&\sup_I\Big(\Vert \eta^N \Vert_{W^{2,2}(\Gamma )}^{2} 
+\epsilon
\Vert \eta^N \Vert_{W^{3,2}(\Gamma )}^{2}
\Big) \lesssim1,\label{GalerkinConvEta1Np}
\\
&\sup_I\Vert\partial_t\eta^N \Vert_{L^2(\Gamma )}^{2}\lesssim 1,\label{GalerkinConvEtaNp}
\\
&\epsilon\int_I\Vert\partial_t\eta^N \Vert_{W^{2,2}(\Gamma )}^{2}\dt\lesssim 1,
\\
&\sup_I\Vert \bu^N \Vert_{L^2(\mathcal O_{\zeta_\epsilon})}^{2}\lesssim 1,
\label{GalerkinVolNp1}
\\
&\int_I\Vert\nabx \bu^N \Vert_{L^{2}(\mathcal O_{\zeta_\epsilon})}^2\dt\lesssim1. \label{GalerkinVolNp2}
\end{align}
In addition, for any $s\in(0,\frac{1}{2})$, it follows from $\bu^N\circ\bm{\varphi}_{\zeta_\epsilon } =\bn\partial_t\eta^N $, \eqref{GalerkinVolNp2} and the trace theorem that
\begin{equation}
\begin{aligned}
\label{GalerkinConvEta2Np}
\int_I\Vert \partial_t\eta^N \Vert_{W^{s,2}(\Gamma)}^2\dt
\lesssim 1
\end{aligned}
\end{equation}
holds. 

\subsection{Tightness of $\partial_t\eta^N$}
\label{ssec:comp}
The effort of this subsection is to prove tightness of the law of
$\partial_t\eta^N$ on $L^2$ in order to pass to the limit in the stochastic integral. 
 We define the projection $\mathcal P^N$ and the extension $\mathscr F_N^{\zeta_\epsilon}$ (for a given $\zeta:\omega\rightarrow(-L,L)$) 
\begin{align*}
\mathcal P^N b=\sum_{k=1}^N\alpha_k(b)\iota_{\zeta_\epsilon}w_k,\quad
\mathscr F_N^{\zeta_\epsilon} b=\sum_{k=1}^N\alpha_k(b)\mathcal J_{\zeta_\epsilon}\bm w_k,
\end{align*}
where $\alpha_k(b)=\langle b,\iota_{\zeta_\epsilon} w_k\rangle_{W^{3,2}(\Gamma)}$
 if $w_k=Y_\ell$ for some $\ell\in\mathbb N$ and
$\alpha_k(b)=0$ otherwise.
Obviously, we have $ \mathscr F_N^{\zeta_\epsilon} b\circ\bm{\varphi}_{\zeta_\epsilon }=\bn\mathcal P^N b$ for any $b\in W^{3,2}(\Gamma)$. We have by definition,
\begin{align}\label{eq:PN}
\|\mathcal P^N b\|_{W^{3,2}(\Gamma)}^2\leq\,\|b\|_{W^{3,2}(\Gamma)}^2\quad \forall b\in W^{3,2}(\Gamma).
\end{align}
The eigenvalue equation for the basis vectors implies additionally that
\begin{align}\label{eq:PN2}
\|\mathcal P^N b\|_{L^{2}(\Gamma)}^2\lesssim \|b \|_{L^2(\Gamma)}^2\quad \forall b\in L^2(\Gamma).
\end{align}
By interpolation we obtain an estimate on $W^{s,2}(\Gamma)$ for any $s\in[0,3]$, that is
\begin{align}
\label{eq:projection}
\|\mathcal P^N b\|_{W^{s,2}(\Gamma)}\lesssim \|b\|_{W^{s,2}(\Gamma)}.
\end{align} 
Finally, for ${\zeta_\epsilon}\in W^{3,2}(\Gamma)$ with $\|\zeta_\epsilon\|_{L^\infty(\Gamma)}<\alpha$ for $\alpha\in(0,L)$ we have
\begin{align}\label{eq:FN}
\begin{aligned}
\|\mathscr F^{\zeta_\epsilon}_N b\|_{W^{s+1/2,2}(\mathcal O\cup S_{\alpha})}^2
&= 
\bigg\|\sum_{k=1}^N\alpha_k(b)\mathcal J_{\zeta_\epsilon}\bm w_k
\bigg\|_{W^{s+1/2,2}(\mathcal O\cup S_{\alpha})}^2 
\\
&\lesssim \bigg\|\sum_{k=1}^N\alpha_k(b)\iota_{\zeta_\epsilon} w_k
\bigg\|_{W^{s,2}(\Gamma)}^2
\\&
\leq\,\|b\|_{W^{s,2}(\Gamma)}^2  
\end{aligned}
\end{align}
for all $b\in W^{s,2}(\Gamma)$ and
for any $s\in[0,3]$. Here we used that $\sum_{k=1}^N\alpha_k(b)\bm w_k$ solves the
homogeneous Stokes problem with boundary datum $\sum_{k=1}^N\alpha_k(b)w_k\mathbf n$ in $\mathcal O$ and well-known elliptic estimates (see \cite[Chapter IV]{MR2808162}).
Similarly, we obtain for $\zeta_\epsilon^1,\zeta_\epsilon^2\in W^{1,2}(\Gamma)$
\begin{align}\label{eq:FN2}
\begin{aligned}
\|\mathscr F^{\zeta_\epsilon^1}_N b&-\mathscr F^{\zeta_\epsilon^2}_N b\|_{L^{2}(\mathcal O\cup S_{\alpha})}^2
\\&=\bigg\|(\mathcal J_{\zeta_\epsilon^1}-\mathcal J_{\zeta_\epsilon^2})\sum_{k=1}^N\alpha_k(b)\bm w_k
\bigg\|_{L^{2}(\mathcal O\cup S_{\alpha})}^2
\\&\lesssim \|(\zeta_\epsilon^1,\zeta_\epsilon^2)\|^4_{W^{1,\infty}(\Gamma)} \|\zeta_\epsilon^1-\zeta_\epsilon^2\|_{L^2(\Gamma)}^2\bigg\|\sum_{k=1}^N\alpha_k(b)\bm w_k
\bigg\|_{W^{1,\infty}(\mathcal O)}^2
\\&
+ \|(\zeta_\epsilon^1,\zeta_\epsilon^2)\|^2_{W^{1,\infty}(\Gamma)}\|\naby(\zeta_\epsilon^1- \zeta_\epsilon^2)\|_{L^2(\Gamma)}^2\bigg\|\sum_{k=1}^N\alpha_k(b)\bm w_k
\bigg\|_{L^{\infty}(\mathcal O)}^2
\\
&\lesssim  (1+\|(\zeta_\epsilon^1,\zeta_\epsilon^2)\|^4_{W^{1,\infty}(\Gamma)})\|\zeta_\epsilon^1-\zeta_\epsilon^2\|_{W^{1,2}(\Gamma)}^2\bigg\|\sum_{k=1}^N\alpha_k(b)\bm w_k
\bigg\|_{W^{3,2}(\mathcal O)}^2
\\
&\lesssim  (1+\|(\zeta_\epsilon^1,\zeta_\epsilon^2)\|^4_{W^{1,\infty}(\Gamma)})\|\zeta_\epsilon^1-\zeta_\epsilon^2\|_{W^{1,2}(\Gamma)}^2\bigg\|\sum_{k=1}^N\alpha_k(b)w_k
\bigg\|_{W^{3,2}(\Gamma)}^2
\\
&\leq\, (1+\|(\zeta_\epsilon^1,\zeta_\epsilon^2)\|^4_{W^{1,\infty}(\Gamma)})\|\zeta_\epsilon^1-\zeta_\epsilon^2\|_{W^{1,2}(\Gamma)}^2\|b\|_{W^{3,2}(\Gamma)}^2 
\end{aligned}
\end{align}
for all $ b\in W^{3,2}(\Gamma)$.
%, where the estimate depends on the $W^{1,\infty}(\Gamma)$-norm of $\zeta_\epsilon^1$ and $\zeta_\epsilon^2$.
Following \cite{lengeler2014weak}, we write
\begin{align}\label{eq:convrhouN}\begin{aligned}
\int_{I}\int_{\mathcal O_{\zeta_\epsilon}}|\bu^N|^2\dxt&
+\int_{I}\int_\Gamma|\partial_t\eta^N|^2\,\dy \dt\\&
=\int_{I}\int_{\mathcal O\cup S_{\alpha}}\mathbb I_{\mathcal O_{\zeta_\epsilon}}
\bu^N\cdot\mathscr F_N^{\zeta_\epsilon} \partial_t\mathcal \eta^N\dxt+\int_{I}\int_\Gamma|\partial_t\eta^N|^2\,\dy \dt\\
&+\int_{I}\int_{\mathcal O\cup S_{\alpha}}
\mathbb I_{\mathcal O_{\zeta_\epsilon}}
\bu^N\cdot(\bu^N-\mathscr F_N^{\zeta_\epsilon}\partial_t\mathcal \eta^N)\dxt.
\end{aligned}
\end{align}
%noting that $\mathcal P^N\eta^N=\eta^N$.
We consider the space $X:=L^2(\Gamma)\times W^{-s,2}(\mathcal{O}\cup S_{\alpha})$ with  $s\in(0,1/4)$.  
In order to apply Theorem~\ref{thm:auba} yielding tightness of the corresponding laws
we need to equip $L^2(I;X'\times X)$ with an unconventional topology which we denote by  $\tau_\sharp$
and define the convergence $\rightarrow^\sharp$  as follows. 
 We say that
$$((\zeta^N, \bv^N),(\xi^N,\mathbf{w}^N))
\rightarrow^\sharp((\zeta, \mathbf v),(\xi, {\mathbf w}))\,\,\text{in}\,\,L^2(I;X'\times X)$$ provided
that
$$((\zeta^N, \bv^N),(\xi^N,\mathbf{w}^N))
 \rightharpoonup ((\zeta, \mathbf v),(\xi, {\mathbf w}))\,\,\text{in}\,\,L^2(I;X'\times X)$$
and it holds
\begin{align}\label{eq:tau}
\begin{aligned} 
\int_I\langle \bv^N,\mathbf{w}^N\rangle_{W^{s,2},W^{-s,2}}\dt
&+\int_I\int_\Gamma\zeta^N\,\xi^N\dy\dt\\
&\longrightarrow \int_I\langle \mathbf v,{\mathbf w}\rangle_{W^{s,2},W^{-s,2}}\dt
+\int_I\int_\Gamma\zeta\,\xi\dy\dt.
\end{aligned}
\end{align}
Since this topology is finer than the weak topology on $L^2(I;X'\times X)$ it is clear
 that $(L^2(I;X'\times X),\tau_\sharp)$ is a quasi-Polish
space such that Jakubowski's version of the Skorokhod representation theorem
applies. We obtain the following result concerning tightness.
\begin{lemma}\label{lemma:fg}
The laws of
\begin{align*}
((\partial_t\eta^N,\mathbb I_{\mathcal O_{\zeta_\epsilon}}\mathbf u^N),
(\partial_t\eta^N,\mathscr F_N^{\zeta_\epsilon}\partial_t\eta^N))\quad\text{and}\quad
((\partial_t\eta_n,\mathbb I_{\mathcal O_{\zeta_\epsilon}}\mathbf u^N),
(0,\mathbf u^N-\mathscr F_N^{\zeta_\epsilon}\partial_t\eta^N))
\end{align*}
on $(L^2(I;X'\times X),\tau_\sharp)$ are tight.
\end{lemma}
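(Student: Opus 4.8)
The plan is to produce, for every $\delta>0$, a set $K_\delta$ that is sequentially compact in $(L^2(I;X'\times X),\tau_\sharp)$ and carries at least $1-\delta$ of the mass of all four laws; since $\tau_\sharp$ is metrizable on bounded subsets of $L^2(I;X'\times X)$, this is exactly what tightness in the quasi-Polish setting requires. The first observation to use is that the energy identity \eqref{eq:energyN} is in fact \emph{deterministic}: passing the Stratonovich transport term to It\^o form, the martingale contribution drops out after integration by parts (using $\divy\bsigma=0$) and the It\^o correction cancels the quadratic variation, so \eqref{GalerkinConvEta1Np}--\eqref{GalerkinConvEta2Np} hold with constants depending only on the data and on $\epsilon,\bsigma,\alpha,L$ and $\|\zeta\|_{L^\infty(\Omega;C(\overline I\times\Gamma))}$, i.e.\ uniformly in $N$ and a.s.\ in $\omega$. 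From this I would read off uniform bounds: $\partial_t\eta^N$ in $L^\infty(I;L^2(\Gamma))\cap L^2(I;W^{\sigma,2}(\Gamma))$ for a fixed $\sigma\in(0,\tfrac12)$; $\mathbb I_{\mathcal O_{\zeta_\epsilon}}\bu^N$ in $L^\infty(I;L^2(\mathcal O\cup S_\alpha))\cap L^2(I;W^{s,2}(\mathcal O\cup S_\alpha))$ (extension by zero of a $W^{1,2}$ field across the smooth interface $\partial\mathcal O_{\zeta_\epsilon}$, admissible because $s<\tfrac12$); $\mathscr F_N^{\zeta_\epsilon}\partial_t\eta^N$ in $L^2(I;W^{\sigma+1/2,2}(\mathcal O\cup S_\alpha))$ by \eqref{eq:FN}; and $\mathbb I_{\mathcal O_{\zeta_\epsilon}}\bu^N-\mathscr F_N^{\zeta_\epsilon}\partial_t\eta^N$ in $L^2(I;W^{1,2}(\mathcal O\cup S_\alpha))$, since both pieces carry the trace $\iota_{\zeta_\epsilon}\partial_t\eta^N\,\bn$ on $\partial\mathcal O_{\zeta_\epsilon}$ and hence their difference lies in $W^{1,2}_0(\mathcal O_{\zeta_\epsilon})$ and extends by zero with no jump.

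Next I would establish a uniform-in-$N$ modulus of continuity in time: for some $\gamma\in(0,\tfrac12)$ and a large $m$, a bound $\mathbb E\,\|f\|_{C^\gamma(\overline I;W^{-m,2})}^2\lesssim 1$ with $f$ each of $\partial_t\eta^N$, $\mathbb I_{\mathcal O_{\zeta_\epsilon}}\bu^N$, $\mathscr F_N^{\zeta_\epsilon}\partial_t\eta^N$ and $\mathbb I_{\mathcal O_{\zeta_\epsilon}}\bu^N-\mathscr F_N^{\zeta_\epsilon}\partial_t\eta^N$. These follow by testing \eqref{galerkinweak1} with the basis functions and integrating from $t$ to $t+\rho$: every drift term (including $\tfrac12(\bsigma\cdot\naby)(\bsigma\cdot\naby)\partial_t\eta^N$ and the $\epsilon$-terms) is bounded in $L^\infty(I;W^{-m,2})$ by Step~1, and the stochastic contribution $\int_t^{t+\rho}((\bsigma\cdot\naby)\partial_t\eta^N)\,\iota_{\zeta_\epsilon}w_j\,\dd B_s$ is controlled in expectation by Burkholder--Davis--Gundy, which is what produces the H\"older exponent $\gamma<\tfrac12$. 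One works throughout with the combined quantity $\int_{\mathcal O_{\zeta_\epsilon}}\bu^N\cdot\mathcal J_{\zeta_\epsilon(t)}\bm w_j\,\dx+\int_\Gamma\partial_t\eta^N\,\iota_{\zeta_\epsilon}w_j\,\dy$ (in which velocity and shell increments are coupled consistently with the trace condition), uses the time-smoothness of $\mathcal J_{\zeta_\epsilon(t)}$ and $\iota_{\zeta_\epsilon(t)}$ (recall $\zeta_\epsilon$ is a space--time mollification), and relies on the rewriting of $\partial_t\mathcal J_{\zeta_\epsilon}$ from Section~\ref{sec:weak} to make the velocity increment rigorous. I expect this to be the main obstacle, because of the need to keep all constants independent of $N$ on the moving domain.

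Given Steps~1--2, fix $R>0$ and let $K_R$ be the set of all $((\zeta,\bv),(\xi,\mathbf w))\in L^2(I;X'\times X)$ obeying the bounds above with constant $R$ in the stated spaces, with the structural relations of the two cases built in ($\xi=\zeta$ in the first, $\xi=0$ with $\mathbf w$ bounded in $L^2(I;W^{1,2}(\mathcal O\cup S_\alpha))$ in the second). I claim $K_R$ is sequentially $\tau_\sharp$-compact. A sequence in $K_R$ is bounded in the reflexive separable space $L^2(I;X'\times X)$, hence has a weakly convergent subsequence, and the task is to upgrade this to \eqref{eq:tau}. The $\Gamma$-pairing is harmless: identically zero in the second case, and in the first case $\int_I\int_\Gamma|\partial_t\eta^N|^2$ converges because $L^2(I;W^{\sigma,2}(\Gamma))\cap C^\gamma(\overline I;W^{-m,2}(\Gamma))\hookrightarrow\hookrightarrow L^2(I;L^2(\Gamma))$ by the Aubin--Lions--Simon lemma on the fixed domain $\Gamma$. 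For the $\mathcal O\cup S_\alpha$-pairing $\int_I\langle\bv^N,\mathbf w^N\rangle_{W^{s,2},W^{-s,2}}$ I would invoke Theorem~\ref{thm:auba} with $X=W^{-s,2}(\mathcal O\cup S_\alpha)$, $X'=W^{s,2}(\mathcal O\cup S_\alpha)$, $Z'=W^{-s,2}(\mathcal O\cup S_\alpha)$, $f_n=\mathbf w^N$ and $g_n=\bv^N$: boundedness (a) and $g_n\in L^\infty(I;Z')$ hold by Step~1; approximability (b) is trivial since $\mathbf w^N$ is already bounded in $L^2(I;W^{\sigma+1/2,2})$ (resp.\ $L^2(I;W^{1,2})$), which embeds in $L^1(I;Z)\cap L^2(I;X)$, so one takes $f_{n,\kappa}=f_n$; equicontinuity (c) follows from the $C^\gamma(\overline I;W^{-m,2})$-bound of Step~2 on $g_n$ after interpolating $W^{-s,2}$ between $W^{-m,2}$ and $W^{s,2}$, with $A_n(t)=\|\mathbf w^N(t)\|_{W^{s,2}}\in L^2(I)$ uniformly; and (d) is the compact embedding $W^{s,2}\hookrightarrow\hookrightarrow W^{-s,2}$. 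Theorem~\ref{thm:auba} then gives convergence of the pairing along a further subsequence, and applying this to every subsequence (to the common weak limit) yields \eqref{eq:tau} for the whole sequence; since the defining bounds are lower semicontinuous under weak convergence, the limit lies in $K_R$, so $K_R$ is sequentially $\tau_\sharp$-compact. (Alternatively, as the extensions are bounded in strictly better spaces here, one may replace the invocation of Theorem~\ref{thm:auba} by a direct weak--strong argument, using that $\mathbf w^N$ is relatively compact in $L^2(I;W^{-s,2})$.)

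Finally, by Step~1 each of the four sequences lies, up to a $\mathbb P$-null set, in the $L^2$-balls entering the definition of $K_R$, and by Step~2 together with Chebyshev's inequality the probability that the relevant $C^\gamma(\overline I;W^{-m,2})$-norm exceeds $R$ is $\lesssim R^{-2}$ uniformly in $N$; hence the laws concentrate on $K_R$ up to an error $\lesssim R^{-2}$ uniformly in $N$, which gives the asserted tightness once $R$ is taken large enough.
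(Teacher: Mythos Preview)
Your overall strategy---deterministic energy bounds, then a time--regularity estimate in a negative space, then a compactness argument for the $\tau_\sharp$ pairing---is reasonable in spirit, but it diverges from the paper's proof and has a genuine gap at the crucial point.

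The paper does \emph{not} try to prove a uniform $C^\gamma(\overline I;W^{-m,2})$ bound on any of the four functions, nor does it separate the shell pairing on $\Gamma$ from the fluid pairing on $\mathcal O\cup S_\alpha$. Instead it applies Theorem~\ref{thm:auba} directly to the coupled object $g_N=(\partial_t\eta^N,\mathbb I_{\mathcal O_{\zeta_\epsilon}}\bu^N)$ with $X=L^2(\Gamma)\times W^{-s,2}(\mathcal O\cup S_\alpha)$, and constructs a \emph{nontrivial} approximant
\[
f_{N,\kappa}(t)=\big(\mathcal P^N((\partial_t\eta^N(t))_\kappa),\,\mathscr F_N^{\zeta_\epsilon(t)}(\mathcal P^N((\partial_t\eta^N(t))_\kappa))\big),
\]
where $(\cdot)_\kappa$ is a spatial mollification and $\mathcal P^N$ is the projection onto $\mathrm{span}\{\iota_{\zeta_\epsilon}w_k\}_{k\leq N}$. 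The point of $\mathcal P^N$ is that $f_{N,\kappa}$ then lies in the admissible Galerkin test space, so \eqref{galerkinweak1} can be used verbatim to estimate $\langle g_N(t)-g_N(s),f_{N,\kappa}(t)\rangle$ for condition (c). The point of the mollification is that $\|f_{N,\kappa}\|_{W^{3,2}}\leq c(\kappa)\|\partial_t\eta^N\|_{L^2}$, which is what makes every drift term in \eqref{galerkinweak1} controllable by $c(\kappa)|t-s|^{1/2}$. The stochastic integral is handled not via BDG but by Markov's inequality combined with It\^o isometry, yielding directly the probabilistic smallness required for tightness. For the second law the paper regularises $\bu^N$ through the basis (replacing $\bm w_i$ by a smoothed $\bm w_i^\kappa$) to preserve the zero trace of $\bu^N-\mathscr F_N^{\zeta_\epsilon}\partial_t\eta^N$, so that only fluid test functions appear and the noise disappears from condition~(c).

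Your Step~2 is where the argument breaks. Testing \eqref{galerkinweak1} against the basis functions gives increments of $\int_{\mathcal O_{\zeta_\epsilon}}\bu^N\cdot\mathcal J_{\zeta_\epsilon}\bm w_j+\int_\Gamma\partial_t\eta^N\iota_{\zeta_\epsilon}w_j$ for $j\leq N$ only; this does \emph{not} yield a $W^{-m,2}$ H\"older bound on $\mathbb I_{\mathcal O_{\zeta_\epsilon}}\bu^N$ or on $\partial_t\eta^N$ separately, and not even on the coupled pair against arbitrary $W^{m,2}$ test functions, with constants uniform in~$N$. An arbitrary $\bm\phi\in W^{m,2}(\mathcal O\cup S_\alpha)$ is neither divergence--free nor of the form $\mathcal J_{\zeta_\epsilon}\bm w_j$, so the equation gives no information against it; and since $\bu^N$ lives in the span of $\{\mathcal J_{\zeta_\epsilon(t)}\bm w_i\}_{i\leq N}$ (a time--dependent, non--orthogonal family on $\mathcal O_{\zeta_\epsilon}$), projecting $\bm\phi$ and controlling the remainder uniformly in $N$ is precisely the difficulty. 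Without Step~2, your Step~3 collapses: the choice $f_{n,\kappa}=f_n$ only verifies condition~(c) of Theorem~\ref{thm:auba} once you already know $g_n\in C^{\gamma'}(\overline I;W^{-s,2})$ uniformly, which is what Step~2 was supposed to deliver. The paper's construction of $f_{N,\kappa}$ via $\mathcal P^N$ is exactly the device that replaces your missing Step~2: it forces the test function into the Galerkin space so that condition~(c) can be read off from \eqref{galerkinweak1} without ever needing H\"older continuity of $g_N$ in a full negative Sobolev norm.
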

\begin{proof}
According to Theorem \ref{thm:auba} we first need boundedness
 in $L^2(I;X'\times X)$.
By \eqref{GalerkinConvEta1Np}--\eqref{GalerkinConvEta2Np} and the properties of the projection and extension operators
above this follows immediately (even uniformly in probability). Note that the extension by zero is a bounded operator on $W^{s,2}$ for $s<\frac{1}{4}$.
For $(b)$  we observe that we may assume that
 a regularizer $b\mapsto (b)_\kappa$ exists such that for any $s,a\in \R$
\begin{align}
\label{eq:molly}\|b-(b)_\kappa\|_{W^{a,2}(\Gamma)}\lesssim \kappa^{s-a}\|b\|_{W^{s,2}(\Gamma)},\quad b\in W^{s,2}(\Gamma).
\end{align}
The estimate is well known for $a,s\in\mathbb N_0$, while the general case follows by interpolation and duality. 
Next we introduce the mollification operator on $\partial_t\eta^N$ by considering for $\kappa>0$
 and $N\in\mathbb N$,
$
f_{N}(t):=(\mathcal P^N(\partial_t\eta^N(t)),
\mathscr F_N^{\zeta_\epsilon(t)}(\mathcal P^N(\partial_t\eta^N(t))))
$
and set
\[
f_{N,\kappa}(t):=(\mathcal P^N((\partial_t\eta^N(t))_\kappa),
\mathscr F_N^{\zeta_\epsilon(t)}(\mathcal P^N((\partial_t\eta^N(t))_\kappa))).
\]
We find by the continuity of the mollification operator from \eqref{eq:molly}, the continuity of the projection
operator from \eqref{eq:projection} and the estimate for the extension operator
\eqref{eq:FN} that for a.e.\ $t\in I$ and $s<s_0<1/2$
 \begin{align}\label{eq:fNkappa}
 \|f_{N,\kappa}-f_N\|_{L^2(\Gamma)\times W^{s,2}(\mathcal O\cup S_{L/2})}
\lesssim \kappa^{s_0-s}\|\partial_t\eta^N\|_{W^{s_0,2}(\Gamma)},
 \end{align}
 which can be made arbitrarily small in $L^2$ by choosing $\kappa$ appropriately, 
cf. \eqref{GalerkinConvEta2Np}.
% This allows to deduce the $L^1(I;Z)$ bound (for $s=s_y$) as well as the convergence property (choosing $s=s_x$).
Similarly, we have
 \begin{align*}\label{eq:fNkappa}
 \|f_{N,\kappa}\|_{W^{1,2}(\Gamma)\times W^{1,2}(\mathcal O\cup S_{L/2})}
\lesssim \kappa^{-1}\|\partial_t\eta^N\|_{L^{2}(\Gamma)}.
 \end{align*}%\todo{what is $q$?}
Clearly, if $f_N\rightharpoonup f$ in $L^2(I;X)$ then
 we can deduce a converging subsequence such that $f_{N,\kappa}\weakto f_{\kappa}$ (for some $ f_{\kappa}$) in 
$L^2(I;X)$ for any $\kappa>0$,
 which implies (b).

 For (c) we have to control
$\langle g_N(t)-g_N(s),f_{N,\kappa}(t) \rangle$, 
where $g_N(t):=(\partial_t\eta^N(t),\mathbb I_{\mathcal O_{\zeta_\epsilon(t)}}\mathbf u^N(t))$
and hence we decompose
 \begin{align*}
&\langle g_N(t)-g_N(s),f_{N,\kappa}(t)\rangle
\\
&=\big(\langle g_N(t),(\mathcal P^N((\partial_t\eta^N(t))_\kappa),
\mathscr F_N^{\zeta_\epsilon(t)}(\mathcal P^N((\partial_t\eta^N(t))_\kappa)))\rangle
\\
&\quad-\langle g_N(s),(\mathcal P^N((\partial_t\eta^N(t))_\kappa),\mathscr F_N^{\zeta_\epsilon(s)}
(\mathcal P^N((\partial_t\eta^N(t))_\kappa)))\rangle \big)
\\
&\quad +\langle g_N(s),(0,\mathscr F_N^{\zeta_\epsilon(s)}(\mathcal P^N((\partial_t\eta^N(t))_\kappa))
-\mathscr F_N ^{\zeta_\epsilon(t)}(\mathcal P^N((\partial_t\eta^N(t))_\kappa))\rangle
\\&=:(I)+(II).
 \end{align*} 
We begin estimating $(II)$ to find that
 \begin{align*}
(II)& = \int_{\mathcal O_{\zeta_\epsilon(s)}} \bu^N(s)
\cdot \Big(\mathscr F_N^{\zeta_\epsilon(s)}(\mathcal P^N((\partial_t\eta^N(t))_\kappa))
-\mathscr F_N ^{\zeta_\epsilon(t)}(\mathcal P^N((\partial_t\eta^N(t))_\kappa)\big)\dx
\\&
\lesssim\|\bu^N(s)\|_{L^2(\mathcal O_{\zeta_\epsilon(s)})} (1+\|\zeta_\epsilon\|^2_{L^\infty(I;W^{1,\infty}(\Gamma))})\|\zeta_\epsilon(t)-\zeta_\epsilon(s)\|_{W^{1,2}(\Gamma)}
\\&
\qquad
\times\|\mathcal P^N((\partial_t\eta^N(t))_\kappa)\|_{W^{3,2}(\Gamma)}
%\\
%&
%\lesssim|t-s|\|\bu^N(s)\|_{L^2(\mathcal O_{\zeta_\epsilon(s)})} (1+\|\zeta_\epsilon\|^2_{L^\infty(I;W^{1,\infty}(\Gamma))})\|\zeta_\epsilon\|_{W^{1,\infty}(I;W^{1,2}(\Gamma))}
%\|\mathcal P^N((\partial_t\eta^N(t))_\kappa)\|_{W^{3,2}(\Gamma)}
 \end{align*}
using \eqref{eq:FN2}.
By \eqref{eq:molly} and \eqref{eq:projection} the last term can be estimated by
\begin{align*}
\|\mathcal P^N((\partial_t\eta^N(t))_\kappa)\|_{W^{3,2}(\Gamma)}
\leq c\|(\partial_t\eta^N(t))_\kappa\|_{W^{3,2}(\Gamma)}\leq c\kappa^{-3}\|\partial_t\eta^N(t)\|_{L^2(\Gamma)},
\end{align*}
which is bounded by \eqref{GalerkinConvEtaNp}.
 %(and boundedness of )
Due to this as well as
%he embedding 
%\begin{align*}
%W^{1,2}(I;L^2(\Gamma))\cap L^{2}(I;W^{3,2}(\Gamma))\hookrightarrow W^{2/3,2}(I;W^{1,2}(\Gamma))
%\hookrightarrow C^{1/6}(\overline I;W^{1,2}(\Gamma))
%\end{align*}
%as well as
 \eqref{GalerkinConvEtaNp} and \eqref{GalerkinVolNp1}
we further  have that
 \begin{align*}
|(II)|
&
\leq c(\kappa)|t-s| (1+\|\zeta_\epsilon\|^3_{W^{1,\infty}(I;W^{1,\infty}(\Gamma))})\\
&
\leq c(\kappa,\epsilon)|t-s| (1+\|\zeta\|^3_{C(\overline I\times\Gamma)}).
 \end{align*}
In teh last step we used standard properties of the mollification recalling that $\epsilon$ is a fixed parameter on this level. It follows by Markov's inequality for $K>0$
\begin{align*}
\mathbb P\big(|t-s|^{-1}|(II)|\geq K\big)
&\leq \,\frac{c(\kappa,\epsilon)}{K}\mathbb E\Big[1+\|\zeta\|^3_{C(\overline I\times\Gamma)}\Big]\leq \,\frac{c(\kappa,\epsilon)}{K},
\end{align*}
which can be amde arbitrarily small for large $K$ provided $\zeta\in L^3(\Omega;C(\overline I\times\Gamma))$.
%\begin{align*}
%|(II)|\lesssim |t-s|^{1/6}\|\mathcal P^N((\partial_t\eta^N(t))_\kappa)\|_{W^{3,2}(\Gamma)}.
%\end{align*}
% such that 
%we conclude
%$$(II)\leq\,c(\kappa)|t-s|^{1/6}.$$
  The term $(I)$ is estimated using the test function
 $f_{N,\kappa}$ obtaining
\begin{align}  
\nonumber
\int_t^s&
(I) \dd r 
\\=&
-\int_t^s\int_{\Gamma} 
%\big(\partial_r\eta^N \, \partial_r(\mathcal P^N((\partial_r\eta^N)_\kappa)
 \Dely (\mathcal P^N((\partial_t\eta^N(t))_\kappa) \Dely\eta^N
 \dy\dd r
 \nonumber
 \\\nonumber
 &-\epsilon
\int_t^s\int_{\Gamma} 
\Big(
 \Dely(\mathcal P^N((\partial_t\eta^N(t))_\kappa) \partial_t\Dely\eta^N 
 +
 \naby^3 (\mathcal P^N((\partial_t\eta^N(t))_\kappa) :\naby^3\eta^N \Big)\dy\dd r
\\\nonumber&+
\int_t^s\int_{\Gamma} \bigg(\frac{1}{2}\bn_{\zeta_\epsilon} \cdot \bn^\top (\mathcal P^N((\partial_t\eta^N(t))_\kappa)  \partial_t{\zeta_\epsilon} \,\partial_t\eta^N  \,
 \vert\det(\naby\bm{\varphi}_{\zeta_\epsilon})\vert
\bigg)\dy\dd r
 \\\nonumber
&+
\int_t^s
 \int_{\mathcal{O}_{\zeta_\epsilon}}\bu^N\cdot \partial_r \mathscr F_N ^{\zeta_\epsilon(r)}(\mathcal P^N((\partial_t\eta^N(t))_\kappa))
\dx\dd r  \\\nonumber&-\int_t^s \int_{\mathcal{O}_{\zeta_\epsilon}}
\frac{1}{2}((\bv_\epsilon\cdot\nabx)\bu^N)\cdot \mathscr F_N ^{\zeta_\epsilon}(\mathcal P^N((\partial_r\eta^N(t))_\kappa))  \dx\dd r 
\\\nonumber
&+
\int_t^s \int_{\mathcal{O}_{\zeta_\epsilon}}
\frac{1}{2}((\bv_\epsilon\cdot\nabx)\mathscr F_N ^{\zeta_\epsilon}(\mathcal P^N((\partial_t\eta^N(t))_\kappa))) \cdot \bu^N
\dx\dd r  \\\nonumber&-\int_t^s \int_{\mathcal{O}_{\zeta_\epsilon}}\nabx \bu^N:\nabx \mathscr F_N ^{\zeta_\epsilon}(\mathcal P^N((\partial_t\eta^N(t))_\kappa))  \dx\dd r  
\\\nonumber&+
\frac{1}{2}
\int_t^s 
 \int_{\Gamma}  ((\bsigma\cdot\naby)(\bsigma\cdot\naby)\partial_t \eta^N)  (\mathcal P^N((\partial_t\eta^N(t))_\kappa) \dy\dd r
\\%\label{galerkinweak1}
&+
\int_t^s
  \int_{\Gamma}  ((\bsigma\cdot\naby)\partial_t \eta^N) (\mathcal P^N((\partial_t\eta^N(t))_\kappa)\dy\dd B_r.
\end{align} 
Due to \eqref{eq:molly}, \eqref{eq:projection} and
\eqref{eq:FN} and the uniform estimates \eqref{GalerkinConvEta1Np}--\eqref{GalerkinConvEta2Np},
it is clear that all terms can be estimated by $c(\kappa, \epsilon)|t-s|^{1/2}(1+\|\zeta\|_{C(\overline I\times \Gamma)}+\|\nabx\mathbf u^N\|_{L^2(I\times \mathcal O_{\zeta_\epsilon})})$
except for the last one.
Here we have by Markov's inequality and It\^o-isometry (using also $\divy \bsigma=0$)
\begin{align*}
\mathbb P\bigg(|t-s|^{-1/2}\bigg|&\int_t^s\int_{\Gamma}  
( (\bsigma\cdot\naby)\partial_t \eta^N ) 
\mathcal P^N(\partial_t\eta^N(t))_\kappa \dy\dd B_r\bigg|\geq K\bigg)
\\
&\leq\,\frac{1}{K^2}\mathbb E\bigg||t-s|^{-1/2}\int_t^s\int_{\Gamma}  
 \partial_t \eta^N ((\bsigma\cdot\naby)\mathcal P^N(\partial_t\eta^N(t))_\kappa) \dy\dd B_r\bigg|^2
\\
&=\,\frac{1}{K^2}\mathbb E\bigg[|t-s|^{-1}\int_t^s\bigg|\int_{\Gamma}  
 \partial_t \eta^N ((\bsigma\cdot\naby)\mathcal P^N(\partial_t\eta^N(t))_\kappa)\dy\bigg|^2\,\dd r\bigg]
\\
&\lesssim \,\frac{1}{K^2}\mathbb E\bigg[|t-s|^{-1}\int_t^s\|\partial_t\eta^N\|_{L^2(\Gamma)}
\|\naby\mathcal P^N(\partial_t\eta^N(t))_\kappa\|_{L^2(\Gamma)}\,\dd r\bigg]
\\
&\lesssim \,\frac{\kappa^{-1}}{K^2}\mathbb E\bigg[|t-s|^{-1}
\int_t^s\|\partial_t\eta^N(r)\|_{L^2(\Gamma)}\|\partial_t\eta^N(t)\|_{L^2(\Gamma)}\,\dd r\bigg]\lesssim \,\frac{\kappa^{-1}}{K^2}
\end{align*}
using \eqref{GalerkinConvEtaNp}.
This can be made arbitrarily small for $K$ large.
Now we set $Z:=W^{s_0,2}(\Gamma)\times W^{s_0,2}(\mathcal O\cup S_\alpha)$, where $s_0\in(s,\frac{1}{4})$. Noticing that property (d) from Theorem \ref{thm:auba} 
follows by the usual compactness in (negative) Sobolev spaces, we conclude tightness of the law of 
$((\partial_t\eta^N,\mathbb I_{\mathcal O_{\zeta_\epsilon}}\mathbf u^N),
(\partial_t\eta^N,\mathscr F_N^{\zeta_\epsilon}\partial_t\eta^N))$
on $(L^2(I;X'\times X),\tau_\sharp)$.

The tightness for $((0,\mathbb I_{\mathcal O_{\zeta_\epsilon}}\mathbf u^N),
(0,\mathbf u^N-\mathscr F_N^{\zeta_\epsilon}\partial_t\eta^N))$ follows along the same line, the only difference
being the regularisation of
$$f_N:=\big(0,\mathbf u^N-\mathscr F_N^{\zeta_\epsilon}\partial_t\eta^N\big).$$
While $\mathscr F_N^{\zeta_\epsilon}\partial_t\eta^N$ can be replaced by
$\mathscr F_N ^{\zeta_\epsilon(s)}(\mathcal P^N((\partial_t\eta^N(t))_\kappa))$
as above, we need to regularise $\mathbf u^N$ accordingly to preserve the homogeneous boundary conditions
of $f_N$. Recalling the definition of $\bm w_i$ from \eqref{eq:bwi}
we define $\mathbf X_i^\kappa\in W^{1,2}_{0,\divx}(\mathcal O)$ as a spatial regularisation of $\mathbf X_i$
and $\mathbf Y_i^\kappa$ as the solution to the Stokes problem with boundary datum
$Y_i^\kappa\bn$ (which inherits the regularity of $Y_i^\kappa$). Then we set
\begin{align}\label{eq:bwikappa}
\bm{w}_i^\kappa  = \left\{
  \begin{array}{lr}
    \mathbf{X}_i^\kappa & : i \text{ even},\\
    \mathbf{Y}_i^\kappa & : i \text{ odd}.
  \end{array}
\right.
\end{align}
If $\mathbf u^N=\sum_{i=1}^N\alpha_i^N\mathcal J_{\zeta_\epsilon}\bm{w}_i$ we set
\begin{align*}
f_{N,\kappa}=\bigg(0,\sum_{i=1}^N\alpha_i^N\mathcal J_{\zeta_\epsilon}\bm{w}_i^\kappa-\mathscr F_N ^{\zeta_\epsilon}(\mathcal P^N((\partial_t\eta^N(t))_\kappa))\bigg).
\end{align*}
Hence $f_{N,\kappa}$ has zero boundary conditions and thus only the fluid part is seen in
the expression $\langle g_N(t)-g_N(s),f_{N,\kappa}(t)\rangle$ 
(where $g_N:=(\partial_t\eta^N,\mathbb I_{\mathcal O_{\zeta_\epsilon}}\mathbf u^N)$
 as above). In particular, the noise is not seen and we conclude 
\begin{align*}
\langle g_N(t)-g_N(s),f_{N,\kappa}(t)\rangle\leq\, c(\kappa,\epsilon)|t-s|^{1/6}(1+\|\zeta\|_{C(\overline I\times\Gamma)}+\|\nabx\mathbf {u}^N\|_{L^2(I\times \mathcal O_{\zeta_\epsilon})})
\end{align*}
obtaining again the claimed tightness.
\end{proof}

\subsection{Stochastic compactness}\label{sec:compN}
With the bounds from \eqref{GalerkinConvEta1Np}--\eqref{GalerkinConvEta2Np} in hand, we wish to obtain compactness. For this, we define the path space
\begin{align*}
\chi= \chi_{B} 
\times\chi_{{\bu}}^2\times\chi_{\nabla{\bu}}\times\chi_{\eta}\times\chi_\zeta\times \chi_{f,g}^2
\end{align*}
where
\begin{align*}
&
\chi_{B}=C(\overline{I}), 
\quad \chi_{\bu}= 
\big(L^2(I;L^2(\mathcal{O}\cup S_{\alpha})),w\big),\quad \chi_{\nabla{\bu}}= 
\big(L^2(I;L^2(\mathcal{O}\cup S_{\alpha})),w\big),
\\&\chi_{\eta}
=\big(W^{1,\infty}(I;L^{2}(\Gamma)),w^\ast\big)\cap
\big(L^\infty(I;W^{3,2}(\Gamma)),w^\ast\big)\cap \big(W^{1,2}(I;W^{2,2}(\Gamma )),w\big),
\\&\chi_{\zeta}=C(\overline I\times \Gamma),\quad 
\chi_{f,g}=\big(L^2(I;X'\times X),\tau_\sharp\big).
\end{align*}
%with $s\in(0,\frac{1}{2})$. 
From \eqref{GalerkinConvEta1Np}--\eqref{GalerkinConvEta2Np}
(together with Alaoglu-Bourbaki Theorem) we obtain the following.
\begin{lemma}\label{lem:tight} For fixed $\epsilon>0$, the joint law
{\small
\begin{align*}
\bigg\{ \mathcal{L}\bigg[B_t,\mathbb I_{\mathcal O_{\zeta_\epsilon}}\mathbf u^N,\bv,
\mathbb I_{\mathcal O_{\zeta_\epsilon}}\nabx \mathbf u^N, \eta^N,\zeta,\begin{pmatrix}((\partial_t\eta^N,\mathbb I_{\mathcal O_{\zeta_\epsilon}}\mathbf u^N),
(\partial_t\eta^N,\mathscr F^{\zeta_\epsilon}_N\partial_t\eta^N))
\\
((\partial_t\eta^N,\mathbb I_{\mathcal O_{\zeta_\epsilon}}\mathbf u^N),(0,\mathbf u^N-\mathscr F^{\zeta_\epsilon}_N\partial_t\eta^N))\end{pmatrix}\bigg] \bigg];\, N\in \mathbb{N} \bigg\}
\end{align*}
}
is tight on $\chi$.
\end{lemma}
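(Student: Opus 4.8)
\emph{Plan of proof.} Since $\chi$ is a finite product, it suffices to establish tightness of the law of each marginal separately; tightness of the joint law then follows because a finite product of tight families of Borel probability measures is tight for the product topology. I would therefore treat the components one by one.

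The components $\chi_B=C(\overline I)$ and $\chi_\zeta=C(\overline I\times\Gamma)$ need no $N$-uniform argument: $B$ and $\zeta$ do not depend on $N$, so the associated families of laws are constant, and a single Borel probability measure on the Polish space $C(\overline I)$, respectively $C(\overline I\times\Gamma)$, is automatically tight. For the remaining ``weak/weak$^*$'' components I would use Banach--Alaoglu together with the a priori bounds, the crucial observation being that the right-hand side of the energy equality \eqref{eq:energyN} is \emph{deterministic}: consequently \eqref{GalerkinConvEta1Np}, \eqref{GalerkinConvEtaNp}, \eqref{GalerkinVolNp1}, \eqref{GalerkinVolNp2}, together with the $\epsilon$-weighted bound on $\partial_t\eta^N$ in $L^2(I;W^{2,2}(\Gamma))$, hold a.s.\ with an $N$-independent deterministic constant; likewise $\bv$ lies a.s.\ in a fixed ball of $L^2(I;L^2(\mathcal O\cup S_\alpha))$ since $(\zeta,\bv)\in L^\infty(\Omega;C(\overline I\times\Gamma))\times L^\infty(\Omega;L^2(I;L^2(\R^3)))$. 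Thus each of $\mathbb I_{\mathcal O_{\zeta_\epsilon}}\mathbf u^N$, $\bv$, $\mathbb I_{\mathcal O_{\zeta_\epsilon}}\nabx\mathbf u^N$ and $\eta^N$ is contained, a.s.\ and uniformly in $N$, in a fixed bounded set of the relevant space. On bounded sets the weak topology of $L^2(I;L^2(\mathcal O\cup S_\alpha))$ and the weak$^*$ and weak topologies occurring in $\chi_\eta$ are metrizable by the usual separability/reflexivity arguments, and the corresponding closed balls are compact; for $\chi_\eta$ the relevant set is the intersection of three such balls, still compact and metrizable in the topology of $\chi_\eta$. Hence each of these laws is concentrated on a single fixed compact set, so the family is tight.

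The only genuinely delicate component is $\chi_{f,g}=(L^2(I;X'\times X),\tau_\sharp)^2$: because $\tau_\sharp$ demands \emph{strong} convergence of the crossed bracket in \eqref{eq:tau}, boundedness alone is insufficient and one must invoke the Aubin--Lions type criterion of Theorem \ref{thm:auba}. Verifying its hypotheses --- in particular the equicontinuity condition (c), which has to be checked separately for the deterministic terms (via H\"older's inequality and the mapping properties of the projection $\mathcal P^N$ and the extension $\mathscr F_N^{\zeta_\epsilon}$) and for the stochastic term (via the It\^o isometry and Markov's inequality) --- is exactly the content of Lemma \ref{lemma:fg}. I would simply quote Lemma \ref{lemma:fg} for the tightness on $(L^2(I;X'\times X),\tau_\sharp)$ of the laws of $((\partial_t\eta^N,\mathbb I_{\mathcal O_{\zeta_\epsilon}}\mathbf u^N),(\partial_t\eta^N,\mathscr F_N^{\zeta_\epsilon}\partial_t\eta^N))$ and of $((\partial_t\eta^N,\mathbb I_{\mathcal O_{\zeta_\epsilon}}\mathbf u^N),(0,\mathbf u^N-\mathscr F_N^{\zeta_\epsilon}\partial_t\eta^N))$, and then combine this with the previous paragraph and the product argument to conclude. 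The main obstacle has thus already been absorbed into Lemma \ref{lemma:fg}; what remains here is routine soft analysis.
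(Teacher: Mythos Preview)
Your proposal is correct and matches the paper's approach: the paper simply states that Lemma~\ref{lem:tight} follows from the a~priori bounds \eqref{GalerkinConvEta1Np}--\eqref{GalerkinConvEta2Np} together with the Alaoglu--Bourbaki theorem, implicitly relying on Lemma~\ref{lemma:fg} for the $\chi_{f,g}$-component. You have spelled out in detail exactly what the paper leaves implicit --- the product argument, the fact that the energy bounds are deterministic and hence pathwise, and the reduction of the nontrivial component to Lemma~\ref{lemma:fg}.
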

Now we use Jakubowski's version of the Skorokhod
representation theorem, see \cite{jakubow},
to infer the following result (we refer to \cite[Theorem A.1]{on2} for a statement 
which combines Prokhorov's and Skorokhod's theorem for quasi-Polish spaces) 
which one obtains after taking a non-relabelled subsequence.
%\begin{remark}
%If we replace \eqref{energy} by \eqref{N3a} we only get 
%\begin{align}\label{eq:apriori1'}
%\E\bigg[\|\bfu_\ep(t)\|_{L^2_x}^{2n}+\ep\int_0^T\|\bfu_\ep\|_{L^2_x}^{2n-2}\int_{\mt}|\nabla\bfu^\ep|^2\dxt\bigg]\leq\,c(n,\Lambda,\Phi)
%\end{align}
%for a.a. $t$. This is however enough (with $n=1$) to conclude \eqref{eq:p0} (with $p=1$) and \eqref{eq:vt2}. On the other hand we can use Proposition \ref{cor:B3FR} to estimate for any $R>0$
%\begin{align*}
%R\,\p\bigg(\sup_{0<t<T}&\|\bfu_\ep(t)\|_{L^2_x}^{2n}+\ep\int_0^T\|\bfu_\ep\|_{L^2_x}^{2n-2}\int_{\mt}|\nabla\bfu^\ep|^2\dxt\geq R\bigg)\\&\leq \,2\E\bigg[\|\bfu_\ep(0)\|_{L^2_x}^{2n}+\|\bfu_\ep(T)\|_{L^2_x}^{2n}+\ep\int_0^T\|\bfu_\ep\|_{L^2_x}^{2n-2}\int_{\mt}|\nabla\bfu^\ep|^2\dxt\bigg]
%\end{align*}
%which is bounded
%by \eqref{eq:apriori1'}. This gives again the required compactness.
%\end{remark}

\begin{proposition}\label{prop:skorokhodN}
There exists a complete probability space $(\tilde\Omega,\tilde{\mathfrak F},\tilde{\mathbb P})$ 
with $\chi$-valued random variables
{\small
\begin{align*}
\tilde{\boldsymbol{\Theta}}^N:=\bigg[\tilde B_t^N,\mathbb I_{\mathcal O_{\tilde\zeta^N_\epsilon}}\tilde{\mathbf u}^N,\tilde\bv^N,
\mathbb I_{\mathcal O_{\tilde\zeta^N_\epsilon}}\nabx\tilde{\mathbf u}^N, \tilde\eta^N ,\tilde\zeta^N,
\begin{pmatrix}
((\partial_t\tilde\eta^N,\mathbb I_{\mathcal O_{\tilde\zeta^N_\epsilon}}\tilde{\mathbf u}^N),
(\partial_t\tilde\eta^N,\mathscr F^{\tilde\zeta^N_\epsilon}_N\partial_t\tilde\eta^N))
\\
((\partial_t\tilde\eta^N,\mathbb I_{\mathcal O_{\tilde{\zeta}^N_\epsilon}}\tilde{\mathbf u}^N),(0,\tilde{\mathbf u}
^N-\mathscr F^{\tilde\zeta^N_\epsilon}_N\partial_t\tilde\eta^N))
\end{pmatrix}
\bigg]
\end{align*}
}
for $N\in\mathbb N$ and
$$\tilde{\boldsymbol{\Theta}}:=\bigg[\tilde B_t,\mathbb I_{\mathcal O_{\tilde\zeta_\epsilon}}\tilde{\mathbf u},\tilde\bv,
\mathbb I_{\mathcal O_{\tilde\zeta_\epsilon}}\nabx\tilde{\mathbf u}, \tilde\eta,\tilde\zeta ,
\begin{pmatrix}
((\partial_t\tilde\eta,\mathbb I_{\mathcal O_{\tilde\zeta_\epsilon}}\tilde{\mathbf u}),
(\partial_t\tilde\eta,\mathscr F^{\tilde\zeta_\epsilon}\partial_t\tilde\eta))
\\
((\partial_t\tilde\eta,\mathbb I_{\mathcal O_{\tilde\zeta_\epsilon}}\tilde{\mathbf u}),(0,\tilde{\mathbf u}
-\mathscr F^{\tilde\zeta_\epsilon}\partial_t\tilde\eta))
\end{pmatrix}
\bigg]$$
such that
\begin{enumerate}
 \item[(a)] For all $n\in \mathbb N$ the law of 
$\tilde{\boldsymbol{\Theta}}^N$
on $\chi$ is given by
\begin{align*}
 \mathcal{L}\bigg[B_t,\mathbb I_{\mathcal O_{\zeta_\epsilon}}\mathbf u^N,\bv,
\mathbb I_{\mathcal O_{\zeta_\epsilon}}\nabx \mathbf u^N, \eta^N,\zeta,\begin{pmatrix}((\partial_t\eta^N,\mathbb I_{\mathcal O_{\zeta_\epsilon}}\mathbf u^N),
(\partial_t\eta^N,\mathscr F^{\zeta_\epsilon}_N\partial_t\eta^N))\\
((\partial_t\eta^N,\mathbb I_{\mathcal O_{\zeta_\epsilon}} \mathbf u^N),(0,{\mathbf u}^N
-\mathscr F^{\zeta_\epsilon}_N\partial_t\eta^N))\end{pmatrix}\bigg]
\end{align*}
 \item[(b)] $\tilde{\boldsymbol{\Theta}}^N$ converges $\,\tilde{\mathbb P}$-almost surely to 
$\tilde{\boldsymbol{\Theta}}$
 in the topology of $\chi$, i.e.
\begin{align} \label{wWS116x} 
\begin{aligned}
\tilde B_t^N&\rightarrow \tilde B_t \quad \mbox{in}\quad C(\overline I) \ \tilde{\mathbb P}\mbox{-a.s.}, 
\\
\mathbb I_{\mathcal O_{\tilde\zeta_\epsilon^N}}\tilde{\mathbf u}^N,\,\tilde\bv^N&\weakto \mathbb I_{\mathcal O_{\tilde\zeta_\epsilon}}\tilde{\mathbf u} ,\,\tilde\bv\quad \mbox{in}\quad L^2(I;L^{2}(\mathcal O\cup S_{\alpha}))\ \tilde{\mathbb P}\mbox{-a.s.}, 
\\
\mathbb I_{\mathcal O_{\zeta^N_\epsilon}}\nabx\tilde{\mathbf u}^N&\weakto  \mathbb I_{\mathcal O_{\tilde\zeta_\epsilon}}\nabx\tilde{\mathbf u} \quad \mbox{in}\quad L^2(I;L^{2}(\mathcal O\cup S_{\alpha}))\ \tilde{\mathbb P}\mbox{-a.s.}, 
\\
\tilde \eta^N&\weakto^\ast \tilde \eta\quad \mbox{in}\quad L^\infty(I;W^{3,2}(\Gamma)) \ \tilde{\mathbb P}\mbox{-a.s.}, 
\\
\tilde \eta^N&\weakto^\ast \tilde \eta\quad \mbox{in}\quad W^{1,\infty}(I;L^{2}(\Gamma)) \ \tilde{\mathbb P}\mbox{-a.s.}, 
\\
\tilde \eta^N&\weakto \tilde \eta\quad \mbox{in}\quad W^{1,2}(I;W^{2,2}(\Gamma)) \ \tilde{\mathbb P}\mbox{-a.s.},\\
\tilde\zeta^N&\rightarrow \tilde\zeta \quad \mbox{in}\quad C(\overline I\times \Gamma) \ \tilde{\mathbb P}\mbox{-a.s.},
\end{aligned}
\end{align}
as well as (recalling the definition of $\tau_\sharp$ from \eqref{eq:tau})
\begin{align}\label{wWS116B}\begin{aligned}
\int_{I}\int_{\mathcal O\cup S_{\alpha}}&\mathbb I_{\mathcal O_{\tilde\zeta_\epsilon^N}}
\bu^N\cdot\mathscr F^{\tilde\zeta^N_\epsilon}_N\partial_t\tilde\eta^N\dxt+\int_{I}\int_\Gamma|\partial_t\tilde\eta^N|^2\,\dy\dt
\\
&\longrightarrow \int_{I}\int_{\mathcal O\cup S_{\alpha}}\mathbb I_{\mathcal O_{\tilde\zeta_\epsilon}}
\tilde\bfu\cdot\mathscr F^{\tilde\zeta_\epsilon}\partial_t\tilde\eta\dxt+\int_{I}\int_\Gamma|\partial_t\tilde\eta|^2\,\dy\dt
\end{aligned}
\end{align}
and
\begin{align}\label{wWS116C}\begin{aligned}
\int_{I}\int_{\mathcal O\cup S_{\alpha}}&
\mathbb I_{\mathcal O_{\tilde\zeta_\epsilon^N}}
\tilde\bu^N\cdot(\tilde\bfu^N-\mathscr F^{\tilde\zeta^N_\epsilon}_N\partial_t\tilde\eta^N)\dxt\\
&\longrightarrow \int_{I}\int_{\mathcal O\cup S_{\alpha}}
\mathbb I_{\mathcal O_{\tilde\zeta_\epsilon}}
\tilde\bfu\cdot(\tilde\bfu-\mathscr F^{\tilde\zeta_\epsilon}\partial_t\tilde\eta)\dxt
\end{aligned}
\end{align}
$\tilde{\mathbb P}$-a.s.
\end{enumerate}
\end{proposition}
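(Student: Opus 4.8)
The plan is to obtain Proposition~\ref{prop:skorokhodN} as an application of Jakubowski's version of the Skorokhod representation theorem to the tight family of laws furnished by Lemma~\ref{lem:tight}, followed by an unravelling of the resulting almost sure convergence over the factors of the path space $\chi$.

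First I would verify that $\chi$ is admissible for Jakubowski's framework, i.e.\ that it is quasi-Polish: a countable family of continuous real-valued functions separates its points. The factors $\chi_B=C(\overline I)$ and $\chi_\zeta=C(\overline I\times\Gamma)$ are Polish; each of $\chi_{\bu}$, $\chi_{\nabx\bu}$ and $\chi_\eta$ is (a countable intersection of) bounded sets of separable reflexive Banach spaces endowed with the weak or weak-$\ast$ topology, on which these topologies are metrisable and separable, and the a priori bounds \eqref{GalerkinConvEta1Np}--\eqref{GalerkinConvEta2Np} ensure that the relevant laws are carried by such sets; finally $\chi_{f,g}=(L^2(I;X'\times X),\tau_\sharp)$ was already observed to be quasi-Polish, $\tau_\sharp$ being finer than a weak topology and generated together with it by the two continuous functionals occurring in \eqref{eq:tau}. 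As a finite product of quasi-Polish spaces is quasi-Polish, the tightness of Lemma~\ref{lem:tight} combined with the quasi-Polish versions of Prokhorov's and Skorokhod's theorems (cf.\ \cite[Theorem~A.1]{on2} and \cite{jakubow}) yields, along a subsequence which we do not relabel, a complete probability space $(\tilde\Omega,\tilde{\mathfrak F},\tilde{\mathbb P})$ carrying random variables $\tilde{\boldsymbol\Theta}^N$ and $\tilde{\boldsymbol\Theta}$ such that $\tilde{\boldsymbol\Theta}^N$ has the law stated in~(a) and $\tilde{\boldsymbol\Theta}^N\to\tilde{\boldsymbol\Theta}$ $\tilde{\mathbb P}$-a.s.\ in the topology of $\chi$.

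Next I would read off~(b) by projecting this convergence onto the factors of $\chi$: the factors $\chi_B,\chi_{\bu},\chi_{\nabx\bu},\chi_\eta,\chi_\zeta$ give exactly the lines of \eqref{wWS116x}, while convergence in $\chi_{f,g}$ unpacks, via the definition \eqref{eq:tau} of $\tau_\sharp$, into weak convergence of the two Aubin-Lions pairs together with the scalar convergences \eqref{wWS116B} and \eqref{wWS116C}. It then remains to check that the limit tuple has the displayed structure: that the limiting fluid variable equals $\mathbb I_{\mathcal O_{\tilde\zeta_\epsilon}}\tilde{\mathbf u}$ for some $\tilde{\mathbf u}\in L^2(I;W^{1,2}(\mathcal O_{\tilde\zeta_\epsilon}))$, with $\mathbb I_{\mathcal O_{\tilde\zeta_\epsilon}}\nabx\tilde{\mathbf u}$ the corresponding limit of the gradients, and that the last two blocks are $\mathscr F^{\tilde\zeta^N_\epsilon}_N\partial_t\tilde\eta^N$ and $\tilde{\mathbf u}^N-\mathscr F^{\tilde\zeta^N_\epsilon}_N\partial_t\tilde\eta^N$. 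Each of these is a Borel-measurable relation among the components of the tuple (the fluid field being supported in the region determined by $\tilde\zeta^N$; the ``extension'' component being the image of the shell component under the fixed linear operator $\mathscr F_N$; $\tilde\bv^N$ and $\tilde\zeta^N$ being the prescribed data), valid $\mathbb P$-a.s.\ on the original space and depending only on the joint law, hence inherited $\tilde{\mathbb P}$-a.s.\ by $\tilde{\boldsymbol\Theta}^N$ through~(a); letting $N\to\infty$ and using that $\tilde\zeta^N\to\tilde\zeta$ uniformly, these identifications survive in the limit, with $\tilde{\mathbf u}$ defined by restricting the a.s.\ weak limit to $\mathcal O_{\tilde\zeta_\epsilon}$.

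I expect the only genuinely delicate point to be the admissibility of the topology $\tau_\sharp$, namely that it is quasi-Polish and compatible with Jakubowski's theorem; this is precisely what the reformulation of the compactness lemma of \cite{MuSc} in terms of tightness, carried out in Section~\ref{ssec:comp} around \eqref{eq:tau}, is designed to secure. Granting that, the remaining work — identifying the support of the limiting velocity on the moving limit domain $\mathcal O_{\tilde\zeta_\epsilon}$ and preserving the algebraic links between the components of $\tilde{\boldsymbol\Theta}$ — is routine, resting only on the principle that an almost sure relation among random variables is determined by their joint law; no new a priori estimate enters at this stage.
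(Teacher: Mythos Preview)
Your proposal is correct and matches the paper's approach: the paper states the proposition as a direct consequence of Lemma~\ref{lem:tight} together with Jakubowski's Skorokhod theorem for quasi-Polish spaces (citing \cite{jakubow} and \cite[Theorem~A.1]{on2}), having already noted just before \eqref{eq:tau} that $(L^2(I;X'\times X),\tau_\sharp)$ is quasi-Polish. Your additional remarks on identifying the structure of the limit components via equality in law are a reasonable elaboration of a point the paper leaves implicit here (and addresses only later, in a footnote to Proposition~\ref{prop:skorokhod'}, by appealing to measurability of $\mathscr F$ on the path space).
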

Now we introduce the filtration on the new probability space, 
which ensures the correct measurabilities of the new random variables.
%We denote by $\bfr_t$ the operator of restriction to the interval $[0,t]$ acting on various path spaces. In particular, if $X$ stands for one of the path spaces $L^r_{\mathrm loc}(0,\infty;L^r(\mathcal{O}))$ or $C_{\mathrm loc}([0,\infty),\mathfrak U_0)$ and $t\in[0,T]$, we define
%\begin{align}\label{restr}
%\bfr_t:X\rightarrow X|_{[0,t]},\quad f\mapsto f|_{[0,t]}.
%\end{align}
%Clearly, $ \bfr_t$ is a continuous mapping.
Let $(\tilde{\mathfrak F}_t)_{t\geq0}$ and $(\tilde{\mathfrak F}_t^{N})_{t\geq0}$ be the $\tilde{\mathbb P}$-augmented
canonical filtration on the variables $\tilde{\boldsymbol{\Theta}}$ and
$\tilde{\boldsymbol{\Theta}}^N $, respectively, that is\footnote{Some of the variables are not continuous in time, for those one can deifne $\sigma_t$ as the history of a random distribution, cf. \cite[Chapter 2.8]{BFHbook}}
%\todo{add filtrations}
\begin{align*}
\tilde{\mathfrak F}_t=\sigma\big[\sigma_t(\tilde B_t)\cup \sigma_t(\mathbb I_{\mathcal O_{\tilde\zeta_\epsilon}}\tilde{\mathbf u})\cup \sigma_t(\tilde\bv) \cup \sigma_t(
\mathbb I_{\mathcal O_{\tilde\zeta_\epsilon}}\nabx\tilde{\mathbf u}) \cup \sigma_t( \tilde\eta) \cup \sigma_t(\tilde\zeta )% \cup \sigma_t
%\begin{pmatrix}
%((\partial_t\tilde\eta,\mathbb I_{\mathcal O_{\tilde\zeta_\epsilon}}\mathbf u),
%(\partial_t\tilde\eta,\mathscr F^{\tilde\zeta_\epsilon}\partial_t\tilde\eta))
%\\
%((\partial_t\tilde\eta,\mathbb I_{\mathcal O_{\tilde\zeta}}\tilde{\mathbf u}),(\partial_t\tilde\eta,\tilde{\mathbf u}
%-\mathscr F^{\tilde\zeta_\epsilon}\partial_t\tilde\eta))\end{pmatrix}
\big]
\end{align*}
for $t\in I$ and similarly for $\tilde{\mathfrak F}_t^N$.
By \cite[Theorem 2.9.1]{BFHbook} the weak equation continuous to hold on the new probability space.
Combining \eqref{wWS116B} and \eqref{wWS116C} we have
\begin{align*}
\int_{I}\int_{\mathcal O_{\tilde\zeta^N_\epsilon}}&|\tilde\bfu^N|^2\dxt
+\int_{I}\int_\Gamma|\partial_t\tilde\eta^N|^2\,\dy\dt\\&\longrightarrow \int_{I}\int_{\mathcal O_{\tilde\zeta}}|\tilde\bfu|^2\dxt
+\int_{I}\int_\Gamma|\tilde\partial_t\eta|^2\,\dy\dt
\end{align*}
 $\tilde{\mathbb P}$-a.s. By uniform convexity of the $L^2$-norm this implies
\begin{align*}
\partial_t\tilde\eta^N&\rightarrow \partial_t\tilde\eta
 \quad \mbox{in}\quad L^2(I;L^{2}(\Gamma))\ \tilde{\mathbb P}\mbox{-a.s.}
\end{align*}
This is sufficient to pass to the limit in the weak formulation of the equations (note that all terms except for the stochastic integral can be treated 
by using \eqref{wWS116x}).

Since we have a linear problem at hand, we may now take appropriate subsequence and pass to the limit in 
\eqref{galerkinweak1}. Thus we obtain a martingale solution to \eqref{weakeps}. In order to obtain a probabilistically strong solution we prove in the following subsections pathwise uniqueness as well as convergence in probability of the original sequence.

\subsection{Pathwise uniqueness}
We are now going to prove that any martingale solution  satisfies the energy equality \eqref{eq:eneps}.
Pathwise uniqueness is then a direct consequence of the linearity of the problem.
\begin{proposition}\label{prop:unique}
Suppose that $(\eta,\bu)$ is a weak martingale solution to  \eqref{weakeps}. Then it holds $\mathbb P$-a.s.
\begin{align*}
\tfrac{1}{2}\int_{\mathcal{O}_{\zeta_\epsilon}} & |\bu(t)|^2\dx
+
\int_0^t\int_{\mathcal{O}_{\zeta_\epsilon}}|\nabx\bu|^2\dx\ds
+
\epsilon
\int_0^t\int_\Gamma
|\partial_s\Dely\eta|^2\dy\ds
\\&+
\int_\Gamma\Big(\tfrac{1}{2}|\partial_t\eta(t)|^2
+
\tfrac{1}{2}
|\Dely\eta(t)|^2
+\epsilon |\naby^3\eta(t)|^2\Big)\dy
\\
&
= \tfrac{1}{2}\int_{\mathcal O_{\zeta_\epsilon(0)} }|{\bu}_0|^2\dx
+
\int_\Gamma\Big(\tfrac{1}{2}|\eta_1|^2
+
\tfrac{1}{2}
|\Delx\eta_0|^2+ \epsilon|\nabx^3\eta_0|\Big)\dy
\end{align*} 
for a.a. $t\in I$.
\end{proposition}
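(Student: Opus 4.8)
The plan is to test the weak formulation \eqref{weakeps} ``against the solution'' and apply It\^o's formula to the energy functional $t\mapsto\tfrac12\int_{\mathcal O_{\zeta_\epsilon(t)}}|\bu(t)|^2\dx+\tfrac12\int_\Gamma|\partial_t\eta(t)|^2\dy$. Since the admissible test functions $(\phi,\bfphi)$ are independent of time whereas both $(\iota_{\zeta_\epsilon(t)}\phi,\mathcal J_{\zeta_\epsilon(t)}\bfphi)$ and the solution depend on $t$, I would make this rigorous through a finite-dimensional reduction adapted to the coupling. Concretely, using the basis from \eqref{eq:bwi}, I would assume (without loss of generality) that $(Y_i)$ consists of Fourier eigenfunctions on $\Gamma$ and $(\mathbf X_i)$ of Stokes eigenfunctions in $\mathcal O$, so that the associated projections $\Pi^N$ onto $\mathrm{span}\{w_i\}_{i\le N}$ and onto $\mathrm{span}\{\mathbf X_i\}_{i\le N}$ are bounded on the relevant Sobolev scales; then I would set $\partial_t\eta^N:=\Pi^N\partial_t\eta$, $\eta^N:=\eta_0+\int_0^{\cdot}\partial_t\eta^N\ds$, and define $\bu^N:=\mathcal J_{\zeta_\epsilon(t)}\bm v^N$ with $\bm v^N$ the Stokes extension of $\partial_t\eta^N$ corrected by a projected interior part of $\mathcal J_{\zeta_\epsilon(t)}^{-1}\bu$, so that $\divx\bu^N=0$, $\bu^N\circ\bfvarphi_{\zeta_\epsilon}=\bn\,\iota_{\zeta_\epsilon}\partial_t\eta^N$, and $\bu^N\in\mathrm{span}\{\mathcal J_{\zeta_\epsilon(t)}\bm w_i\}_{i\le N}$. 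Exploiting the additional $\epsilon$-regularity $\eta\in W^{1,2}(I;W^{2,2}(\Gamma))\cap L^\infty(I;W^{3,2}(\Gamma))\cap W^{1,\infty}(I;L^2(\Gamma))$ and $\bu\in L^2(I;W^{1,2}_{\divx}(\mathcal O_{\zeta_\epsilon}))$, together with the mapping properties of $\mathcal J_{\zeta_\epsilon}$ and of the extension and projection operators, one checks $\mathbb P$-a.s. (with uniform-in-$N$ bounds inherited from $(\eta,\bu)$) that $\eta^N\to\eta$ in $C(\overline I;W^{3,2}(\Gamma))\cap W^{1,2}(I;W^{2,2}(\Gamma))$, $\partial_t\eta^N\to\partial_t\eta$ in $C(\overline I;L^2(\Gamma))$, and $\mathbb I_{\mathcal O_{\zeta_\epsilon}}\bu^N,\mathbb I_{\mathcal O_{\zeta_\epsilon}}\nabx\bu^N\to\mathbb I_{\mathcal O_{\zeta_\epsilon}}\bu,\mathbb I_{\mathcal O_{\zeta_\epsilon}}\nabx\bu$ strongly in $L^2(I;L^2(\mathcal O\cup S_\alpha))$.

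Then I would derive the equation for the approximation: testing \eqref{weakeps} with $(w_j,\bm w_j)$, $j\le N$, and with $(0,\mathbf X_i)$ for the interior modes, and recombining with the coefficients of $(\bu^N,\partial_t\eta^N)$, yields a finite system of real-valued It\^o equations of exactly the structure of the Galerkin system \eqref{galerkinweak3} up to a source term $r^N$ collecting the contributions of the modes of index $>N$ in the elastic, viscous, convective and stochastic integrals; $r^N$ is finite because $\zeta_\epsilon$ is smooth and fixed on this level. Applying the classical (finite-dimensional) It\^o formula to $\tfrac12\int_{\mathcal O_{\zeta_\epsilon(t)}}|\bu^N(t)|^2\dx+\tfrac12\int_\Gamma|\partial_t\eta^N(t)|^2\dy$ — recalling that the Gram coefficients $a_{ij}(t)$ are differentiable in $t$ — then produces the identity \eqref{eq:eneps} for $(\eta^N,\bu^N)$ up to an error $R^N(t)$. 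Here integration by parts in the $\Dely^2\eta^N$, $\epsilon\partial_t\Dely^2\eta^N$ and $\epsilon\naby^3\eta^N$ terms (legitimate for finite linear combinations) yields the contributions $\tfrac12|\Dely\eta^N(t)|^2$, $\epsilon\int_0^t|\partial_s\Dely\eta^N|^2$ and $\epsilon|\naby^3\eta^N(t)|^2$; the drift coming from the $t$-derivatives of $a_{ij}$ is precisely the Reynolds transport boundary term $\tfrac12\int_\Gamma\bn_{\zeta_\epsilon}\cdot\bn\,\iota_{\zeta_\epsilon}\partial_t\eta^N\,\partial_t\zeta_\epsilon\,\partial_t\eta^N\,|\det(\naby\bfvarphi_{\zeta_\epsilon})|$ appearing in \eqref{weakeps} and cancels against it; the skew-symmetrised convective term contributes only to $R^N$; and, since $\divy\bsigma=0$ makes $(\bsigma\cdot\naby)$ skew-adjoint on $L^2(\Gamma)$, the It\^o--Stratonovich correction $\tfrac12\int_\Gamma(\bsigma\cdot\naby)(\bsigma\cdot\naby)\partial_t\eta^N\,\iota_{\zeta_\epsilon}\partial_t\eta^N$ is cancelled by the quadratic variation of the noise, the martingale part reducing to an $R^N$-remainder (the $\iota_{\zeta_\epsilon}$-weight producing only lower-order terms in $\naby\iota_{\zeta_\epsilon}$).

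Finally I would pass to the limit $N\to\infty$. By the convergences above and the uniform bounds, each term of the $N$-level identity converges to the corresponding term in \eqref{eq:eneps}; crucially, the \emph{strong} $L^2$-convergence of $\nabx\bu^N$, $\partial_s\Dely\eta^N$, $\Dely\eta^N(t)$ and $\naby^3\eta^N(t)$ upgrades the a priori only lower-semicontinuous dissipative and elastic terms to equalities, and the stochastic integral converges in probability because its integrand converges in $L^2(\Omega\times I\times\Gamma)$. The remainders $r^N$ and $R^N$ vanish $\mathbb P$-a.s., being controlled by tails of the $W^{3,2}(\Gamma)$-norm of $\partial_t\eta$ and of the $W^{1,2}$-norm of $\bu$, which are $\mathbb P$-a.s. finite. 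This proves \eqref{eq:eneps}; pathwise uniqueness is then immediate, since the difference of two solutions on the same stochastic basis with the same data and Brownian motion solves the homogeneous version of \eqref{weakeps} with zero data, so \eqref{eq:eneps} forces its energy to vanish identically.

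The main obstacle is the finite-dimensional reduction itself: constructing the pair $(\eta^N,\bu^N)$ so that it \emph{simultaneously} respects the kinematic coupling $\bu^N\circ\bfvarphi_{\zeta_\epsilon}=\bn\,\iota_{\zeta_\epsilon}\partial_t\eta^N$ and the divergence constraint on the \emph{moving} domain $\mathcal O_{\zeta_\epsilon}$, converges strongly in all the norms entering \eqref{eq:eneps} (which is what turns the energy inequality into an equality), and generates only remainders controllable by mode-tails; the weighted Piola structure $\iota_{\zeta_\epsilon}=(\det\nabx\bfPsi_{\zeta_\epsilon})^{-1}$ and the time dependence of the domain make each of these points delicate, and it is exactly this that forces the extra $\epsilon$-regularising terms into \eqref{shellEqeps}.
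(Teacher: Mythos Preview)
Your approach is genuinely different from the paper's. The paper does \emph{not} use a finite-dimensional approximation at all: instead it pulls the whole problem back to the fixed reference domain via the inverse Piola transform, setting $\bar\bu:=\mathcal J_{\zeta_\epsilon}^{-1}\bu$, $\bar\bv:=\nabx\bfPsi_{\zeta_\epsilon}^\top(|\det\nabx\bfPsi_{\zeta_\epsilon}|^{-1}\nabx\bfPsi_{\zeta_\epsilon})\bar\bu$ and $\partial_t\bar\eta:=\iota_{\zeta_\epsilon}\partial_t\eta$. In these variables \eqref{weakeps} becomes an It\^o equation on the \emph{fixed} Hilbert space $\mathbb H=\{(\xi,\mathbf w)\in W^{2,2}(\Gamma)\times W^{1,2}_{\divx}(\mathcal O):\mathbf w\circ\bfvarphi=\xi\bn\}$, and the energy functional $\tfrac12\int_{\mathcal O_{\zeta_\epsilon}}|\bu|^2+\tfrac12\int_\Gamma|\partial_t\eta|^2$ is recognised as a $C^2$ quadratic form on $\mathbb H$ with time-dependent (but smooth and deterministic, since $\zeta_\epsilon$ is) weights. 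The infinite-dimensional It\^o formula of Da~Prato--Zabczyk then applies directly, and the extra drift terms coming from the $t$-derivative of the weights are exactly the $\partial_t(\iota_{\zeta_\epsilon})$ and $\partial_t(\mathcal J_{\zeta_\epsilon})$ terms already present on the right-hand side of \eqref{weakeps}, so they cancel. This buys the result in one stroke, without any limit procedure.

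Your scheme, by contrast, has a genuine gap at the step you yourself flag. When you test \eqref{weakeps} with $(w_j,\bm w_j)$ you obtain an It\^o equation for the \emph{weighted} pairings $\gamma_j(t)=\int_\Gamma\partial_t\eta\,\iota_{\zeta_\epsilon}w_j+\int_{\mathcal O_{\zeta_\epsilon}}\bu\cdot\mathcal J_{\zeta_\epsilon}\bm w_j$, not for the Fourier coefficients of $(\partial_t\eta,\bu)$; the Gram matrix $(a_{ij}(t))$ of the $\iota_{\zeta_\epsilon}w_i$ and $\mathcal J_{\zeta_\epsilon}\bm w_i$ is time-dependent and non-diagonal, so ``recombining with the coefficients of $(\bu^N,\partial_t\eta^N)$'' does not produce the Galerkin system \eqref{galerkinweak3} for a projected pair plus a benign remainder --- the right-hand side still contains the full $\eta$ and $\bu$, and the quadratic form you need for the energy is $\sum_{i,j}a_{ij}^{-1}\gamma_i\gamma_j$, whose It\^o expansion generates precisely the $\partial_t a_{ij}$-terms the paper handles via the weight derivatives. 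Moreover, your remainder control is not available: $\partial_t\eta\in L^2(I;W^{2,2}(\Gamma))\cap L^\infty(I;L^2(\Gamma))$ only, not $W^{3,2}(\Gamma)$, so tails in that norm are not at your disposal. The paper's pull-back eliminates all of these bookkeeping issues at once; if you want to salvage your route, the cleanest fix is to perform the same change of variables first and then project on the \emph{fixed} space $\mathbb H$, at which point your argument becomes a spectral-truncation proof of the Hilbert-space It\^o formula rather than a genuinely different method.
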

%D: The following requires $\partial_t\eta\in W^{2,2}(\Gamma)$. Hence I suggest adding the term $\epsilon\partial_t\Dely^2\eta$ in the regularised shell equation.  
\begin{proof}
We rewrite \eqref{weakeps} as an equation
for $(\partial_t\bar\eta,\bar\bv)$, where $\bar\eta:=\int_0^t\iota_{\zeta_\epsilon}\partial_t\eta\,\dd s$ and $$\bar\bv:=\nabx\bfPsi_{\zeta_\epsilon}^\top(|\mathrm{det}(\nabx \bfPsi_{\zeta_\epsilon})|^{-1}\nabx\bfPsi_{\zeta_\epsilon})\bar\bu$$ with $\bar\bu:=\mathcal J_{\zeta_\epsilon}^{-1}\bu$ which reads as 
\begin{align}
\dd  \bigg\langle&\begin{pmatrix}\partial_t\bar\eta\\\bar\bv
\end{pmatrix}  
\cdot \begin{pmatrix} \phi
\nonumber
\\
\bm{\phi}
\end{pmatrix}
\bigg\rangle_{\mathbb H}
\nonumber
\\&=
\mathscr L  
\begin{pmatrix} 
\phi \\
\bm{\phi}
\end{pmatrix} +
\int_{\Gamma}  
( (\bsigma\cdot\naby)\partial_t \eta ) 
\iota_{\zeta_{\varepsilon}}\phi \dy\dd B_t
\nonumber
\\
&:=\int_{\Gamma} \big(\partial_t\eta \, \partial_t(\iota_{\zeta_{\epsilon}})\phi
 -
 \Dely \iota_{\zeta_\epsilon} \phi \Dely\eta
 -
\epsilon
\Dely \iota_{\zeta_{\epsilon}}\phi \,\partial_t\Dely\eta
\big)\dy\dt
\nonumber
\\&
-\epsilon
\int_{\Gamma}
\naby^3 \iota_{\zeta_{\epsilon}}\phi :\naby^3\eta
\dy\dt
+
\int_{\Gamma} \bigg(\frac{1}{2}\bn_{\zeta_\epsilon} \cdot \bn^\top \iota_{\zeta_\epsilon} \phi \,  \partial_t{\zeta_\epsilon} \,\partial_t\eta  \,
 \vert\det(\naby\bm{\varphi}_{\zeta_\epsilon})\vert
\bigg)\dy\dt
\nonumber
\\
&+
 \int_{\mathcal{O}_{\zeta_\epsilon (t)}}\Big(  \bu\cdot \partial_t (\mathcal{J}_{\zeta_\epsilon (t)}) \bm{\phi} 
-
\frac{1}{2}((\bv_\epsilon\cdot\nabx)\bu)\cdot (\mathcal{J}_{\zeta_\epsilon(t)}\bfphi )  \Big) \dx\dt 
\nonumber
\\
&+
 \int_{\mathcal{O}_{\zeta_\epsilon (t)}}\Big(  
\frac{1}{2}((\bv_\epsilon\cdot\nabx)\mathcal{J}_{\zeta_\epsilon(t)}\bm{\phi}) \cdot \bu
-\nabx \bu:\nabx (\mathcal{J}_{\zeta_\varepsilon (t)}\bm{\phi}) \Big) \dx\dt  
\nonumber
 \\
&
 +
 \frac{1}{2}
\int_{\Gamma} 
((\bsigma\cdot\naby)(\bsigma\cdot\naby)\partial_t \eta ) \iota_{\zeta_{\varepsilon}}\phi
\dy\dt 
- 
\int_{\Gamma}  
( (\bsigma\cdot\naby)\partial_t \eta ) 
\iota_{\zeta_{\varepsilon}}\phi \dy\dd B_t
\label{weakeps'}
\end{align}
where 
\begin{align*}
\mathbb H:=\big\{(\xi,\mathbf{w})\in W^{2,2}(\Gamma)\times W^{1,2}_{\divx}(\mathcal O):\,\mathbf{w} \circ\bm{\varphi}=\xi\bn\,\,\text{on}\,\Gamma \big\}.
\end{align*}
Note that $\mathbb H$ is a Hilbert space (as closed subset of the Hilbert space $W^{2,2}(\Gamma)\times W^{1,2}_{\divx}(\mathcal O)$) and $\mathscr L\in\mathbb H'$. Hence we get from It\^{o}'s formula in Hilbert spaces (see  \cite[Theorem 4.17]{DaPr})
applied to the mapping
\begin{align*}
t\mapsto \tfrac{1}{2}\int_{\mathcal O}\bar\bv\cdot
|\mathrm{det}(\nabx \bfPsi_{\zeta_\epsilon})|(\nabx\bfPsi_{\zeta_\epsilon}^\top
\nabx\bfPsi_{\zeta_\epsilon}^\top)^{-1}
\bar\bv\dx+\tfrac{1}{2}\int_\Gamma\Big|\tfrac{\partial_t\bar\eta}{\iota_{\zeta_\epsilon}}\Big|^2\,\dy,
\end{align*}
noticing that $|\mathrm{det}(\nabx \bfPsi_{\zeta_\epsilon})|(\nabx\bfPsi_{\zeta_\epsilon}^\top
\nabx\bfPsi_{\zeta_\epsilon}^\top)^{-1}
\bar\bv=\bar\bu$,
\begin{align*}
\tfrac{1}{2}\int_{\mathcal O_{\zeta_\epsilon(t)}}&|\bu|^2\dx
+\tfrac{1}{2}\int_\Gamma|\partial_t\eta|^2\dy
\\&=
\tfrac{1}{2}\int_{\mathcal O}\bar\bv\cdot
|\mathrm{det}(\nabx \bfPsi_{\zeta_\epsilon})|(\nabx\bfPsi_{\zeta_\epsilon}^\top
\nabx\bfPsi_{\zeta_\epsilon}^\top)^{-1}
\bar\bv\dx
+
\tfrac{1}{2}\int_\Gamma\Big|\tfrac{\partial_t\bar\eta}{\iota_{\zeta_\epsilon}}\Big|^2\dy
\\
&=
\tfrac{1}{2}\int_{\mathcal O}\bar\bv(0)\cdot\bar\bu(0)\dx+\tfrac{1}{2}\int_\Gamma|\eta_1|^2\,\dy+\int_0^t\mathscr L\begin{pmatrix}\iota_{\zeta_\epsilon}^{-2}\partial_t\bar\eta\\\bar\bu
\end{pmatrix} \,\dd s
\\
&+\int_0^t\int_{\Gamma}  
( (\bsigma\cdot\naby)\partial_t \eta ) 
\partial_t\eta\dy\dd B_s+\tfrac{1}{2}\int_0^t\int_{\Gamma}  
|(\bsigma\cdot\naby)\partial_t \eta |^2\dy\,\dd s\\
&+\tfrac{1}{2}\int_{\mathcal O}\bar\bv\cdot\partial_t\big(|\mathrm{det}(\nabx \bfPsi_{\zeta_\epsilon})|(\nabx\bfPsi_{\zeta_\epsilon}^\top
\nabx\bfPsi_{\zeta_\epsilon}^\top)^{-1}
\big)\bar\bv\dx-\int_\Gamma\tfrac{|\partial_t\bar\eta|^2}{\iota_{\zeta_\epsilon}^3}\partial_t\iota_{\zeta_\epsilon}\,\dy.
\end{align*}
Noticing various cancellations such as
\begin{align*}
\int_{\mathcal O}\bar\bv\cdot\partial_t\big(|\mathrm{det}(\nabx \bfPsi_{\zeta_\epsilon})|(\nabx\bfPsi_{\zeta_\epsilon}^\top
\nabx\bfPsi_{\zeta_\epsilon}^\top)^{-1}
\big)\bar\bv\dx&= \int_{\mathcal{O}_{\zeta_\epsilon (t)}} \bu\cdot (\partial_t \mathcal{J}_{\zeta_\epsilon (t)} \bar\bu)\dx 
\\
\int_\Gamma\tfrac{|\partial_t\bar\eta|^2}{\iota_{\zeta_\epsilon}^3}\partial_t\iota_\zeta\,\dy&=\int_{\Gamma}   \partial_t\eta \, (\partial_t\iota_{\zeta_{\varepsilon}})\iota_{\zeta_\epsilon}^{-2}\partial_t\bar\eta\dy
\end{align*}
we obtain
\begin{align*}
\tfrac{1}{2}\int_{\mathcal O_{\zeta_\epsilon(t)}}|\bu|^2\dx+\tfrac{1}{2}\int_\Gamma|\partial_t\eta|^2\dy=&\tfrac{1}{2}\int_{\mathcal O_{\zeta_\epsilon(0)}}|\bu_0|^2\dx+\tfrac{1}{2}\int_\Gamma|\eta_1|^2\,\dy\\
&-
\int_{\Gamma} \Dely \iota_{\zeta_\varepsilon}\partial_t\bar\eta \,\Dely\eta\dy
\\&-\int_0^t\int_{\mathcal O_\zeta}\nabx \bu:\nabx (\mathcal{J}_{\zeta_\varepsilon (t)}\bar\bu)  \dx\ds .
\end{align*}
The claim follows on noticing that $\bu=\mathcal{J}_{\zeta_\varepsilon (t)}\bar\bu$ and $\Dely (\iota_{\zeta_\varepsilon}\partial_t\eta)=\partial_t\Dely\bar\eta$.
\end{proof}

\subsection{Convergence in probability}\label{sec:cip}
In order to complete the proof of Theorem \ref{thm:main'}, we make use of  \cite[Chapter 2, Theorem 2.10.3]{BFHbook} which is a generalization of  the Gy\"{o}ngy--Krylov characterization of convergence in probability introduced in \cite{krylov} to quasi-Polish spaces. It applies to situations where pathwise uniqueness and existence of a martingale solution are valid and allows to establish existence of a probabilistically strong solution. We consider two sequences $(N_n),(N_m)\subset \mathbb N$ diverging to infinity. Let
$${\boldsymbol{\vartheta}}^N:=\bigg[\mathbb I_{\mathcal O_{\zeta_\epsilon}}\mathbf u^N,
\mathbb I_{\mathcal O_{\zeta_\epsilon}} \nabx{\mathbf u}^N, \eta^N , 
\begin{pmatrix}
((\partial_t\eta^N,\mathbb I_{\mathcal O_{\zeta_\epsilon}}\mathbf u^N),
(\partial_t\eta^N,\mathscr F^{\zeta_\epsilon}_N\partial_t\eta^N))
\\
((\partial_t\eta^N,\mathbb I_{\mathcal O_{\zeta_\epsilon}} \mathbf u^N),(\partial_t \eta^N, \mathbf u
^N-\mathscr F^{\zeta_\epsilon}_N\partial_t\eta^N))\end{pmatrix}\bigg],$$
for $N\in\mathbb N$ and set
$${\boldsymbol{\vartheta}}^n:={\boldsymbol{\vartheta}}^{N_n},\quad{\boldsymbol{\vartheta}}^m:={\boldsymbol{\vartheta}}^{N_m}.$$
We also set
$\bu^n:=\bu^{N_n}$, $\eta^n:=\eta^{N_n}$ and $\bu^m:=\bu^{N_m},\eta^m:=\eta^{N_m}$.
We consider the collection of joint laws of $({\boldsymbol{\vartheta}}^n,{\boldsymbol{\vartheta}}^m,\bv,\zeta,B_t)$
%, where
%\begin{align*}
%\bfX^n&=(\bfv^{N_n},\bfF_p(\cdot,\ep(\bfv^{N_n}))),\quad
%\bfX^m=(\bfv^{N_n},\bfF_p(\cdot,\ep(\bfv^{N_n})),
%\end{align*}
on the extended path space
$$\chi_{\tt ext}:=\big(\chi_\bu\times\chi_{\nabla\bu}\times\chi_\eta\times\chi_{f,g}^2\big)^2\times\chi_{\bu}\times\chi_\eta\times\chi_B.$$ 
%denoted by $\mu^{n,m}$. For this purpose we define the extended path space
%$$\mathcal{X}^J=\mathcal{X}_\varrho\times\mathcal{X}_{\bfu}\times\mathcal{X}_{\varrho\bfu}\times\mathcal{X}_\varrho\times\mathcal{X}_{\bfu}\times\mathcal{X}_{\varrho\bfu}\times\mathcal{X}_W$$
%As above, denote by $\mu_W$ the law of $W$ and set $\nu^{n,m}$ to be the joint law of
%$$(\varrho_n,\bfu_n,\mathcal{P}_H(\varrho_n\bfu_n),\varrho_m,\bfu_m,\mathcal{P}_H(\varrho_m\bfu_m),W)\quad\text{on}\quad\mathcal{X}^J.$$
Similarly to Lemma \ref{lem:tight} we obtain tightness of 
$$\{\mathcal L[ {\boldsymbol{\vartheta}}^n,{\boldsymbol{\vartheta}}^m,\bv
,\zeta,B_t);\,n,m\in\mathbb N\}$$
on $\chi_{\tt ext}$.

Let us take any subsequence $({\boldsymbol{\vartheta}}^{n_k},{\boldsymbol{\vartheta}}^{m_k},\bv,\zeta,B_t)$. By the Jakubowski-Skorokhod representation theorem we infer for a non-relabelled subsequence the existence of a probability space $(\bar{\Omega},\bar{\mathfrak F},\bar{\mathbb P})$ with a sequence of random variables  $(\hat{\boldsymbol{\vartheta}}^{n_k},\check{\boldsymbol{\vartheta}}^{m_k},\overline\bv^{k},\overline\zeta^{k},\overline B_t^{k})$
% with
%\begin{align*}
%\hat\bfX^{n_k}&=(\hat\bfv^{{n_k,}},\hat\bfF^{{n_k}}),\quad k\in\mn,\\
%\hat\bfX^{m_k}&=(\check\bfv^{{m_k,}},\check\bfF^{{m_k}}),\quad k\in\mn,
%\end{align*}
converging almost surely in $\chi_{\tt ext}$ to a random variable $(\hat{\boldsymbol{\vartheta}},\check{\boldsymbol{\vartheta}},\overline\bv,\overline\zeta,\overline B_t)$.
%with
%\begin{align*}
%\hat\bfX&=(\hat\bfv,\hat\bfF),\quad
%\check\bfX=(\hat\bfv,\hat\bfF).
%\end{align*}
%As before it follows that
%\begin{align}\label{eq:911}
%\hat\bfF=\bfF_{\bar p}(\cdot,\ep(\hat\bfv)),\quad \check\bfF=\bfF_{\bar p}(\cdot,\ep(\check\bfv)).
%\end{align}
Moreover,
$$\mathcal L[\hat{\boldsymbol{\vartheta}}^{n_k},\check{\boldsymbol{\vartheta}}^{m_k},\overline\bv^{k},\overline\zeta^{k},\overline B_t^{k}]=\mathcal L[{\boldsymbol{\vartheta}}^{n_k},{\boldsymbol{\vartheta}}^{m_k},\bv^k,\zeta^k,B_t^k]$$
on $\chi_{\tt ext}$ for all $k\in\mathbb N$.
%
%$$\bar{\prst}\big((\hat\varrho_{n_k},\hat\bfu_{n_k},\hat\bfq_{n_k},\check\varrho_{m_k},\check\bfu_{m_k},\check\bfq_{m_k},\bar W_k)\in\,\,\cdotp\big)=\nu^{n_k,m_k}(\cdot).$$
%$$\bar{\prst}\big((\hat\varrho,\hat\bfu,\hat\bfq,(\hat\psi,\hat\bfm),\check\varrho,\check\bfu,\check\bfq,(\check\psi,\check\bfm),\bar W)\in\,\,\cdotp\big)=\nu(\cdot).$$
Observe that, in particular, $\mathcal L[\hat{\boldsymbol{\vartheta}}^{n_k},\check{\boldsymbol{\vartheta}}^{m_k},\overline\bv^{k},\overline\zeta^{k},\overline B_t^{k}]$ converges weakly to the measure $\mathcal L[\hat{\boldsymbol{\vartheta}},\check{\boldsymbol{\vartheta}},\overline\bv,\overline\zeta,\overline B_t]$.
%$\mu$ defined by
%$$\mu(\cdot)=\bar{\prst}\big((\hat\varrho,\hat\bfu,\hat\bfq,\check\varrho,\check\bfu,\check\bfq)\in\,\,\cdotp\big).$$
As in Section \ref{sec:compN} we can
show that the limit objects are martingale solutions to \eqref{weakeps} defined on the same stochastic basis $(\bar{\Omega},\bar{\mathfrak F},(\bar{\mathfrak F}_t),\bar{\mathbb P})$, where $(\bar{\mathfrak F}_t)_{t\geq0}$ is the $\bar{\mathbb P}$-augmented canonical filtration of $(\hat{\boldsymbol{\vartheta}},\check{\boldsymbol{\vartheta}},\overline\bv,\overline\zeta,\overline B_t)$.
%In order to verify the assumption of Theorem \ref{diagonal} 
We employ the pathwise uniqueness result from Proposition \ref{prop:unique}. Therefore, the solutions $(\hat\eta,\hat\bu)$ and $(\check\eta,\check\bu)$ coincide $\bar{\mathbb P}$-a.s. and we have
{\small
\begin{align*}
\mathcal L&[\hat{\boldsymbol{\vartheta}},\check{\boldsymbol{\vartheta}},\overline\bv,\overline\zeta,\overline B_t]\Big(({\boldsymbol{\vartheta}}_1,{\boldsymbol{\vartheta}}_2,\bv,\zeta,B_t)\in\chi_{\tt ext}:\;(\eta_1,\bu_1)=(\eta_2,\bu_2)\Big)
%&\quad=\bar{\prst}\Big((\hat\bfv,\hat\bfF)=(\check\bfv,\check\bfF)\Big)
=\bar{\mathbb P}((\hat\eta,\hat\bu)=(\check\eta,\check\bu))=1.
\end{align*}
}
Now, we have all in hand to apply the Gy\"ongy--Krylov theorem from \cite[Chapter 2, Theorem 2.10.3]{BFHbook}. 
It implies that the original sequence ${\boldsymbol{\vartheta}}^N$ defined on the initial probability space $(\Omega,\mathfrak F,\mathbb P)$ converges in probability in the topology of $\chi_\bu\times\chi_{\nabla\bu}\times\chi_\eta\times\chi_{f,g}^2$ to the random variable
$${\boldsymbol{\vartheta}}:=\bigg[\mathbb I_{\mathcal O_{\zeta_\epsilon}} \mathbf u,
\mathbb I_{\mathcal O_{\zeta_\epsilon}}\nabx{\mathbf u}, \eta,
\begin{pmatrix}
((\partial_t\eta,\mathbb I_{\mathcal O_{\zeta_\epsilon}}\mathbf u),
(\partial_t\eta,\mathscr F^{\zeta_\epsilon}\partial_t\eta))
\\
((\partial_t\eta,\mathbb I_{\mathcal O_{\zeta_\epsilon}} \mathbf u),(0, \mathbf u -\mathscr F^{\zeta_\epsilon}\partial_t\eta))\end{pmatrix}\bigg].$$
Therefore, we finally deduce that $(\eta,\bu)$ is a probabilistically strong solution to \eqref{weakeps}. This completes the proof of Theorem \ref{thm:mainveps}.

\section{The nonlinear regularised problem}
\label{sec:regul}
The aim of this section is to obtain a solution of the regularised problem thus completing the proof of Theorem \ref{thm:main'}. This is done via a fixed point argument for which the main point is proving compactness of the mapping $(\zeta,\bv)\mapsto (\eta,\bu)$. Given a bounded sequence $(\zeta_n,\bv_n)$
 this is achieved in three steps:
\begin{itemize}
\item We first need to establish tightness of the probability laws, where the main difficulty arises for the velocity field.
\item We apply the stochastic compactness method based on Jakubowski's extension of the Skorokhod representation theorem. This yields a.s. convergence on a new probability space.
\item We apply a Gy\"ongy-Krylov type argument to obtain convergence in probability of the original sequence on the original probability space. This requires the pathwise uniqueness from Proposition \ref{prop:unique}. 
\end{itemize}
Suppose there is a sequence of processes $(\zeta^n,\bv^n)$ which are $(\mathfrak F_t)$-progressively measurable and bounded in
$$L^p(\Omega;C(\overline I\times\Gamma))\times L^p(\Omega;L^2(I;L^2(\mathcal O\cup S_\alpha)))$$
for some sufficiently large $p$. Now apply Theorem \ref{thm:mainveps} yielding a sequence $(\eta^n,\bu^n)$ of solutions
to \eqref{weakeps}. By the energy equality from Proposition \ref{prop:unique} 
 we obtain
\begin{align}
&\sup_I\Vert \eta^n \Vert_{W^{2,2}(\Gamma )}^{2} 
+
\epsilon\sup_I\Vert \eta^n \Vert_{W^{3,2}(\Gamma )}^{2} \lesssim1,\label{GalerkinConvEta1np}
\\
&\sup_I\Vert\partial_t\eta^n \Vert_{L^2(\Gamma )}^{2}\lesssim 1,\label{GalerkinConvEtanp}
\\
&\epsilon\int_I\Vert\partial_t\eta^n \Vert_{W^{2,2}(\Gamma )}^{2}\dt\lesssim 1,
\\
&\sup_I\Vert\bu^n \Vert_{L^2(\mathcal{O}_{\zeta^n_\epsilon})}^{2}\lesssim 1,
\label{GalerkinVolnp1}
\\
&\int_I\Vert\nabx \bu^n \Vert_{L^{2}(\mathcal{O}_{\zeta^n_\epsilon})}^2\dt\lesssim1. \label{GalerkinVolnp2}
\end{align}
In addition, for any $s\in(0,\frac{1}{2})$, it follows from $\bu^n\circ\bm{\varphi}_{\zeta_\epsilon^n } =\bn\partial_t\eta^n $, \eqref{GalerkinVolnp2} and the trace theorem that
\begin{equation}
\begin{aligned}
\label{GalerkinConvEta2np}
\int_I\Vert \partial_t\eta^n \Vert_{W^{s,2}(\Gamma)}^2\dt
\lesssim 1
\end{aligned}
\end{equation}
holds. 

\subsection{Tightness of the velocity sequence}
\label{ssec:compv}
The effort of this subsection is to prove tightness of the law of
$\bu^n$. Similarly to Lemma \ref{lemma:fg} we obtain the following result.
\begin{lemma}\label{lemma:fg'}
The laws of
\begin{align*}
((\partial_t\eta^n,\mathbb I_{\mathcal O_{\zeta^n_\epsilon}}\mathbf u^n),
(\partial_t\eta^n,\mathscr F^{\zeta^n_\epsilon}\partial_t\eta^n))\quad\text{and}\quad
((\partial_t\eta^n,\mathbb I_{\mathcal O_{\zeta_\epsilon^n}}\mathbf u^n),
(0,\mathbf u^n-\mathscr F^{\zeta_\epsilon^n}\partial_t\eta^n))
\end{align*}
on $(L^2(I;X'\times X),\tau_\sharp)$ are tight.
\end{lemma}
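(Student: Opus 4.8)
The plan is to reproduce the proof of Lemma~\ref{lemma:fg} essentially verbatim, the only structural change being that the frozen pair $(\zeta^n,\bv^n)$ now varies with the sequence index while $(\eta^n,\bu^n)$ are the probabilistically strong solutions of \eqref{weakeps} supplied by Theorem~\ref{thm:mainveps} rather than Galerkin approximations. Working with $X=L^2(\Gamma)\times W^{-s,2}(\mathcal O\cup S_\alpha)$ and $Z=W^{s_0,2}(\Gamma)\times W^{s_0,2}(\mathcal O\cup S_\alpha)$ for $0<s<s_0<\tfrac14$, one sets $g_n:=(\partial_t\eta^n,\mathbb I_{\mathcal O_{\zeta^n_\epsilon}}\bu^n)$ and lets $f_n$ be the second pair of the object in question — either $(\partial_t\eta^n,\mathscr F^{\zeta^n_\epsilon}\partial_t\eta^n)$ or $(0,\bu^n-\mathscr F^{\zeta^n_\epsilon}\partial_t\eta^n)$ — and then verifies the hypotheses (a)--(d) of Theorem~\ref{thm:auba} with constants that are either deterministic or bounded in $L^p(\Omega)$; tightness then follows as in Lemma~\ref{lemma:fg}, from the compactness interpretation of Theorem~\ref{thm:auba} combined with Markov's inequality applied to those random constants. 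Boundedness (a) and the compactness assumption (d) are immediate from \eqref{GalerkinConvEta1np}--\eqref{GalerkinConvEta2np}, the boundedness of $\mathscr F^{\zeta^n_\epsilon}$ on $W^{s_0,2}(\Gamma)$ from Proposition~\ref{prop:musc}/Corollary~\ref{cor:3.5} (and the boundedness of extension by zero on $W^{s,2}$ for $s<\tfrac14$), and the usual compact embeddings $L^2(\Gamma)\hookrightarrow\hookrightarrow W^{-s_0,2}(\Gamma)$, $W^{s,2}\hookrightarrow\hookrightarrow W^{-s_0,2}$. For the approximability (b) one uses the spatial mollifier $b\mapsto(b)_\kappa$ on $\Gamma$ satisfying \eqref{eq:molly} and sets $f_{n,\kappa}(t):=\big((\partial_t\eta^n(t))_\kappa,\mathscr F^{\zeta^n_\epsilon(t)}((\partial_t\eta^n(t))_\kappa)\big)$ in the first case; then \eqref{eq:molly} and the extension estimates of Proposition~\ref{prop:musc}/Corollary~\ref{cor:3.5} give $\|f_{n,\kappa}-f_n\|_X\lesssim\kappa^{s_0-s}\|\partial_t\eta^n\|_{W^{s_0,2}(\Gamma)}$ and $\|f_{n,\kappa}\|_Z\lesssim\kappa^{-s_0}\|\partial_t\eta^n\|_{L^2(\Gamma)}$, which close (b) by \eqref{GalerkinConvEtanp} and \eqref{GalerkinConvEta2np}.

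For the equi-continuity (c) I would, as in Lemma~\ref{lemma:fg}, decompose $\langle g_n(t)-g_n(t+\tau),f_{n,\kappa}(t)\rangle=(I)+(II)$, where $(II)$ is the contribution of the time-variation of the extension operator, i.e.\ the pairing of $\mathbb I_{\mathcal O_{\zeta^n_\epsilon}}\bu^n(t+\tau)$ with $\mathscr F^{\zeta^n_\epsilon(t)}((\partial_t\eta^n(t))_\kappa)-\mathscr F^{\zeta^n_\epsilon(t+\tau)}((\partial_t\eta^n(t))_\kappa)$, and $(I)$ is obtained by inserting the solenoidal, boundary-compatible pair $f_{n,\kappa}(t)$ into the weak equation \eqref{weakeps}. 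The term $(II)$ is bounded by a Lipschitz-in-$\zeta$ estimate for $\mathscr F^{\zeta}$ applied to the smooth function $(\partial_t\eta^n(t))_\kappa$ — the analogue of \eqref{eq:FN2}, which for the full extension operator follows from the explicit dependence of $\mathcal J_\zeta$ and $\mathscr K_\zeta$ on $\zeta$ — together with $\|\zeta^n_\epsilon(t)-\zeta^n_\epsilon(t+\tau)\|_{W^{1,2}(\Gamma)}\lesssim c(\epsilon)\,\tau\,\|\zeta^n\|_{C(\overline I\times\Gamma)}$ (a standard property of the temporal regularisation, uniform in $n$), yielding $|(II)|\leq c(\kappa,\epsilon)\,\tau\,(1+\|\zeta^n\|^3_{C(\overline I\times\Gamma)})$; Markov's inequality then makes $\tau^{-1}|(II)|$ small on a set of large probability, uniformly in $n$, since $\zeta^n$ is bounded in $L^p(\Omega;C(\overline I\times\Gamma))$. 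In $(I)$, every deterministic term — the biharmonic and $\epsilon$-regularising terms, the boundary Jacobian term, the $\partial_t\mathcal J$ term, the two halves of the convective term with $\bv^n_\epsilon$ (using $\|\bv^n_\epsilon\|_{L^\infty}\lesssim c(\epsilon)\|\bv^n\|_{L^2(I;L^2)}$), the dissipation and the It\^{o}--Stratonovich correction — is estimated, with the help of \eqref{GalerkinConvEta1np}--\eqref{GalerkinConvEta2np}, \eqref{eq:molly}, \eqref{musc1} and \eqref{musc2}, by $c(\kappa,\epsilon)\,\tau^{1/2}\big(1+\|\zeta^n\|_{C(\overline I\times\Gamma)}+\|\bv^n\|_{L^2(I;L^2(\mathcal O\cup S_\alpha))}+\|\nabx\bu^n\|_{L^2(I\times\mathcal O_{\zeta^n_\epsilon})}\big)$, while the stochastic term is controlled via Markov's inequality and the It\^{o} isometry (integrating by parts with $\divy\bsigma=0$) to give a bound of order $\kappa^{-1}K^{-2}$, exactly as in Lemma~\ref{lemma:fg}. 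Since $\|\nabx\bu^n\|_{L^2(I\times\mathcal O_{\zeta^n_\epsilon})}$ is bounded by a deterministic constant by \eqref{GalerkinVolnp2} and $\|\zeta^n\|_{C(\overline I\times\Gamma)}$, $\|\bv^n\|_{L^2(I;L^2)}$ are bounded in $L^p(\Omega)$, a further Markov argument closes (c) uniformly in $n$.

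The only genuinely new point, relative to Lemma~\ref{lemma:fg}, concerns the regularisation of the fluid part $\bu^n-\mathscr F^{\zeta^n_\epsilon}\partial_t\eta^n$ of the second object: it must be replaced by a pair that is still solenoidal and has \emph{vanishing} trace on the moving interface $\partial\mathcal O_{\zeta^n_\epsilon}$, so that $f_{n,\kappa}$ remains an admissible test pair in \eqref{weakeps} and — crucially — the noise term is not seen in $\langle g_n(t)-g_n(t+\tau),f_{n,\kappa}(t)\rangle$. Since $\bu^n$ is no longer a finite combination of the $\mathcal J_{\zeta^n_\epsilon}\bm{w}_i$, I would transport to the reference configuration, $\bar\bu^n:=\mathcal J^{-1}_{\zeta^n_\epsilon}\bu^n\in L^\infty(I;L^2(\mathcal O))\cap L^2(I;W^{1,2}_{\Div}(\mathcal O))$ with $\bar\bu^n$ solenoidal and its boundary trace on $\Gamma$ determined by $\partial_t\eta^n$, apply there a fixed divergence-free regularisation $\mathcal M_\kappa$ whose boundary trace is governed by $(\partial_t\eta^n)_\kappa$ — built, exactly as the $\bm{w}_i^\kappa$ in the proof of Lemma~\ref{lemma:fg}, from a spatial mollification of the interior solenoidal part together with a Stokes extension of $(\partial_t\eta^n)_\kappa\bn$ — and push forward, setting $f_{n,\kappa}:=\big(0,\mathcal J_{\zeta^n_\epsilon}\mathcal M_\kappa\bar\bu^n-\mathscr F^{\zeta^n_\epsilon}((\partial_t\eta^n(t))_\kappa)\big)$, which is then solenoidal with zero trace on $\partial\mathcal O_{\zeta^n_\epsilon}$. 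With this choice $(I)$ and $(II)$ are estimated as above but now without any stochastic contribution, and this gives the tightness of the second object. I expect the construction of such a boundary-compatible, solenoidal regulariser directly on the moving domain — rather than any new analytic estimate — to be the main technical obstacle; everything else is a transcription of Lemma~\ref{lemma:fg} in which $\|\zeta^n\|_{C(\overline I\times\Gamma)}$ and $\|\bv^n\|_{L^2(I;L^2(\mathcal O\cup S_\alpha))}$ play the role of the (now $L^p(\Omega)$-bounded) data.
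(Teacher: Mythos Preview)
Your proposal is correct and follows the paper's own argument closely: verify (a)--(d) of Theorem~\ref{thm:auba} using the energy bounds \eqref{GalerkinConvEta1np}--\eqref{GalerkinConvEta2np}, decompose $(I)+(II)$, and handle the stochastic term by Markov plus It\^{o} isometry. Two minor implementation differences are worth noting. First, for the approximant $f_{n,\kappa}$ and the estimate of $(II)$, the paper uses directly the operator $\mathcal E_\kappa^{\zeta_\epsilon^n}$ of Corollary~\ref{cor:3.5} and its time-derivative bound $\|\partial_t\mathcal E_\kappa^{\zeta_\epsilon^n}(\xi)\|_{L^a}\lesssim\|\xi_\kappa\,\partial_t\zeta_\epsilon^n\|_{L^a}$ (writing $(II)$ as $\int_t^s\partial_r\mathcal E_\kappa^{\zeta_\epsilon^n(r)}\,\dd r$), rather than an abstract Lipschitz-in-$\zeta$ analogue of \eqref{eq:FN2}; note that the full extension $\mathscr F^\zeta$ of Proposition~\ref{prop:musc} is \emph{not} built from $\mathcal J_\zeta$, so your justification of that Lipschitz estimate should instead point to Corollary~\ref{cor:3.5}. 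Second, for the boundary-compatible solenoidal regularisation of $\bu^n-\mathscr F^{\zeta_\epsilon^n}\partial_t\eta^n$, the paper simply invokes \cite[Proposition~2.28 and Lemma~A.13]{lengeler2014weak} rather than constructing it via Piola pull-back; your construction is more explicit but serves the same purpose.
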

\begin{proof}
As in Lemma \ref{lemma:fg} we must verify the assumptions from Theorem \ref{thm:auba}. First of all, boundedness in $L^2(I;X'\times X)$ follows from
By \eqref{GalerkinConvEta1np}--\eqref{GalerkinConvEta2np} and the properties of the extension $\mathscr F^{\zeta_n}$ from Proposition \ref{prop:musc}.
For $(b)$  
%we observe that we may assume that
% a regularizer $b\mapsto (b)_\kappa$ exists such that for any $s,a\in \R$
%\begin{align}
%\label{eq:molly}\|b-(b)_\kappa\|_{W^{a,2}(\Gamma)}\leq 
%\,c\kappa^{s-a}\|b\|_{W^{s,2}(\Gamma)},\quad b\in W^{s,2}(\Gamma).
%\end{align}
%The estimate is well-known for $a,s\in\mathbb N_0$, while the general case follows by interpolation and duality. 
%Next 
we consider again the regularisation of $\partial_t\eta^n$ with parameter $\kappa>0$
 and set for $n\in\mathbb N$
\[
f_{n,\kappa}(t):=((\partial_t\eta^n(t))_\kappa,
\mathcal E_\kappa^{\zeta_\epsilon^n}(\partial_t\eta^n(t))),
\]
where $\mathcal E_\kappa^{\zeta_\epsilon^n}$ is given in Corollary \ref{cor:3.5}.
We find by the continuity of the mollification operator from \eqref{eq:molly} and the continuity of $\mathcal E_\kappa^{\zeta_\epsilon^n}$ from 
Corollary \ref{cor:3.5} that for a.e.\ $t\in (0,T)$ and $s<1/2$
 \begin{align}\label{eq:fNkappa}
 \|f_{n,\kappa}-f_n\|_{L^2(\Gamma)\times L^{2}(\mathcal O\cup S_{\alpha})}
\leq \,c\kappa^{s}\|\partial_t\eta^n\|_{W^{s,2}(\Gamma)},
 \end{align}
 which can be made arbitrarily small in $L^2$ by choosing $\kappa$ appropriately, 
cf. \eqref{GalerkinConvEta2np}.
% This allows to deduce the $L^1(I;Z)$ bound (for $s=s_y$) as well as the convergence property (choosing $s=s_x$).
Similarly, we have
 \begin{align*}\label{eq:fNkappa}
 \|f_{n,\kappa}\|_{W^{1,2}(\Gamma)\times W^{1,2}(\Omega\cup S_{\alpha})}
\leq \,c\kappa^{-1}\|\partial_t\eta^n\|_{L^{2}(\Gamma)}.
 \end{align*}
Clearly, if $f_N\rightharpoonup f$ in $L^2(I;X)$ then
 we can deduce a converging subsequence such that $f_{N,\kappa}\weakto f_{\kappa}$ (for some $ f_{\kappa}$) in 
$L^2(I;X)$ for any $\kappa>0$,
 which implies (b).

 For (c) we have to control
$\langle g_n(t)-g_n(s),f_{n,\kappa}(t) \rangle$, 
where $g_n:=(\partial_t\eta^n,\mathbb I_{\mathcal O_{\zeta^n_\epsilon}}\mathbf u^n)$
and hence decompose
 \begin{align*}
&\langle g_n(t)-g_n(s),f_{n,\kappa}(t)\rangle
\\
&=\big(\langle g_n(t),(\partial_t\eta^n(t))_\kappa,
\mathcal E_\kappa^{\zeta^n_\epsilon(t)}(\partial_t\eta^n(t)))\rangle
\\
&\quad-\langle g_n(s),(\partial_t\eta_n(t))_\kappa,\mathcal E_\kappa^{\zeta_\epsilon^n(s)}
(\partial_t\eta^n(t))\rangle \big)
\\
&\quad +\langle g_N(s),(0,\mathcal E_\kappa^{\zeta^n_\epsilon(t)}(\partial_t\eta^n(t)))
-\mathcal E_\kappa ^{\zeta^n_\epsilon(s)}(\partial_t\eta^n(t))\rangle=:(I)+(II).
 \end{align*}
We begin estimating $(II)$ to find that
 \begin{align*}
(II)& = \int_{\mathcal O_{\zeta_\epsilon^n(s)}} \bfu^n(s)
\cdot \int_t^s\partial_r \mathcal E_\kappa^{\zeta_\epsilon^n(r)}(\partial_t\eta^n(t))\,\dd r\dx
\\&
\leq\,c\|\bfu^n(s)\|_{L^2(\mathcal O_{\zeta_\epsilon^n(s)})}\|\partial_t(\zeta_\epsilon^n)_\kappa\|_{L^\infty(I\times \Gamma)}\|\partial_t\eta^n(t)\|_{L^\infty(I;L^{2}(\Gamma))}|t-s|\\
&
\leq\,c(\kappa)\|\bfu^n(s)\|_{L^2(\mathcal O_{\zeta_\epsilon^n(s)})}\|\partial_t\zeta_\epsilon^n\|_{L^\infty(I;L^2(\Gamma)}\|\partial_t\eta^n(t)\|_{L^\infty(I;L^{2}(\Gamma))}|t-s|.
 \end{align*}
using Corollary \ref{cor:3.5}. 
By \eqref{GalerkinConvEtanp} and \eqref{GalerkinVolnp1} we thus get
 \begin{align*}
|(II)|
&
\leq\,c(\kappa)\|\partial_t\zeta_\epsilon^n\|_{L^\infty(I;L^2(\Gamma)}|t-s|\\
&
\leq\,c(\kappa,\epsilon)\|\zeta^n\|_{L^\infty(I;L^2(\Gamma)}|t-s|.
 \end{align*}
As in the proof of Lemma \ref{lemma:fg} we conclude for $K>0$ that
\begin{align*}
\mathbb P\big(|t-s|^{-1}|(II)|\geq K\big)
&\leq \,\frac{c(\kappa,\epsilon)}{K},
\end{align*}
 provided $\zeta\in L^1(\Omega;L^\infty(I;L^2(\Gamma))$.
%we further  have 
%\begin{align*}
%|(II)|\lesssim |t-s|^{1/2}.
%\end{align*}
%By \eqref{eq:molly} and \eqref{eq:projection} the last term can be estimated by
%\begin{align*}
%\|\mathcal P^N((\partial_t\eta^N)_\kappa(t)\|_{W^{3,2}(\omega)}
%\leq c\|(\partial_t\eta^N)_\kappa(t)\|_{W^{3,2}(\Gamma)}\\&\leq c\kappa^{-3}\|\partial_t\eta^N(t)\|_{L^2(\omega)},
%\end{align*}
%which is bounded to to \eqref{GalerkinConvEtaNp} such that 
%we conclude
%$$(II)\leq\,c(\kappa)\|s-t|^\frac{1}{6}.$$
  The term $(I)$ is estimated using \eqref{weakeps} obtaining (this can be justified by It\^{o}'s formula similarly to the proof of Proposition \ref{prop:unique})
  {\small
\begin{equation}
\begin{aligned}
\label{weakeps'}
(I)&=- \int_t^s
\int_{\Gamma} \big(\epsilon   \Dely (\partial_t\eta^n(t))_\kappa \,\partial_t\Dely\eta^n
  +  \Dely (\partial_t\eta^n(t))_\kappa \,\Dely\eta^n
\big)\dy\,\dd r
\\&- \epsilon\int_t^s
\int_{\Gamma} \naby^3 (\partial_t\eta^n(t))_\kappa :\naby^3\eta^n\dy\,\dd r
\\&+\int_t^s\int_{\Gamma} \frac{1}{2}\bn_{\zeta^n_\epsilon } \cdot \bn (\partial_t\eta^n(t))_\kappa \,  \partial_t\zeta_\epsilon \,\partial_t\eta^n \,
 \vert\det(\naby\bm{\varphi}_{\zeta_\epsilon })\vert\dy\,\dd r
\\
&+
\int_t^s \int_{\mathcal{O}_{\zeta_\epsilon}}\Big(  \bu\cdot \partial_r\mathcal E_\kappa^{\zeta_\epsilon^n(r)}
(\partial_t\eta^n(t)) )
-
\frac{1}{2}((\bv_\epsilon\cdot\nabx)\bu)\cdot \mathcal E_\kappa^{\zeta_\epsilon^n(r)}
(\partial_t\eta^n(t))   \Big) \dx\,\dd r
\\
&+
 \int_t^s\int_{\mathcal{O}_{\zeta}}\Big(  
\frac{1}{2}((\bv_\epsilon\cdot\nabx)\mathcal E_\kappa^{\zeta_\epsilon^n(r)}
(\partial_t\eta^n(t))) \cdot \bu
-\nabx \bu:\nabx \mathcal E_\kappa^{\zeta_\epsilon^n(r)}
(\partial_t\eta^n(t))) \Big) \dx\,\dd r
 \\
&
 +
 \frac{1}{2}\int_t^s
\int_{\Gamma} 
((\bsigma\cdot\naby)(\bsigma\cdot\naby)\partial_t \eta ) (\partial_t\eta^n(t))_\kappa
\dy\,\dd r
\\&- 
\int_t^s\int_{\Gamma}  
( (\bsigma\cdot\naby)\partial_t \eta^n ) (\partial_t\eta^n(t))_\kappa\dy\dd B_r.
\end{aligned}
\end{equation}
}
Due to \eqref{eq:molly},  Corollary \ref{cor:3.5} and the uniform estimates \eqref{GalerkinConvEta1np}--\eqref{GalerkinConvEta2np},
it is clear that all terms can be estimated by $c(\kappa)|t-s|^{1/2}(1+\|\zeta^n\|_{C(I\times\Gamma)}+\|\nabx\mathbf u^n\|_{L^2(\mathcal O_{\zeta^n_\epsilon})})$
except for the last one. As in the proof of Lemma \ref{lemma:fg} it can be controlled via
%Here we have by Markov's inequality and It\^o-isometry (using also $\divy \bsigma=0$)
\begin{align*}
\mathbb P\bigg(|t-s|^{-1/2}\bigg|&\int_t^s\int_{\Gamma}  
( (\bsigma\cdot\naby)\partial_t \eta^n) 
(\partial_t\eta^n)_\kappa \dy\dd B_r\bigg|>K\bigg)\\
%&\leq\,\frac{1}{K^2}\mathbb E\bigg||t-s|^{-1/2}\int_t^s\int_{\Gamma}  
%( \bsigma\partial_r \eta^n ) 
%\cdot\naby(\partial_r\eta^n)_\kappa \dy\dd B_r\bigg|^2\\
%&=\,\frac{1}{K^2}\mathbb E\bigg[|t-s|^{-1/2}\int_t^s\bigg|\int_{\Gamma}  
%( (\bsigma\partial_r \eta^n ) \cdot\naby
%(\partial_r\eta^n)_\kappa \dy\bigg|^2\,\dd r\bigg]\\
%&\lesssim \,\frac{1}{K^2}\mathbb E\bigg[|t-s|^{-1/2}\int_t^s\|\partial_r\eta^n\|_{L^2(\Gamma)}
%\|\naby(\partial_r\eta^n)_\kappa)\|_{L^2(\Gamma)}\,\dd r\bigg]\\
&\lesssim \,\frac{\kappa^{-1}}{K^2}\mathbb E\bigg[|t-s|^{-1/2}
\int_t^s\|\partial_t\eta^n(r)\|_{L^2(\Gamma)}\|\naby(\partial_t\eta^n(t))_\kappa\|_{L^2(\Gamma)}\,\dd r\bigg]\\
&\lesssim \,\frac{1}{K^2}\mathbb E\bigg[|t-s|^{-1/2}
\int_t^s\|\partial_t\eta^n(r)\|_{L^2(\Gamma)}\|\partial_t\eta^n(t)\|_{L^2(\Gamma)}\,\dd r\bigg]
\lesssim \,\frac{\kappa^{-1}}{K^2}
\end{align*}
using \eqref{GalerkinConvEtanp}.
This can be made arbitrarily small for $K$ large.
 Noticing that property $(4)$ from Theorem \ref{thm:auba} 
follows by the usual compactness in (negative) Sobolev spaces, we conclude tightness of the law of 
$((\partial_t\eta^n,\mathbb I_{\mathcal O_{\zeta^n_\epsilon}}\mathbf u^n),
(\partial_t\eta^n,\mathscr F^{\zeta^n_\epsilon}\eta^N))$
on $(L^2(I;X'\times X),\tau_\sharp)$.

The tightness for $((\partial_t\eta^n,\mathbb I_{\mathcal O_{\zeta^n_\epsilon}}\mathbf u^n),
(0,\mathbf u^n-\mathscr F^{\zeta_\epsilon^n}\partial_t\eta^n))$ follows along the same lines, the only difference
being the regularisation of
$$f_n:=(0,\mathbf u^n-\mathscr F^{\zeta_\epsilon^n}\partial_t\eta^n).$$
While $\mathscr F^{\zeta_\epsilon^n}\partial_t\eta^n$ can be replaced by
$\mathcal E_\kappa ^{\zeta_\epsilon^n(s)}(\partial_t\eta^n(t))$
as above, we need to regularise $\mathbf u^n$ accordingly to preserve the homogeneous boundary conditions
on $f_n$, which can by done by using \cite[Proposition 2.28 and Lemma A.13]{lengeler2014weak}. 
This leads to a function $f_{n,\kappa}$ which has zero boundary conditions and thus only the fluid part is seen in
the expression $\langle g_n(t)-g_n(s),f_{n,\kappa}(t)\rangle$ 
(where $g_n:=(\partial_t\eta^n,\mathbb I_{\mathcal O_{\zeta^n_\epsilon}}\mathbf u^n)$
 as above). In particular, the noise is not seen and we conclude  that
\begin{align*}
\langle g_n(t)-g_n(s),f_{n,\kappa}(t)\rangle\leq\, c(\kappa)|t-s|^{1/2}(1+\|\zeta^n\|_{C(\overline I\times\Gamma)}+\|\nabx\mathbf {u}^n\|_{L^2(\mathcal O_{\zeta_\epsilon^n})})
\end{align*}
and we obtain again the claimed tightness.
\end{proof}

\subsection{Passage to the limit}
With the bounds from \eqref{GalerkinConvEta1np}--\eqref{GalerkinConvEta2np} at hand, we wish to obtain compactness.
% For this, we define the path space
%\begin{align*}
%\chi= \chi_{B} 
%\times\chi_{{\bu}}^2\times\chi_{\nabla{\bu}}\times\chi_{\eta}^2\times \chi_{f,g}^2
%\end{align*}
%where
%\begin{align*}
%&
%\chi_{B}=C(\overline{I}), 
%\quad \chi_{\bu}= 
%\big(L^2(I;L^2_{\divx}(\mathcal{O}\cup S_{L/2})),w\big),\quad \chi_{\nabla{\bu}}= 
%\big(L^2(I;L^2(\mathcal{O}\cup S_{L/2})),w\big),
%\\&\chi_{\eta}
%=\big(W^{1,\infty}(I;L^{2}(\Gamma)),w^\ast\big)\cap
%\big(L^\infty(I;W^{3,2}(\Gamma)),w^\ast\big)\cap \big(W^{1,2}(I;W^{s,2}(\Gamma )),w\big),
%\\&
%\chi_{f,g}=\big(L^2(I;X'\times X),\tau_\sharp\big),
%\end{align*}
%with $s\in(0,\frac{1}{2})$.
We consider the same path space $\chi$ as in Section \ref{sec:compN}.
 From \eqref{GalerkinConvEta1Np}--\eqref{GalerkinConvEta2Np}
(together the Alaoglu-Burbaki Theorem) and Lemma \ref{lemma:fg} we obtain similarly to Proposition \ref{prop:skorokhodN}:

\begin{proposition}\label{prop:skorokhod'}
There exists a complete probability space $(\tilde\Omega,\tilde{\mathfrak F},\tilde{\mathbb P})$ 
with $\chi$-valued random variables \footnote{The fact that the variable $\mathscr F^{\tilde\zeta^n_\epsilon}\partial_t\tilde\eta^n$ can be represented in that form follows from the measureability of $\mathscr F$ on the pathspace, cf. Proposition \ref{prop:musc}.}
$$\tilde{\boldsymbol{\Theta}}^n
:=
\bigg[\tilde B_t^n,\mathbb I_{\mathcal O_{\tilde\zeta^n_\epsilon}}\tilde{\mathbf u}^n,\tilde\bv^n,
\mathbb I_{\mathcal O_{\tilde{\zeta}^n_\epsilon}}\nabx\tilde{\mathbf u}^n, \tilde{\eta}^n ,\tilde\zeta^n,
\begin{pmatrix}
((\partial_t\tilde{\eta}^n,\mathbb I_{\mathcal O_{\tilde{\zeta}^n_\epsilon}}\tilde{\mathbf u}^n),
(\partial_t\tilde\eta^n,\mathscr F^{\tilde\zeta^n_\epsilon}\partial_t\tilde\eta^n))
\\
((0,\mathbb I_{\mathcal O_{\tilde\zeta^n_\epsilon}}\tilde{\mathbf u}^n),(\partial_t\tilde\eta^n,\tilde{\mathbf u}^n
-\mathscr F^{\tilde\zeta^n_\epsilon}\partial_t\tilde\eta^n))\end{pmatrix}\bigg],$$
for $n\in\mathbb N$ and
$$\tilde{\boldsymbol{\Theta}}:=\bigg[\tilde B_t,\mathbb I_{\mathcal O_{\tilde\zeta_\epsilon}}\tilde{\mathbf u}^n,\tilde\bv,
\mathbb I_{\mathcal O_{\tilde\zeta_\epsilon}}\nabx\tilde{\mathbf u}, \tilde\eta,\tilde\zeta ,\begin{pmatrix}((\partial_t\eta,\mathbb I_{\mathcal O_{\zeta_\epsilon}}\mathbf u^n),
(\partial_t\tilde\eta,\mathscr F^{\tilde\zeta_\epsilon}\partial_t\tilde\eta))\\
((0,\mathbb I_{\mathcal O_{\tilde\eta}}\tilde{\mathbf u}),(\partial_t\tilde\eta,\tilde{\mathbf u}
-\mathscr F^{\tilde\zeta_\epsilon}\partial_t\tilde\eta))\end{pmatrix}\bigg]$$
such that
\begin{enumerate}
 \item[(a)] for all $n\in \mathbb N$ the law of 
$\tilde{\boldsymbol{\Theta}}^n$
on $\chi$ is given by
\begin{align*}
 \mathcal{L}\bigg[B_t,\mathbb I_{\mathcal O_{\zeta^n_\epsilon}}\mathbf u^n,\bfv^n,
\mathbb I_{\mathcal O_{\zeta_\epsilon^n}}\nabx \mathbf u^n, \eta^n,\zeta^n ,
\begin{pmatrix}
((\partial_t\eta^n,\mathbb I_{\mathcal O_{\zeta^n_\epsilon}}\mathbf u^n),
(\partial_t\eta^n,\mathscr F^{\zeta^n}\partial_t\eta^n))
\\
((0,\mathbb I_{\mathcal O_{\zeta^n_\epsilon}}\mathbf u^n),(\partial_t\eta^n,\mathbf u^n-\mathscr F^{\zeta^n_\epsilon}\partial_t\eta^n))\end{pmatrix}\bigg]
\end{align*}
 \item[(b)] $\tilde{\boldsymbol{\Theta}}^n$ converges $\,\tilde{\mathbb P}$-almost surely to 
$\tilde{\boldsymbol{\Theta}}$
 in the topology of $\chi$, i.e.
\begin{align} \label{wWS116}
\begin{aligned}
\tilde B_t^n&\rightarrow \tilde B_t \quad \mbox{in}\quad C(\overline I) \ \tilde{\mathbb P}\mbox{-a.s.}, 
\\
\mathbb I_{\mathcal O_{\tilde\zeta_\epsilon^n}}\tilde{\mathbf u}^n,\,\tilde\bv^n&\weakto \mathbb I_{\mathcal O_{\tilde\zeta_\epsilon}}\tilde{\mathbf u} ,\,\tilde\bv\quad \mbox{in}\quad L^2(I;L^{2}(\mathcal O\cup S_{\alpha}))\ \tilde{\mathbb P}\mbox{-a.s.}, 
\\
\mathbb I_{\mathcal O_{\tilde\zeta^n_\epsilon}}\nabx\tilde{\mathbf u}^n&\weakto  \mathbb I_{\mathcal O_{\tilde\zeta_\epsilon}}\nabx \tilde{\mathbf u} \quad \mbox{in}\quad L^2(I;L^{2}(\mathcal O\cup S_{\alpha}))\ \tilde{\mathbb P}\mbox{-a.s.}, 
\\
\tilde \eta^n&\weakto^\ast \tilde \eta \quad \mbox{in}\quad L^\infty(I;W^{3,2}(\Gamma)) \ \tilde{\mathbb P}\mbox{-a.s.}, \\
\tilde \eta^n&\weakto^\ast \tilde \eta\quad \mbox{in}\quad W^{1,\infty}(I;L^{2}(\Gamma)) \ \tilde{\mathbb P}\mbox{-a.s.}, 
\\
\tilde \eta^n&\weakto \tilde \eta \quad \mbox{in}\quad W^{1,2}(I;W^{2,2}(\Gamma)) \ \tilde{\mathbb P}\mbox{-a.s.}, \\
\tilde\zeta^n&\rightarrow\tilde\zeta \quad \mbox{in}\quad W^{1,2}(I\times \Gamma) \ \tilde{\mathbb P}\mbox{-a.s.}, 
\end{aligned}
\end{align}
as well as (recalling the definition of $\tau_\sharp$ from \eqref{eq:tau})
\begin{align}\label{wWS116BA}\begin{aligned}
\int_{I}\int_{\mathcal O\cup S_{\alpha}}&\mathbb I_{\mathcal O_{\tilde\zeta_\epsilon^n}}
\bfu^n\cdot\mathscr F^{\tilde\zeta^n_\epsilon}\partial_t\tilde\eta^n\dxt+\int_{I}\int_\Gamma|\partial_t\tilde\eta^n|^2\,\dy \dt
\\
&\longrightarrow \int_{I}\int_{\mathcal O\cup S_{\alpha}}\mathbb I_{\mathcal O_{\tilde\zeta_\epsilon}}
\tilde\bfu\cdot\mathscr F^{\tilde\zeta_\epsilon}\partial_t\eta\dxt+\int_{I}\int_\Gamma|\partial_t\tilde\eta|^2\,\dy \dt
\end{aligned}
\end{align}
and
\begin{align}\label{wWS116CA}\begin{aligned}
\int_{I}\int_{\mathcal O\cup S_{\alpha}}&
\mathbb I_{\mathcal O_{\tilde\zeta_\epsilon^n}}
\tilde\bfu^n\cdot(\tilde\bfu^n-\mathscr F^{\tilde\zeta^n_\epsilon}\partial_t\tilde\eta^n)\dxt
\\
&\longrightarrow \int_{I}\int_{\mathcal O\cup S_{\alpha}}
\mathbb I_{\mathcal O_{\tilde\zeta_\epsilon}}
\tilde\bfu\cdot(\tilde\bfu-\mathscr F^{\tilde\zeta_\epsilon}\partial_t\tilde\eta)\dxt
\end{aligned}
\end{align}
$\tilde{\mathbb P}$-a.s.
\end{enumerate}
\end{proposition}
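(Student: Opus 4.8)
The plan is to establish tightness of the joint law of the tuple displayed in the statement on the path space $\chi$ introduced in Section \ref{sec:compN} and then to invoke Jakubowski's version of the Skorokhod representation theorem. Tightness is checked coordinate by coordinate. The law of $B_t$ is a fixed Gaussian measure on $C(\overline I)$, hence tight. By the energy equality of Proposition \ref{prop:unique} the processes $\mathbb I_{\mathcal O_{\zeta^n_\epsilon}}\bfu^n$ and $\mathbb I_{\mathcal O_{\zeta^n_\epsilon}}\nabx\bfu^n$ take values, $\mathbb P$-a.s., in a fixed ball of $L^2(I;L^2(\mathcal O\cup S_\alpha))$ (cf.\ \eqref{GalerkinVolnp1}--\eqref{GalerkinVolnp2}), and $\eta^n$ in a fixed ball of $L^\infty(I;W^{3,2}(\Gamma))\cap W^{1,\infty}(I;L^2(\Gamma))\cap W^{1,2}(I;W^{2,2}(\Gamma))$ (cf.\ \eqref{GalerkinConvEta1np}--\eqref{GalerkinConvEta2np}; here $\epsilon>0$ is fixed); since such balls are sequentially compact and metrizable in the weak, resp.\ weak-$\ast$, topology of the ambient separable spaces, the corresponding laws are trivially tight. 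Tightness of $\bv^n$ in $\chi_{\bu}$ and of $\zeta^n$ in $\chi_\zeta$ follows from the assumed uniform $L^p(\Omega;\cdot)$-bounds together with the (compactly embedded) regularity available for these given sequences. Finally, tightness of the two $\tau_\sharp$-valued components is exactly Lemma \ref{lemma:fg'}, which rests on the Aubin--Lions-type Theorem \ref{thm:auba}. A finite product of tight families being tight, the joint law is tight on $\chi$.

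Since $\chi$ carries a quasi-Polish topology, Prokhorov's theorem combined with Jakubowski's theorem (\cite{jakubow}; see also \cite[Theorem A.1]{on2}) then yields, after passing to a non-relabelled subsequence, a complete probability space $(\tilde\Omega,\tilde{\mathfrak F},\tilde{\mathbb P})$ carrying random variables $\tilde{\boldsymbol{\Theta}}^n$, $\tilde{\boldsymbol{\Theta}}$ with the equality of laws in (a) and $\tilde{\boldsymbol{\Theta}}^n\to\tilde{\boldsymbol{\Theta}}$ $\tilde{\mathbb P}$-a.s.\ in $\chi$. Unpacking this convergence coordinatewise gives the list \eqref{wWS116}; in particular, applying the defining property \eqref{eq:tau} of $\tau_\sharp$ to the two last coordinates produces \eqref{wWS116BA} and \eqref{wWS116CA}. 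The one point requiring more than ``reading off'' is that the limiting $\tau_\sharp$-coordinate retains the structural form $(\partial_t\tilde\eta,\mathscr F^{\tilde\zeta_\epsilon}\partial_t\tilde\eta)$: here one uses that, by Proposition \ref{prop:musc} and Corollary \ref{cor:3.5}, the assignment $(\zeta,\xi)\mapsto\mathscr F^{\zeta_\epsilon}(\xi-\mathscr K_{\zeta_\epsilon}(\xi))$ is Borel measurable — indeed sequentially continuous for the relevant topologies — on the path space, so that the representation $\mathscr F^{\tilde\zeta^n_\epsilon}\partial_t\tilde\eta^n$ passes to the tilded variables, and the a.s.\ convergences $\tilde\zeta^n\to\tilde\zeta$, $\partial_t\tilde\eta^n\weakto\partial_t\tilde\eta$ together with the bounds of Proposition \ref{prop:musc} identify the weak limit as $\mathscr F^{\tilde\zeta_\epsilon}\partial_t\tilde\eta$.

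The main obstacle is precisely this last circle of issues — obtaining tightness in the \emph{fine} topology $\tau_\sharp$ and controlling the $\zeta$-dependence of the non-local solenoidal extension $\mathscr F$ — whereas the remaining marginals only require standard weak-compactness and Prokhorov arguments. As an immediate consequence one records, adding \eqref{wWS116BA} and \eqref{wWS116CA} in the spirit of the decomposition \eqref{eq:convrhouN},
\begin{align*}
\int_{I}\int_{\mathcal O_{\tilde\zeta^n_\epsilon}}|\tilde\bfu^n|^2\dxt+\int_{I}\int_\Gamma|\partial_t\tilde\eta^n|^2\,\dy\dt\longrightarrow\int_{I}\int_{\mathcal O_{\tilde\zeta_\epsilon}}|\tilde\bfu|^2\dxt+\int_{I}\int_\Gamma|\partial_t\tilde\eta|^2\,\dy\dt
\end{align*}
$\tilde{\mathbb P}$-a.s., which combined with the weak convergences in \eqref{wWS116} and the uniform convexity of the $L^2$-norm upgrades to $\partial_t\tilde\eta^n\to\partial_t\tilde\eta$ in $L^2(I;L^2(\Gamma))$ and $\mathbb I_{\mathcal O_{\tilde\zeta^n_\epsilon}}\tilde\bfu^n\to\mathbb I_{\mathcal O_{\tilde\zeta_\epsilon}}\tilde\bfu$ in $L^2(I;L^2(\mathcal O\cup S_\alpha))$; this strong compactness is what will allow the subsequent passage to the limit in the convective term and in the Stratonovich correction of \eqref{weakeps}.
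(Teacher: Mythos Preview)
Your proposal is correct and follows essentially the same route as the paper, which simply records that tightness follows from the uniform energy bounds \eqref{GalerkinConvEta1np}--\eqref{GalerkinConvEta2np} together with Alaoglu--Bourbaki (for the weak/weak-$\ast$ coordinates) and Lemma \ref{lemma:fg'} (for the $\tau_\sharp$-coordinates), and then invokes Jakubowski--Skorokhod exactly as in Proposition \ref{prop:skorokhodN}. Your write-up is in fact more explicit than the paper's terse ``similarly to'' reference: you spell out the marginal-to-joint tightness step, you address the structural identification of the limiting extension $\mathscr F^{\tilde\zeta_\epsilon}\partial_t\tilde\eta$ (which the paper relegates to a footnote), and your final paragraph on the $L^2$-norm convergence and the resulting strong compactness reproduces verbatim the argument the paper gives immediately \emph{after} the proposition---so strictly it is a corollary rather than part of the proof, but it is the right thing to record.
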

Now we introduce the filtration on the new probability space, 
which ensures the correct measurabilities of the new random variables.
%We denote by $\bfr_t$ the operator of restriction to the interval $[0,t]$ acting on various path spaces. In particular, if $X$ stands for one of the path spaces $L^r_{\mathrm loc}(0,\infty;L^r(\mathcal{O}))$ or $C_{\mathrm loc}([0,\infty),\mathfrak U_0)$ and $t\in[0,T]$, we define
%\begin{align}\label{restr}
%\bfr_t:X\rightarrow X|_{[0,t]},\quad f\mapsto f|_{[0,t]}.
%\end{align}
%Clearly, $ \bfr_t$ is a continuous mapping.
Let $(\tilde{\mathfrak F}_t)_{t\geq0}$ and $(\tilde{\mathfrak F}_t^{n})_{t\geq0}$ be the $\tilde{\mathbb P}$-augmented
canonical filtration on the variables $\tilde{\boldsymbol{\Theta}}$ and
$\tilde{\boldsymbol{\Theta}}^n $, respectively, that is\footnote{Some of the variables are not continuous in time. For those, one can define $\sigma_t$ as the history of a random distribution, cf. \cite[Chapter 2.8]{BFHbook}}
%\todo{add filtrations}
\begin{align*}
\tilde{\mathfrak F}_t=\sigma\big[\sigma_t(\tilde B_t)\cup \sigma_t(\mathbb I_{\mathcal O_{\tilde\zeta_\epsilon}}\tilde{\mathbf u})\cup \sigma_t(\tilde\bv) \cup \sigma_t(
\mathbb I_{\mathcal O_{\tilde\zeta_\epsilon}}\nabx\tilde{\mathbf u}) \cup \sigma_t( \tilde\eta) \cup \sigma_t(\tilde\zeta ) \big]
\end{align*}
for $t\in I$ and similarly for $\tilde{\mathfrak F}_t^n$.

By \cite[Theorem 2.9.1]{BFHbook} the weak equation continuous to hold on the new probability space.
Combining \eqref{wWS116BA} and \eqref{wWS116CA} we have
\begin{align*}
\int_{I}\int_{\mathcal O_{\tilde\zeta^n_\epsilon}}&|\tilde\bfu^n|^2\dxt
+\int_{I}\int_\Gamma|\partial_t\tilde\eta^n|^2\,\dy \dt
\\&
\longrightarrow \int_{I}\int_{\mathcal O_{\tilde\zeta_\epsilon}}|\tilde\bfu|^2\dxt
+\int_{I}\int_\Gamma|\partial_t\tilde\eta|^2\,\dy \dt
\end{align*}
 $\tilde{\mathbb P}$-a.s. By uniform convexity of the $L^2$-norm this implies
\begin{align*}
\tilde\eta^n&\rightarrow\tilde\eta\quad \mbox{in}\quad W^{1,2}(I;L^2(\Gamma)) ,\\
\mathbb I_{\mathcal O_{\tilde\zeta^n_\epsilon}}\tilde{\mathbf u}^n&\rightarrow \mathbb I_{\mathcal O_{\tilde\zeta_\epsilon}}\tilde{\mathbf u}
 \quad \mbox{in}\quad L^2(I;L^{2}(\mathcal O\cup S_{\alpha}))\ \tilde{\mathbb P}\mbox{-a.s.}.
\end{align*}
%This is sufficient to pass to the limit and the weak formulation of the equations (now that all terms except for the convective term can be treated 
%by \eqref{wWS116}).
%As far as the energy balance is concerned, this is even easier since there is no noise. By \eqref{wWS116} and the lower-semi continuity of the involved quantities
%we can pass to the limit (obtaining only an inequality).
We can now apply Proposition \ref{prop:unique}
and argue as in Section \ref{sec:cip} to obtain convergence of the original sequence, in particular,
\begin{align*}
\eta^n&\rightarrow\eta\quad \mbox{in}\quad C(\overline I\times\Gamma) \quad \mbox{in probability},\\
\mathbb I_{\mathcal O_{\zeta^n_\epsilon}}{\mathbf u}^n&\rightarrow \mathbb I_{\mathcal O_{\zeta_\epsilon}}{\mathbf u}
 \quad \mbox{in}\quad L^2(I;L^{2}(\mathcal O\cup S_{\alpha}))\quad \mbox{in probability}.
\end{align*}
This yields due the uniform-in-probability estimates
(for arbitrary $p<\infty$)
\begin{align*}
\eta^n&\rightarrow\eta\quad \mbox{in}\quad L^p(\Omega;C(\overline I\times\Gamma)),\\
\mathbb I_{\mathcal O_{\zeta^n_\epsilon}}{\mathbf u}^n&\rightarrow \mathbb I_{\mathcal O_{\zeta_\epsilon}}{\mathbf u}
 \quad \mbox{in}\quad L^p(\Omega;L^2(I;L^{2}(\mathcal O\cup S_{\alpha}))),
\end{align*}
which gives the desired compactness of the fixed-point map.

\section{The limit $\epsilon\rightarrow 0$}
\label{sec:limit}
For fixed $\epsilon>0$, Theorem \ref{thm:main'} yields the existence of a probabilistically strong solution $(\eta^\epsilon,\bu^\epsilon)$ to the regularized fluid-structure system defined on a given stochastic basis. Using the energy balance we obtain
\begin{align}
&\Big(\sup_I\Vert \eta^\epsilon \Vert_{W^{2,2}(\Gamma )}^{2}
+
\epsilon\sup_I\Vert \eta^\epsilon \Vert_{W^{3,2}(\Gamma )}^{2}\Big) \lesssim1,\label{GalerkinConvEta1epsp}
\\
&\sup_I\Vert\partial_t\eta^\epsilon \Vert_{L^2(\Gamma )}^{2}\lesssim 1,\label{GalerkinConvEtaepsp}
\\
&\epsilon\int_I\Vert\partial_t\eta^\epsilon \Vert_{W^{2,2}(\Gamma )}^{2}\dt\lesssim 1,
\\
&\sup_I\Vert\bu^\epsilon \Vert_{L^2(\mathcal{O}_{(\eta^\epsilon)_\epsilon})}^{2}\lesssim 1,
\label{GalerkinVolneps1}
\\
&\int_I\Vert\nabx \bu^\epsilon \Vert_{L^{2}(\mathcal{O}_{(\eta^\epsilon)_\epsilon})}^2\dt\lesssim1. \label{GalerkinVolneps2}
\end{align}
In addition, for any $s\in(0,\frac{1}{2})$, it follows from $\bu^\epsilon\circ\bm{\varphi}_{(\eta^\epsilon)_\epsilon} =\bn\partial_t\eta^\epsilon $, \eqref{GalerkinVolnp2} and the trace theorem that
\begin{equation}
\begin{aligned}
\label{GalerkinConvEta2epsp}
\int_I\Vert \partial_t\eta^\epsilon \Vert_{W^{s,2}(\Gamma)}^2\dt
\lesssim 1.
\end{aligned}
\end{equation}
Moreover, we can argue as in Proposition \ref{lemma:fg'} to obtain:
\begin{lemma}\label{lemma:fg''}
The laws of
\begin{align*}
((\partial_t\eta^\epsilon,\mathbb I_{\mathcal O_{(\eta^\epsilon)_\epsilon}}\mathbf u^\epsilon),
(\partial_t\eta^\epsilon,\mathscr F^{\eta^\epsilon} \partial_t\eta^\epsilon))\quad\text{and}\quad
((\partial_t\eta^\epsilon,\mathbb I_{\mathcal O_{(\eta^\epsilon)_\epsilon}}\mathbf u^\epsilon),
(\partial_t\eta^\epsilon,\mathbf u^\epsilon-\mathscr F^{\eta^\epsilon} \partial_t\eta^\epsilon))
\end{align*}
on $(L^2(I;X'\times X),\tau_\sharp)$ are tight.
\end{lemma}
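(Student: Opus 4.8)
The plan is to repeat the argument of Lemma~\ref{lemma:fg'} essentially verbatim, with the frozen geometry $\zeta_\epsilon^n$ replaced by the self‑regularisation $(\eta^\epsilon)_\epsilon$ of the actual solution and the frozen drift $\bv_\epsilon$ replaced by $\bu^\epsilon_\epsilon$. Concretely, I would verify the four hypotheses of Theorem~\ref{thm:auba} for
\[
g_\epsilon=(\partial_t\eta^\epsilon,\mathbb I_{\mathcal O_{(\eta^\epsilon)_\epsilon}}\mathbf u^\epsilon),\qquad f_\epsilon=\big((\partial_t\eta^\epsilon,\mathbb I_{\mathcal O_{(\eta^\epsilon)_\epsilon}}\mathbf u^\epsilon),(\partial_t\eta^\epsilon,\mathscr F^{\eta^\epsilon}\partial_t\eta^\epsilon)\big),
\]
and then, exactly as in Lemma~\ref{lemma:fg'}, deduce $\tau_\sharp$‑tightness on $L^2(I;X'\times X)$ by combining the convergence of the duality pairing furnished by Theorem~\ref{thm:auba} with weak‑$L^2$ compactness (Alaoglu--Bourbaki). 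Hypothesis~(a), boundedness in $L^2(I;X'\times X)$, is immediate from \eqref{GalerkinConvEta1epsp}--\eqref{GalerkinConvEta2epsp}, the mapping estimate for $\mathscr F^{\eta^\epsilon}$ in Proposition~\ref{prop:musc}, and the boundedness of extension by zero on $W^{s,2}$ for $s<\tfrac14$; since the dataset in \eqref{datasetAlone} is deterministic these bounds hold $\mathbb P$‑a.s.\ with a deterministic constant. For~(b) I would take $f_{\epsilon,\kappa}(t):=\big((\partial_t\eta^\epsilon(t))_\kappa,\mathcal E_\kappa^{(\eta^\epsilon)_\epsilon(t)}(\partial_t\eta^\epsilon(t))\big)$ and obtain the approximation estimate from \eqref{eq:molly} and Corollary~\ref{cor:3.5} together with the crude bound $\|f_{\epsilon,\kappa}\|_{L^1(I;Z)}\le C(\kappa)$, as in Lemma~\ref{lemma:fg'}; hypothesis~(d) is the usual compact embedding $X'\hookrightarrow\hookrightarrow Z'$ in negative Sobolev spaces.

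All the work is in hypothesis~(c), i.e.\ controlling $\langle g_\epsilon(t)-g_\epsilon(s),f_{\epsilon,\kappa}(t)\rangle$ by a constant \emph{independent of $\epsilon$} times $|t-s|^\alpha(A_\epsilon(t)+1)$ with $A_\epsilon\in L^1(I)$ uniformly. I would split this into $(I)+(II)$ as in Lemma~\ref{lemma:fg'}. For the geometry term $(II)$ one applies Corollary~\ref{cor:3.5} with $a=2$; the decisive point — and the reason the constants do not degenerate as $\epsilon\to0$, unlike in Sections~\ref{ssec:comp} and~\ref{ssec:compv} where the frozen geometry was merely continuous — is that $(\eta^\epsilon)_\epsilon$ is the regularisation of a function satisfying $\sup_I\|\partial_t\eta^\epsilon\|_{L^2(\Gamma)}\lesssim1$ by \eqref{GalerkinConvEtaepsp}, so $\|\partial_t(\eta^\epsilon)_\epsilon\|_{L^\infty(I;L^2(\Gamma))}\lesssim1$ with an $\epsilon$‑independent constant; hence $|(II)|\le C(\kappa)|t-s|$ and Markov's inequality disposes of it. For $(I)$ I would test the weak formulation \eqref{weakeps} (now with $(\eta^\epsilon)_\epsilon$, $\bu^\epsilon_\epsilon$ in place of $\zeta_\epsilon$, $\bv_\epsilon$) against $f_{\epsilon,\kappa}$, justified by It\^o's formula as in the proof of Proposition~\ref{prop:unique}; all deterministic contributions are estimated by $C(\kappa)|t-s|^{1/2}\big(1+\|\eta^\epsilon\|_{C(\overline I\times\Gamma)}+\|\nabx\mathbf u^\epsilon\|_{L^2(I\times\mathcal O_{(\eta^\epsilon)_\epsilon})}\big)$ via \eqref{eq:molly} and Corollary~\ref{cor:3.5}. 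Here the potentially $\epsilon$‑unstable regularising terms, schematically $\epsilon\int\Dely(\partial_t\eta^\epsilon)_\kappa\,\partial_t\Dely\eta^\epsilon$ and $\epsilon\int\naby^3(\partial_t\eta^\epsilon)_\kappa{:}\naby^3\eta^\epsilon$, are handled by writing $\epsilon=\epsilon^{1/2}\cdot\epsilon^{1/2}$ and absorbing one factor $\epsilon^{1/2}$ into the weighted bounds \eqref{GalerkinConvEta1epsp} and $\epsilon\int_I\|\partial_t\eta^\epsilon\|_{W^{2,2}(\Gamma)}^2\dt\lesssim1$, the remaining $\epsilon^{1/2}\le1$ being harmless. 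Finally, after integrating by parts in $\by$ and using $\divy\bsigma=0$, the stochastic term is controlled by Markov's inequality and the It\^o isometry exactly as in Lemma~\ref{lemma:fg'}, namely
\begin{align*}
\mathbb P\bigg(|t-s|^{-1/2}\bigg|\int_t^s\!\int_\Gamma\big((\bsigma\cdot\naby)\partial_t\eta^\epsilon\big)(\partial_t\eta^\epsilon(t))_\kappa\dy\dd B_r\bigg|>K\bigg)\lesssim\frac{\kappa^{-1}}{K^2},
\end{align*}
which is arbitrarily small for $K$ large by \eqref{GalerkinConvEtaepsp}. This gives~(c), hence tightness of the law of the first pair.

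For the second pair $\big((\partial_t\eta^\epsilon,\mathbb I_{\mathcal O_{(\eta^\epsilon)_\epsilon}}\mathbf u^\epsilon),(\partial_t\eta^\epsilon,\mathbf u^\epsilon-\mathscr F^{\eta^\epsilon}\partial_t\eta^\epsilon)\big)$ I would argue identically, replacing $\mathscr F^{\eta^\epsilon}\partial_t\eta^\epsilon$ by $\mathcal E_\kappa^{(\eta^\epsilon)_\epsilon(s)}(\partial_t\eta^\epsilon(t))$ and regularising $\mathbf u^\epsilon$ so as to preserve its homogeneous boundary values, using \cite[Proposition~2.28 and Lemma~A.13]{lengeler2014weak}; then the resulting test function vanishes on $\Gamma$, only the fluid component of $\langle g_\epsilon(t)-g_\epsilon(s),f_{\epsilon,\kappa}(t)\rangle$ survives, the stochastic integral disappears altogether, and the remaining estimates are as above. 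The main obstacle throughout is the bookkeeping of $\epsilon$ in step~(c); it is settled by the two observations made above — that the self‑regularised geometry inherits \emph{uniform}‑in‑$\epsilon$ time regularity from the solution, and that every term carrying a norm that degenerates as $\epsilon\to0$ is multiplied by a compensating power of $\epsilon$.
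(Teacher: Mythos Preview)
Your proposal is correct and follows the same route as the paper, which simply records that one can argue as in Lemma~\ref{lemma:fg'}. In fact you are more careful than the paper's one-line reference: you explicitly isolate the two places where the constants in the proof of Lemma~\ref{lemma:fg'} depended on $\epsilon$ (the time regularity of the geometry in $(II)$, and the artificial regularising terms in $(I)$) and explain why both are harmless here---the first because $\partial_t(\eta^\epsilon)_\epsilon$ inherits a uniform $L^\infty(I;L^2(\Gamma))$ bound directly from \eqref{GalerkinConvEtaepsp} without invoking the mollification, and the second by the $\epsilon^{1/2}\cdot\epsilon^{1/2}$ splitting against the weighted energy bounds.
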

Unfortunately, \eqref{GalerkinConvEta1epsp}--\eqref{GalerkinConvEta2epsp} is no longer sufficient  which is why we first improve the regularity of the shell by adapting a method from \cite{MuSc} (which we also applied in \cite{breit2021incompressible}).

\subsection{Higher regularity}
\label{sec:highreg}
As already explained in the introduction, the regularity arising from \eqref{GalerkinConvEta1epsp}--\eqref{GalerkinConvEta2epsp} is not sufficient to control the terms involving the Piola transform in the weak equation. Thus we aim at improving the spatial regularity of $\eta$ implementing ideas from \cite{MuSc}.

Finally, for some $h>0$, we let $\Delta_{h}^sf(\by)=h^{-s}(f(\by+\be_ih)-f(\by))$ for $i=1,2$ represent the fractional difference quotient in space in the direction $\be_i$. 
Now, for
\begin{align*}
D^{s,\mathscr{K}}_{-h,h}f:=\Delta_{-h}^s\Delta_{h}^s f
-\mathscr K_\eta(\Delta_{-h}^s\Delta_{h}^sf),
\end{align*}
where $s\in(0,\frac{1}{2})$, we consider the following as test function 
\begin{align*}
(\bm{\phi},\phi)=\big( \mathcal J_{\eta^\epsilon}^{-1}\big(\testE(D^{s,\mathscr{K}}_{-h,h}\eta^\epsilon)\big)\,,\, \iota_{\eta^\epsilon}^{-1}D^{s,\mathscr{K}}_{-h,h}\eta^\epsilon
\big).
\end{align*}
in the weak formulation of the regularized fluid-structure system (this can be justified by It\^{o}'s formula similarly to the proof of Proposition \ref{prop:unique}). By making the fourth order term the subject, we obtain 
{\small
\begin{align}
\int_{\Gamma}&\nonumber
 \Dely \mathscr{K}_\eta(\Delta_{-h}^s\Delta_{h}^s\eta^\epsilon) \Dely\eta^\epsilon \dy\dt
-
\int_{\Gamma}
 \Dely (\Delta_{-h}^s\Delta_{h}^s\eta^\epsilon) \Dely\eta^\epsilon \dy\dt
 \\&\nonumber+
\epsilon
\int_{\Gamma}
 \naby^3 \mathscr{K}_\eta(\Delta_{-h}^s\Delta_{h}^s\eta^\epsilon) :\naby^3\eta^\epsilon \dy\dt
-
 \epsilon
 \int_{\Gamma}
 \naby^3 (\Delta_{-h}^s\Delta_{h}^s\eta^\epsilon) :\naby^3\eta^\epsilon \dy\dt
\\&\nonumber+
\epsilon
\int_{\Gamma}
 \Dely \mathscr{K}_\eta(\Delta_{-h}^s\Delta_{h}^s\eta^\epsilon) \partial_t\Dely\eta^\epsilon \dy\dt
-
 \epsilon
 \int_{\Gamma}
 \Dely (\Delta_{-h}^s\Delta_{h}^s\eta^\epsilon)\partial_t\Dely\eta^\epsilon \dy\dt
\\&\nonumber=
\dd  \int_{\mathcal{O}_{(\eta^\epsilon)_\epsilon}}\bu^\epsilon  \cdot\testE(D^{s,\mathscr{K}}_{-h,h}\eta^\epsilon(t))\dx
+
\dd 
 \int_{\Gamma}   \partial_t\eta^\epsilon (D^{s,\mathscr{K}}_{-h,h}\eta^\epsilon(t))\dy
 \\&-
  \int_{\Gamma}   \partial_t\eta^\epsilon \partial_t (D^{s,\mathscr{K}}_{-h,h}\eta^\epsilon)\dy\dt
  \nonumber
\\&\nonumber-
\frac{1}{2}
\int_{\Gamma} 
\bn_{\eta^\epsilon} \cdot \bn^\top(D^{s,\mathscr{K}}_{-h,h}\eta^\epsilon) \,  \partial_t(\eta^\epsilon)_\epsilon\,\partial_t\eta^\epsilon  \,
 \vert\det(\naby\bm{\varphi}_{(\eta^\epsilon)_\epsilon})\vert
 \dy\dt
\\&\nonumber
-
\int_{\mathcal{O}_{(\eta^\epsilon)_\epsilon}}  \bu^\epsilon  \cdot  \partial_t(\testE(D^{s,\mathscr{K}}_{-h,h}\eta^\epsilon)) 
\dx\dt
\\&+\frac{1}{2}
\int_{\mathcal{O}_{(\eta^\epsilon)_\epsilon}}  ( ((\bu^\epsilon)_\epsilon\cdot\nabx) \bu^\epsilon)\cdot\testE(D^{s,\mathscr{K}}_{-h,h}\eta^\epsilon)
 \dx\dt 
 \nonumber
\\&\nonumber
-
\frac{1}{2}
\int_{\mathcal{O}_{(\eta^\epsilon)_\epsilon}}  
 (((\bu^\epsilon)_\epsilon\cdot\nabx )\testE(D^{s,\mathscr{K}}_{-h,h}\eta^\epsilon)) \cdot \bu^\epsilon 
 \dx\dt
 \\&+
 \int_{\mathcal{O}_{(\eta^\epsilon)_\epsilon}} 
 \nabx\bu^\epsilon:\nabx \testE(D^{s,\mathscr{K}}_{-h,h}\eta^\epsilon)
 \dx\dt 
 \nonumber
 \\&\nonumber
 -
 \frac{1}{2} 
\int_{\Gamma} 
((\bsigma\cdot\nabx)(\bsigma\cdot\nabx)\partial_t \eta^\epsilon )D^{s,\mathscr{K}}_{-h,h}\eta^\epsilon
\dy\dt 
+ 
\int_{\Gamma}  
( (\bsigma\cdot\nabx)\partial_t \eta^\epsilon ) 
D^{s,\mathscr{K}}_{-h,h}\eta^\epsilon \dy\dd B_t
  \\&=:I_1\dt+\ldots+I_{10}\dt.\label{galerkinweak1A}
\end{align}
}
a.s. First of all, note that
\begin{align*}
\int_{\Gamma}&
\big(
 \Dely (\Delta_{-h}^s\Delta_{h}^s\eta^\epsilon ) \Dely\eta^\epsilon   
 +
 \epsilon 
 \naby^3 (\Delta_{-h}^s\Delta_{h}^s\eta^\epsilon) :\naby^3\eta^\epsilon  
 +
 \epsilon 
 \Dely (\Delta_{-h}^s\Delta_{h}^s\eta^\epsilon)\partial_t\Dely\eta^\epsilon \big)\dy\dt
\\
&=-
 \int_{\Gamma}
 \vert\Dely  \Delta_{h}^s\eta^\epsilon \vert^2 \dy\dt
 -
 \epsilon
 \int_{\Gamma}
 \vert\naby^3  \Delta_{h}^s\eta^\epsilon \vert^2 \dy\dt
 -\frac{\epsilon}{2}
 \dd
 \int_{\Gamma}
 \vert\Dely  \Delta_{h}^s\eta^\epsilon \vert^2 \dy
%\\&
%\sim
% \Vert \eta^\epsilon \Vert_{W^{s+2,2}(\Gamma)}^2\dt
% +
% \epsilon
% \Vert \eta^\epsilon \Vert_{W^{s+3,2}(\Gamma)}^2\dt
% +
% \frac{\epsilon}{2}
% \dd
% \Vert \eta^\epsilon \Vert_{W^{s+2,2}(\Gamma)}^2
\end{align*}
and since the corrector $\mathscr K_{\eta^\epsilon} $ is spatially independent,
\begin{align*}
\int_{\Gamma}
 \Dely \mathscr K_{\eta^\epsilon} (\Delta_{-h}^s\Delta_{h}^s\eta^\epsilon ) \Dely\eta^\epsilon  \dy\dt
 =
0.
\end{align*}
Similarly, the two remaining $\epsilon$-terms are zero.
We now wish to take the $p$-th moment of the time integral of \eqref{galerkinweak1A} where $p\geq1$. 
To begin with, we have
\begin{align*}
\mathbb{E}\Big(\int_I I_1\dt \Big)^p 
&\lesssim \mathbb{E}\Big(\sup_I\Vert  \bu^\epsilon      \Vert_{L^{2}(\mathcal{O}_{(\eta^\epsilon)_\epsilon})}
\sup_I \Vert
\Delta_{-h}^s\Delta_{h}^s\eta^\epsilon 
\Vert_{L^{2}(\Gamma)}\Big)^p 
\\
&\lesssim \mathbb{E}\sup_I\Vert \bu^\epsilon \Vert_{L^{2}(\mathcal{O}_{(\eta^\epsilon)_\epsilon})}^{2p}
+
\mathbb{E}\sup_I \Vert
\eta^\epsilon 
\Vert_{W^{2,2}(\Gamma)}^{2p}
\end{align*}
by Proposition \ref{prop:musc}, where the right-hand side is uniformly bounded by \eqref{GalerkinConvEta1epsp} and \eqref{GalerkinVolneps1}.
Moreover,
\begin{align*}
\mathbb{E}\Big(\int_I I_2\dt \Big)^p 
&\lesssim 
\mathbb{E}\Big(
\sup_I\Vert\partial_t \eta^\epsilon  \Vert_{L^{2}(\Gamma)}
\Big[
\sup_I \Vert
\Delta_{-h}^s\Delta_{h}^s\eta^\epsilon 
\Vert_{L^{2}(\Gamma)}
+
\sup_I\vert \mathscr K_{\eta^\epsilon} (\Delta_{-h}^s\Delta_{h}^s\eta^\epsilon 
)\vert\Big]\Big)^p
\\
&\lesssim 
\mathbb{E}\Big(\sup_I\Vert\partial_t \eta^\epsilon  \Vert_{L^{2}(\Gamma)}
\Big[
\sup_I \Vert
\eta^\epsilon 
\Vert_{W^{2,2}(\Gamma)}
+
\sup_I\Vert
\Delta_{-h}^s\Delta_{h}^s\eta^\epsilon 
\Vert_{L^{1}(\Gamma)}\Big]
\Big)^p
\\
&\lesssim 
\mathbb{E}\sup_I\Vert \partial_t\eta^\epsilon  \Vert_{L^{2}(\Gamma)}^{2p}
+
\mathbb{E}
\sup_I \Vert
\eta^\epsilon 
\Vert_{W^{2,2}(\Gamma)}^{2p}
\end{align*}
which is again uniformly bounded.
Since we assume that $\eta$ (and consequently also $\partial_t\eta$) has mean value zero and $ \mathscr K_{\eta^\epsilon}$ maps to spatially independent functions we have $\int_\Gamma\partial_t\eta^\epsilon\,\partial_t\mathscr K_{\eta^\epsilon}(\Delta_{-h}^s\Delta_{h}^s\eta^\epsilon 
)\dy=0$. Thus it holds
\begin{align*}
 -I_3=\int_\Gamma| \partial_t\Delta_h^s\eta^\epsilon|^2\dy\lesssim \| \partial_t\eta^\epsilon\|_{W^{s,2}(\Gamma)}^2,
\end{align*}
which is uniformly bounded in $L^p(\Omega;L^1(I))$ by \eqref{GalerkinConvEta2epsp}.
Recalling the definition of $\bfvarphi_{\eta}$ from
\eqref{eq:bfvarphi} we further have by \eqref{GalerkinConvEta1epsp}, \eqref{GalerkinConvEtaepsp} and 2D Sobolev embeddings 
\begin{align*}
\mathbb E\Big(\int_II_4\dt\Big)&\lesssim \mathbb E\Big(\int_I \|\Delta_{-h}^s\Delta_h^s\eta^\epsilon\|_{L^\infty(\Gamma)} \big(1+\|\naby\eta^\epsilon\|_{L^3(\Gamma)}\big) \|\partial_t\eta^\epsilon\|^2_{L^3(\Gamma)}\dt\Big)^p\\
&\lesssim \mathbb E\Big(\int_I \|\eta^\epsilon\|_{W^{2,2}(\Gamma)}\|\partial_t\eta^\epsilon\|^2_{W^{s,2}(\Gamma)}\dt\Big)^p\\
&\lesssim \mathbb E\Big(\int_I \|\partial_t\eta^\epsilon\|^2_{W^{s,2}(\Gamma)}\dt\Big)^p
\end{align*}
which is again bounded by \eqref{GalerkinConvEta2epsp}.
By using Lemma \ref{lem:higherInt} (with $p=p'=2$, $\theta=s$ and $\tilde{a}=6$), and \eqref{GalerkinConvEta1epsp}--\eqref{GalerkinVolneps2} (together with the embedding $W^{2,2}(\Gamma)\hookrightarrow C^{0,\theta}(\Gamma)$ for all $\theta<1)$
%then provided that $ \bu^\epsilon\in W^{1,2}(\mathcal{O}_{(\eta^\epsilon)_\epsilon})$, $\eta^\epsilon \in C^{0,s}(\Gamma) \cap W^{1,3}(\Gamma)$ and $\partial_t\eta^\epsilon \in L^2(\Gamma)$ where $s\in(0,1)$ , 
we have that (for $\delta>0$ arbitrary)
{\small
\begin{align*}
\mathbb{E}\Big(\int_I &I_5\dt \Big)^p  
\\&\lesssim
\mathbb{E}\bigg(
\int_I\Big(1+\Vert \Delta_{-h}^s\Delta_{h}^s\eta^\epsilon  \Vert_{W^{1,3}(\Gamma)}\Big)\Vert \partial_t\eta^\epsilon \Vert_{L^2(\Gamma)}
\Vert \bu^\epsilon \Vert_{W^{1,2}(\mathcal{O}_{(\eta^\epsilon)_\epsilon})}
\dt
\\&
+
\int_I
\Vert (\Delta_{-h}^s\Delta_{h}^s\eta^\epsilon )\partial_t\eta^\epsilon \Vert_{L^{6/5}(\Gamma)}
\Vert \bu^\epsilon \Vert_{W^{1,2}(\mathcal{O}_{(\eta^\epsilon)_\epsilon})}
\dt
\bigg)^p
\\&\lesssim
\mathbb{E}
\bigg(
\sup_I
\Vert \partial_t\eta^\epsilon \Vert_{L^2(\Gamma)}
 \int_I\big(1
+
\Vert  \Delta_h^s\eta^\epsilon  \Vert_{W^{s+1,3}(\Gamma)} \big)\Vert  \bu^\epsilon \Vert_{ W^{1,2}(\mathcal{O}_{(\eta^\epsilon)_\epsilon})}\dt 
\\&
+
\sup_I
\Vert  \partial_t\eta^\epsilon \Vert_{L^{2}(\Gamma)}
 \int_I\Vert \Delta_{-h}^s\Delta_{h}^s\eta^\epsilon  \Vert_{ L^3(\Gamma)}  
\Vert \bu^\epsilon \Vert_{ W^{1,2}(\mathcal{O}_{(\eta^\epsilon)_\epsilon})}\dt
\bigg)^p
\\
&\lesssim
\mathbb{E}
\bigg(
 \int_I\big(1
+
\Vert \Delta_h^s \eta^\epsilon  \Vert_{W^{s+1,3}(\Gamma)} +\Vert \eta^\epsilon  \Vert_{ W^{2s,3}(\Gamma)} \big)\Vert  \bu^\epsilon \Vert_{ W^{1,2}(\mathcal{O}_{(\eta^\epsilon)_\epsilon})}\dt 
\bigg)^p
\\
&\leq \delta
\mathbb{E}
\bigg(
 \int_I
\Vert  \Delta_h^s\eta^\epsilon  \Vert_{W^{2,2}(\Gamma)}^2\dt\bigg)^p+c(\delta) \bigg(\int_I\big(\|\eta^\epsilon\|_{W^{2,2}(\Gamma)}^2+\Vert  \bu^\epsilon \Vert_{ W^{1,2}(\mathcal{O}_{(\eta^\epsilon)_\epsilon})}^2\big)\dt 
\bigg)^p
%&\lesssim  \bigg( 
%\mathbb{E}
%\sup_I
%\Vert  \partial_t\eta^\epsilon \Vert_{L^{2}(\Gamma)}^{2p}
%+
%\sup_I\Vert \eta^\epsilon  \Vert_{ W^{2,2}(\Gamma)}^{4p}
%+
%\mathbb{E}\Big( \int_I
%\Vert \bu^\epsilon \Vert_{ W^{1,2}(\mathcal{O}_{(\eta^\epsilon)_\epsilon})}^2\dt
%\Big)^{2p}\bigg)
\end{align*} 
}
where we have used the continuous embedding $W^{2,2}(\Gamma)\hookrightarrow W^{s+1,3}(\Gamma)$, $s\in(0,\frac{1}{2})$. The first term can be absorbed for $\delta$ small enough, while the second done is uniformly bounded, cf. \eqref{GalerkinConvEta1epsp} and \eqref{GalerkinVolneps2}.
Next, it follows from Proposition \ref{prop:musc}  that
{\small
\begin{align*}
\sup_I
\Vert\testE(D^{s,\mathscr{K}}_{-h,h}\eta^\epsilon ) \Vert_{W^{1,2}(\mathcal{O}_{(\eta^\epsilon)_\epsilon})}
&\lesssim
\sup_I\Big(
\Vert \Delta_{-h}^s\Delta_{h}^s\eta^\epsilon \Vert_{W^{1,2}(\Gamma)}
+
\Vert(\Delta_{-h}^s\Delta_{h}^s\eta^\epsilon )\naby\eta^\epsilon  \Vert_{L^{2}(\Gamma)}
\Big)
\\&\lesssim
\sup_I\Big(
\Vert\eta^\epsilon \Vert_{W^{2,2}(\Gamma)}
+
\Vert \Delta_{-h}^s\Delta_{h}^s\eta^\epsilon  \Vert_{L^{\infty}(\Gamma)}
\Vert\eta^\epsilon  \Vert_{W^{1,2}(\Gamma)}
\Big)
\\&\lesssim
\sup_I\Big(
\Vert \eta^\epsilon  \Vert_{W^{2,2}(\Gamma)}
+ 
\Vert \eta^\epsilon  \Vert_{W^{2,2}(\Gamma)}
^2 
\Big)
\end{align*}
}
therefore,
{\small
\begin{align*}
\mathbb{E}\Big(&\int_I(I_6+I_7+I_8) \dt\Big)^p\\
&\lesssim
\mathbb{E}\bigg(
\sup_I
\Vert\testE(D^{s,\mathscr{K}}_{-h,h}\eta^\epsilon ) \Vert_{W^{1,2}(\mathcal{O}_{\eta^\epsilon})}
\int_I\big(\Vert \bu^\epsilon \Vert_{W^{1,2}(\mathcal{O}_{(\eta^\epsilon)_\epsilon})}
^2+\Vert \bu^\epsilon \Vert_{W^{1,2}(\mathcal{O}_{(\eta^\epsilon)_\epsilon})}
\big)\dt
\bigg)^p
\\
&\lesssim
\mathbb{E}
\sup_I 
 \Vert\eta^\epsilon  \Vert_{W^{2,2}(\Gamma)}
^{4p}
+
\mathbb{E}\bigg(
\int_I\Vert \bu^\epsilon\Vert_{W^{1,2}(\mathcal{O}_{(\eta^\epsilon)_\epsilon})}^2 \dt\bigg)^p. 
\end{align*} }
This is uniformly bounded by \eqref{GalerkinConvEta1epsp} and \eqref{GalerkinVolneps2}.
Furthermore, since $\mathscr K_{\eta^\epsilon} $ is spatially independent, and $\Gamma $ is endowed with periodic boundary condition,
\begin{equation}
\begin{aligned}
 \mathbb{E}\Big(\int_I I_{9}\dt \Big)^p   
 &= 
  \mathbb{E}\bigg(\int_I\int_{\Gamma}  \partial_t \eta^\epsilon  \,(\bsigma\cdot\naby)(\bsigma\cdot\naby) \Delta_{-h}^s\Delta_{h}^s\eta^\epsilon 
  \dy\dt
  \bigg)^p
\\&
\lesssim
 \mathbb{E}\bigg(
 \int_I\Vert\partial_t \eta^\epsilon \Vert_{W^{s,2}(\Gamma)}
 \Vert(\bsigma\cdot\naby)(\bsigma\cdot\naby) \Delta_{-h}^s\Delta_{h}^s\eta^\epsilon 
  \Vert_{W^{-s,2}(\Gamma)}\dt
  \bigg)^p
\\&
\leq\delta
 \mathbb{E}\bigg(
 \int_I 
 \Vert\Delta_h^s \eta^\epsilon 
  \Vert_{W^{2,2}(\Gamma)}^2\dt
  \bigg)^p
  +
  c(\delta)\,
   \mathbb{E}\bigg(
 \int_I 
 \Vert \partial_t\eta^\epsilon 
  \Vert_{W^{s,2}(\Gamma)}^2\dt
  \bigg)^p,
 \end{aligned}
\end{equation} 
where $\delta>0$ is arbitrary.
Lastly, since $\mathscr K_{\eta^\epsilon} $ is spatially independent,
\begin{align*}
\int_II_{10} 
&=-
\int_I\int_{\Gamma}   \partial_t \eta^\epsilon    ((\bsigma\cdot\naby) \Delta_{-h}^s\Delta_{h}^s\eta^\epsilon )
  \dy\dd B_t.
\end{align*}
Therefore, by the Burkholder--Davis--Gundy inequality
\begin{equation}
\begin{aligned}
\mathbb{E}\sup_I\Big\vert\int_I I_{10} \dt\Big\vert^p  &\lesssim
\mathbb{E}\bigg(\int_I\Big(\int_{\Gamma}   \partial_t \eta^\epsilon    ((\bsigma\cdot\naby) \Delta_{-h}^s\Delta_{h}^s\eta^\epsilon )
  \dy\Big)^2\dt\bigg)^\frac{p}{2}
  \\&\lesssim
  \mathbb{E} \bigg(\int_I 
  \Vert \partial_t\eta^\epsilon \Vert_{L^2(\Gamma)}^2
   \Vert \naby \Delta_{-h}^s\Delta_{h}^s\eta^\epsilon \Vert_{L^2(\Gamma)}^2\dt 
   \bigg)^\frac{p}{2}
  \\&\lesssim
  \mathbb{E}\bigg( \Big(\sup_I 
  \Vert \partial_t\eta^\epsilon \Vert_{L^2(\Gamma)}^2
   \Big)^\frac{p}{2} 
   \Big(
   \sup_I\Vert \naby \Delta_{-h}^s\Delta_{h}^s\eta^\epsilon \Vert_{L^2(\Gamma)}^2
   \Big)^\frac{p}{2} 
   \bigg)
 \\&\lesssim
  \mathbb{E} \sup_I 
  \Vert \partial_t\eta^\epsilon \Vert_{L^2(\Gamma)}^{2p}
  +
    \mathbb{E} 
   \sup_I\Vert \eta^\epsilon \Vert_{W^{2,2}(\Gamma)}^{2p},
\end{aligned}
\end{equation}
bounded by
\eqref{GalerkinConvEta1epsp} and \eqref{GalerkinConvEtaepsp}.
If we combine everything, we obtain for $s\in(0,\frac{1}{2})$,
\begin{align}
\mathbb{E}\Big(\int_I\Vert\Delta_h^s\eta^\epsilon \Vert_{W^{s+2,2}(\Gamma )}^2\dt\Big)^p
+&
\epsilon
\mathbb{E}\Big(\int_I\Vert\Delta_h^s\eta^\epsilon \Vert_{W^{3,2}(\Gamma )}^2\dt\Big)^p
\nonumber
\\&+
\epsilon
\mathbb{E}\Big(\sup_I\Vert\Delta_h^s\eta^\epsilon \Vert_{W^{2,2}(\Gamma )}^2\Big)^p
\lesssim 1\label{GalerkinConvEta1NpS}
\end{align}
uniformly in $h$
and thus 
\begin{align}
\mathbb{E}\Big(\int_I\Vert\eta^\epsilon \Vert_{W^{s+2,2}(\Gamma )}^2\dt\Big)^p
+&
\epsilon
\mathbb{E}\Big(\int_I\Vert\eta^\epsilon \Vert_{W^{s+3,2}(\Gamma )}^2\dt\Big)^p
\nonumber
\\&+
\epsilon
\mathbb{E}\Big(\sup_I\Vert\eta^\epsilon \Vert_{W^{s+2,2}(\Gamma )}^2\Big)^p
\lesssim 1\label{GalerkinConvEta1NpS}
\end{align}
for all $p\geq1$.

\subsection{Passage to the limit}
With the bounds from \eqref{GalerkinConvEta1epsp}--\eqref{GalerkinConvEta2epsp} at hand, we wish to obtain compactness. For this, we define the path space
\begin{align*}
\chi_{\tt high}= \chi_{B} 
\times\chi_{{\bu}}\times\chi_{\nabla{\bu}}\times\chi^{\tt high}_{\eta}\times \chi_{f,g}^2
\end{align*}
where
\begin{align*}
\chi_{\eta}^{\tt high}
=&\big(W^{1,\infty}(I;L^{2}(\Gamma)),w^\ast\big)\cap
\big(L^\infty(I;W^{2,2}(\Gamma)),w^\ast\big)\cap \big(L^{2}(I;W^{s+2,2}(\Gamma )),w\big)
\\&\cap \big(W^{1,2}(I;W^{s,2}(\Gamma )),w\big)
\end{align*}
with $s\in(0,\frac{1}{2})$. From \eqref{GalerkinConvEta1epsp}--\eqref{GalerkinConvEta2epsp}, Lemma \ref{lemma:fg''} and \eqref{GalerkinConvEta1NpS} we obtain similarly to Proposition \ref{prop:skorokhod'}:
%\todo{better use $\epsilon_n$ to get a sequence?}

\begin{proposition}\label{prop:skorokhod''}
There exists a complete probability space $(\tilde\Omega,\tilde{\mathfrak F},\tilde{\mathbb P})$ 
with $\chi$-valued random variables
\begin{align*}
\tilde{\boldsymbol{\Theta}}^{\epsilon_n}:=\bigg[\tilde B_t^n,&\mathbb I_{\mathcal O_{(\tilde\eta^{\epsilon_n})_\epsilon}}\tilde{\mathbf u}^{\epsilon_n},
\mathbb I_{\mathcal O_{(\tilde\eta^{\epsilon_n})_\epsilon}}\nabx\tilde{\mathbf u}^{\epsilon_n}, \tilde\eta^{\epsilon_n} ,
\\&\begin{pmatrix}
((\partial_t\tilde{\eta}^{\epsilon_n},\mathbb I_{\mathcal O_{(\tilde{\eta}^{\epsilon_n})_\epsilon}}\tilde{\mathbf u}^{\epsilon_n}),
(\partial_t\tilde\eta^{\epsilon_n},\mathscr F^{(\tilde\eta^{\epsilon_n})_\epsilon} \partial_t\tilde\eta^{\epsilon_n}))
\\
((\partial_t\tilde\eta^{\epsilon_n},\mathbb I_{\mathcal O_{(\tilde\eta^{\epsilon_n})_\epsilon}}\tilde{\mathbf u}^{\epsilon_n}),(0,\tilde{\mathbf u}^{\epsilon_n}
-\mathscr F^{(\tilde\eta^{\epsilon_n})_\epsilon} \partial_t\tilde\eta^{\epsilon_n}))\end{pmatrix}\bigg],
\end{align*}
for $n\in\mathbb N$ and
$$\tilde{\boldsymbol{\Theta}}:=\bigg[\tilde B_t,\mathbb I_{\mathcal O_{\tilde\eta}}\tilde{\mathbf u},
\mathbb I_{\mathcal O_{\tilde\eta}}\nabx \tilde{\mathbf u}, \tilde\eta,
\begin{pmatrix}
((\partial_t\tilde\eta,\mathbb I_{\mathcal O_{\tilde\eta}}\tilde{\mathbf u}),
(\partial_t\tilde\eta,\mathscr F^{\tilde\eta}\partial_t\tilde\eta))
\\
((0,\mathbb I_{\mathcal O_{\tilde\eta}}\tilde{\mathbf u}),(\partial_t\tilde\eta,\tilde{\mathbf u}
-\mathscr F^{\tilde\eta}\partial_t\tilde\eta))\end{pmatrix}\bigg]$$
such that
\begin{enumerate}
 \item[(a)] For all $n\in \mathbb N$ the law of 
$\tilde{\boldsymbol{\Theta}}^{\epsilon_n}$
on $\chi$ is given by
{\small
\begin{align*}
 \mathcal{L}\bigg[B_t^{n}&,\mathbb I_{\mathcal O_{(\eta^{\epsilon_n})_{\epsilon_n}}}\mathbf u^{\epsilon_n},
\mathbb I_{\mathcal O_{(\eta^{\epsilon_n})_{\epsilon_n}}}\nabx\mathbf u^{\epsilon_n}, \eta^{\epsilon_n} ,
\\&\begin{pmatrix}
((\partial_t\eta^{\epsilon_n},\mathbb I_{\mathcal O_{(\eta^{\epsilon_n})_{\epsilon_n}}}\mathbf u^{\epsilon_n}),
(\partial_t\eta^{\epsilon_n},\mathscr F^{(\eta^{\epsilon_n})_{\epsilon_n}} \partial_t\eta^{\epsilon_n}))
\\
((0,\mathbb I_{\mathcal O_{(\eta^{\epsilon_n})_{\epsilon_n}}}\mathbf u^{\epsilon_n}),(\partial_t\eta^{\epsilon_n},\mathbf u^{\epsilon_n}-\mathscr F^{(\eta^{\epsilon_n})_{\epsilon_n}} \partial_t\eta^{\epsilon_n}))\end{pmatrix}\bigg]
\end{align*}
}
 \item[(b)] $\tilde{\boldsymbol{\Theta}}^{\epsilon_n}$ converges $\,\tilde{\mathbb P}$-almost surely to 
$\tilde{\boldsymbol{\Theta}}$
 in the topology of $\chi_{\tt high}$, i.e.
\begin{align} \label{wWS116}
\begin{aligned}
\tilde B_t^{n}&\rightarrow \tilde B_t \quad \mbox{in}\quad C(\overline I) \ \tilde{\mathbb P}\mbox{-a.s.}, 
\\
\mathbb I_{\mathcal O_{(\tilde\eta^{\epsilon_n} )_\epsilon}}\tilde{\mathbf u}^{\epsilon_n}&\weakto \mathbb I_{\mathcal O_{\tilde\eta}}\tilde{\mathbf u} \quad \mbox{in}\quad L^2(I;L^{2}(\mathcal O\cup S_{\alpha}))\ \tilde{\mathbb P}\mbox{-a.s.}, 
\\
\mathbb I_{\mathcal O_{(\eta^{\epsilon_n})_\epsilon}}\nabx\tilde{\mathbf u}^{\epsilon_n}&\weakto \mathbb I_{\mathcal O_{\tilde\eta}}\nabx\tilde{\mathbf u} \quad \mbox{in}\quad L^2(I;L^{2}(\mathcal O\cup S_{\alpha}))\ \tilde{\mathbb P}\mbox{-a.s.}, 
\\
\tilde \eta^{\epsilon_n}&\weakto^\ast \tilde \eta \quad \mbox{in}\quad L^\infty(I;W^{2,2}(\Gamma)) \ \tilde{\mathbb P}\mbox{-a.s.}, 
\\
\tilde \eta^{\epsilon_n}&\weakto^\ast \tilde \eta\quad \mbox{in}\quad W^{1,\infty}(I;L^{2}(\Gamma)) \ \tilde{\mathbb P}\mbox{-a.s.}, 
\\
\tilde \eta^{\epsilon_n}&\weakto \tilde \eta \quad \mbox{in}\quad W^{1,2}(I;W^{s,2}(\Gamma)) \ \tilde{\mathbb P}\mbox{-a.s.}, 
\\
\tilde \eta^{\epsilon_n}&\weakto \tilde \eta \quad \mbox{in}\quad L^{2}(I;W^{s+2,2}(\Gamma)) \ \tilde{\mathbb P}\mbox{-a.s.}, 
\end{aligned}
\end{align}
as well as (recalling the definition of $\tau_\sharp$ from \eqref{eq:tau})
\begin{align}\label{wWS116B}\begin{aligned}
\int_{I}\int_{\mathcal O\cup S_{\alpha}}&\mathbb I_{\mathcal O_{(\tilde\eta^{\epsilon_n})_\epsilon}}
\bfu^{\epsilon_n}\cdot\mathscr F^{(\tilde\eta^{\epsilon_n})_\epsilon}\partial_t\tilde\eta^{\epsilon_n}\dxt+\int_{I}\int_\Gamma|\partial_t\tilde\eta^{\epsilon_n}|^2\,\dy \dt
\\
&\longrightarrow \int_{I}\int_{\mathcal O\cup S_{\alpha}}\mathbb I_{\mathcal O_{\tilde\eta}}
\tilde\bfu\cdot\mathscr F^{\tilde\eta}\partial_t\eta\dxt+\int_{I}\int_\Gamma|\partial_t\tilde\eta|^2\,\dy \dt
\end{aligned}
\end{align}
and
\begin{align}\label{wWS116C}\begin{aligned}
\int_{I}\int_{\mathcal O\cup S_{\alpha}}&
\mathbb I_{\mathcal O_{(\tilde\eta^{\epsilon_n})_\epsilon}}
\tilde\bfu^{\epsilon_n}\cdot(\tilde\bfu^{\epsilon_n}-\mathscr F^{(\tilde\eta^{\epsilon_n})_\epsilon}\partial_t\tilde\eta^{\epsilon_n})\dxt
\\
&\longrightarrow \int_{I}\int_{\mathcal O\cup S_{\alpha}}
\mathbb I_{\mathcal O_{\tilde\eta}}
\tilde\bfu\cdot(\tilde\bfu-\mathscr F^{\tilde\eta}\partial_t\tilde\eta)\dxt
\end{aligned}
\end{align}
$\tilde{\mathbb P}$-a.s.
\end{enumerate}
\end{proposition}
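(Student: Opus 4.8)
The plan is to follow the same pattern as the proofs of Propositions \ref{prop:skorokhodN} and \ref{prop:skorokhod'}: first one shows that the laws of the collection of random variables listed in $\tilde{\boldsymbol{\Theta}}^{\epsilon_n}$ (regarded, for the moment, on the original probability space) form a tight sequence on $\chi_{\tt high}$, and then one invokes Jakubowski's version of the Skorokhod representation theorem, see \cite{jakubow} (or \cite[Theorem A.1]{on2} for a statement combining Prokhorov's and Skorokhod's theorems on quasi-Polish spaces), to pass to a new probability space on which a non-relabelled subsequence converges almost surely in the topology of $\chi_{\tt high}$. The only structurally new feature compared with Proposition \ref{prop:skorokhod'} is that the shell path space now records fractional-in-space regularity, so the tightness of the components of $\chi_\eta^{\tt high}$ living in $L^2(I;W^{s+2,2}(\Gamma))$ and $W^{1,2}(I;W^{s,2}(\Gamma))$ must be extracted from the moment estimate \eqref{GalerkinConvEta1NpS} rather than from a pathwise energy bound.

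For the tightness step I would argue component by component. The Brownian motions $B_t^{\epsilon_n}$ all have the same law on $C(\overline I)$, so that family is trivially tight. The energy-type bounds \eqref{GalerkinConvEta1epsp}--\eqref{GalerkinVolneps2} hold almost surely with a deterministic constant, so, after extension of $\bu^{\epsilon_n}$ by zero to $\mathcal O\cup S_\alpha$, the laws of $\bu^{\epsilon_n}$, $\nabx\bu^{\epsilon_n}$ and $\eta^{\epsilon_n}$ are supported in fixed balls of $L^2(I;L^2(\mathcal O\cup S_\alpha))$, respectively of $W^{1,\infty}(I;L^2(\Gamma))\cap L^\infty(I;W^{2,2}(\Gamma))$; such balls are relatively compact in the weak, resp.\ weak-$\ast$, topologies, which gives tightness on $\chi_\bu$, $\chi_{\nabla\bu}$ and on the first two factors of $\chi_\eta^{\tt high}$. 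The remaining factors $\big(L^2(I;W^{s+2,2}(\Gamma)),w\big)$ and $\big(W^{1,2}(I;W^{s,2}(\Gamma)),w\big)$ are handled by \eqref{GalerkinConvEta1NpS}: Markov's inequality applied to the $p$-th moment shows that, up to an event of arbitrarily small probability, $\eta^{\epsilon_n}$ stays in a fixed ball of those reflexive separable spaces, which is again weakly relatively compact. Tightness of the two components valued in $\big(L^2(I;X'\times X),\tau_\sharp\big)$ is exactly the content of Lemma \ref{lemma:fg''} (which verifies the hypotheses of Theorem \ref{thm:auba}). Tightness of the joint law follows since a finite product of tight families is tight.

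Each factor of $\chi_{\tt high}$ is a quasi-Polish (sub-Polish) space: $C(\overline I)$ is Polish; the weak and weak-$\ast$ topologies entering $\chi_\bu$, $\chi_{\nabla\bu}$ and $\chi_\eta^{\tt high}$ carry, by separability of the relevant (pre)duals, countable families of continuous functionals separating points; and $\big(L^2(I;X'\times X),\tau_\sharp\big)$ was already noted to be quasi-Polish. Hence the intersection topology defining $\chi_\eta^{\tt high}$, being the join of quasi-Polish topologies, is quasi-Polish, and so is the product $\chi_{\tt high}$. Jakubowski's theorem therefore produces $(\tilde\Omega,\tilde{\mathfrak F},\tilde{\mathbb P})$, the variables $\tilde{\boldsymbol{\Theta}}^{\epsilon_n}$ with the laws asserted in (a), and their $\tilde{\mathbb P}$-a.s.\ convergence to some $\tilde{\boldsymbol{\Theta}}$ in $\chi_{\tt high}$; reading off the individual factors gives \eqref{wWS116}, while $\rightarrow^\sharp$-convergence in the two $\tau_\sharp$-factors is, by the definition in \eqref{eq:tau}, precisely the pair of scalar convergences \eqref{wWS116B} and \eqref{wWS116C}.

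What remains is to identify the structure of the limit. Since, on the original space, the indicator $\mathbb I_{\mathcal O_{(\eta^{\epsilon_n})_{\epsilon_n}}}$, the mollification $\eta\mapsto\eta_{\epsilon_n}$ and the solenoidal extension $\mathscr F^{(\cdot)}$ from Proposition \ref{prop:musc} are Borel measurable as maps on the path space, equality of laws transfers the relations defining the entries of $\tilde{\boldsymbol{\Theta}}^{\epsilon_n}$ to the new space; combining the convergences \eqref{wWS116} with the boundary identity $\tilde\bu^{\epsilon_n}\circ\bm{\varphi}_{(\tilde\eta^{\epsilon_n})_{\epsilon_n}}=\bn\,\partial_t\tilde\eta^{\epsilon_n}$ and letting $\epsilon_n\to 0$ (so that the mollification in the geometry disappears) then identifies the limiting entries $\mathscr F^{\tilde\eta}\partial_t\tilde\eta$ and $\tilde\bu-\mathscr F^{\tilde\eta}\partial_t\tilde\eta$. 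I expect the main obstacle to be exactly this bookkeeping, namely confirming that the intersection topology of $\chi_\eta^{\tt high}$ is genuinely quasi-Polish and that the nonlinear operators entering the definition of the random variables are measurable on the path space, so that the Skorokhod variables may legitimately be written in the stated functional form; the tightness itself is routine given the estimates already in hand.
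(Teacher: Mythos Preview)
Your proposal is correct and matches the paper's approach exactly: the paper does not give a separate proof of this proposition but simply records that it follows ``similarly to Proposition~\ref{prop:skorokhod'}'' from the estimates \eqref{GalerkinConvEta1epsp}--\eqref{GalerkinConvEta2epsp}, Lemma~\ref{lemma:fg''} and the higher-regularity moment bound \eqref{GalerkinConvEta1NpS}, which is precisely the tightness-plus-Jakubowski argument you spell out. One minor imprecision: the factor $\big(W^{1,2}(I;W^{s,2}(\Gamma)),w\big)$ is already controlled by the pathwise bounds \eqref{GalerkinConvEta1epsp} and \eqref{GalerkinConvEta2epsp} rather than by \eqref{GalerkinConvEta1NpS}, so no Markov argument is needed there.
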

%Now we introduce the filtration on the new probability space, 
%which ensures the correct measurabilities of the new random variables.
%We denote by $\bfr_t$ the operator of restriction to the interval $[0,t]$ acting on various path spaces. In particular, if $X$ stands for one of the path spaces $L^r_{\mathrm loc}(0,\infty;L^r(\mathcal{O}))$ or $C_{\mathrm loc}([0,\infty),\mathfrak U_0)$ and $t\in[0,T]$, we define
%\begin{align}\label{restr}
%\bfr_t:X\rightarrow X|_{[0,t]},\quad f\mapsto f|_{[0,t]}.
%\end{align}
%Clearly, $ \bfr_t$ is a continuous mapping.
%Let $(\tilde{\mathfrak F}_t)_{t\geq0}$ and $(\tilde{\mathfrak F}_t^\epsilon)_{t\geq0}$ be the $\tilde{\mathbb P}$-augmented
%canonical filtration of the variables $\tilde{\boldsymbol{\Theta}}$ and
%$\tilde{\boldsymbol{\Theta}}^\epsilon $, respectively, that is
%\todo{add filtrations}

Similar to the analysis after Propositions \ref{prop:skorokhodN} and \ref{prop:skorokhod'}, we can define the $\tilde{\mathbb P}$-augmented
canonical filtrations $(\tilde{\mathfrak F}_t)_{t\geq0}$ and $(\tilde{\mathfrak F}_t^\epsilon)_{t\geq0}$ on the variables $\tilde{\boldsymbol{\Theta}}$ and
$\tilde{\boldsymbol{\Theta}}^\epsilon $, respectively, which ensures the correct measurabilities of the new random variables.

By \cite[Theorem 2.9.1]{BFHbook} the weak equation continuous to hold on the new probability space.
Combining \eqref{wWS116B} and \eqref{wWS116C} we have
\begin{align*}
\int_{I}\int_{\mathcal O_{(\tilde\eta^{\epsilon_n})_\epsilon}}&|\tilde\bfu^{\epsilon_n}|^2\dxt
+\int_{I}\int_\Gamma|\partial_t\tilde\eta^{\epsilon_n}|^2\,\dy \dt\\&\longrightarrow \int_{I}\int_{\mathcal O_{\tilde\eta}}|\tilde\bfu|^2\dxt
+\int_{I}\int_\Gamma|\tilde\partial_t\eta|^2\,\dy \dt
\end{align*}
 $\tilde{\mathbb P}$-a.s. By uniform convexity of the $L^2$-norm this implies
\begin{align*}
\tilde\eta^{\epsilon_n}&\rightarrow\tilde\eta\quad \mbox{in}\quad W^{1,2}(I;L^2(\Gamma)) ,\\
\mathbb I_{\mathcal O_{(\tilde\eta^{\epsilon_n})_\epsilon}}\tilde{\mathbf u}^{\epsilon_n}&\rightarrow \mathbb I_{\mathcal O_{\tilde\eta}}\tilde{\mathbf u}
 \quad \mbox{in}\quad L^2(I;L^{2}(\mathcal O\cup S_{\alpha}))\ \tilde{\mathbb P}\mbox{-a.s.}.
\end{align*}
This is sufficient to pass to the limit and the weak formulation of the equations (note that all terms except for the convective term can be treated 
by \eqref{wWS116}).
As far as the energy balance is concerned, this is even easier since there is no noise. By \eqref{wWS116} and the lower semi-continuity of the involved quantities
we can pass to the limit (obtaining only an inequality).

\section*{Declarations}
%
%Some journals require declarations to be submitted in a standardised format. Please check the Instructions for Authors of the journal to which you are submitting to see if you need to complete this section. If yes, your manuscript must contain the following sections under the heading `Declarations':
%
%\begin{itemize}
%\item Funding
%\item Conflict of interest/Competing interests (check journal-specific guidelines for which heading to use)
%\item Ethics approval 
%\item Consent to participate
%\item Consent for publication
%\item Availability of data and materials
%\item Code availability 
%\item Authors' contributions
%\end{itemize}
%
%\noindent
%If any of the sections are not relevant to your manuscript, please include the heading and write `Not applicable' for that section. 
%%%%%%%%%%%%%%%%%%%%

\smallskip
\par\noindent
{\bf Funding}. Not applicable.

\smallskip
\par\noindent
{\bf Conflict of Interest}. The authors declares that they have no conflict of interest.

\smallskip
\par\noindent
{\bf Ethics approval}. Not applicable.

\smallskip
\par\noindent
{\bf Consent to participate}. Not applicable.

\smallskip
\par\noindent
{\bf Consent for publication}. Not applicable.

\smallskip
\par\noindent
{\bf Availability of data and materials}. Data sharing is not applicable to this article as no datasets were generated or analysed during the current study.

\smallskip
\par\noindent
{\bf Code availability}. Not applicable.

\smallskip
\par\noindent
{\bf Authors' contributions}. All authors contributed to the writing and review of this manuscript.


\begin{thebibliography}{10}
\providecommand{\url}[1]{{#1}}
\providecommand{\urlprefix}{URL }
\expandafter\ifx\csname urlstyle\endcsname\relax
  \providecommand{\doi}[1]{DOI~\discretionary{}{}{}#1}\else
  \providecommand{\doi}{DOI~\discretionary{}{}{}\begingroup
  \urlstyle{rm}\Url}\fi

\bibitem{fluid2014bodnar}
Bodn\'{a}r, T., Galdi, G.P., Ne\v{c}asov\'{a}, v. (eds.): Fluid-structure
  interaction and biomedical applications.
\newblock Advances in Mathematical Fluid Mechanics. Birkh\"{a}user/Springer,
  Basel (2014)

\bibitem{breit2022regularity}
Breit, D.: Regularity results in 2d fluid-structure interaction.
\newblock Math. Ann.  (online first).
\newblock \doi{10.1007/s00208-022-02548-9}.
\newblock
  \urlprefix\url{https://link.springer.com/article/10.1007/s00208-022-02548-9}

\bibitem{BFHbook}
Breit, D., Feireisl, E., Hofmanov\'{a}, M.: Stochastically forced compressible
  fluid flows, \emph{De Gruyter Series in Applied and Numerical Mathematics},
  vol.~3.
\newblock De Gruyter, Berlin (2018)

\bibitem{breit2021incompressible}
Breit, D., Mensah, P.R.: An incompressible polymer fluid interacting with a
  {K}oiter shell.
\newblock J. Nonlinear Sci. \textbf{31}(1), Paper No. 25, 56 (2021)


\bibitem{breitSchw2018compressible}
Breit, D., Schwarzacher, S.: Compressible fluids interacting with a
  linear-elastic shell.
\newblock Arch. Ration. Mech. Anal. \textbf{228}(2), 495--562 (2018)


\bibitem{chakrabarti2002theory}
Chakrabarti, S.K.: The theory and practice of hydrodynamics and vibration,
  vol.~20.
\newblock World scientific (2002)
 
\bibitem{CCR} Chen, X., A.B. Cruzeiro, A. B., and Ratiu, T. S.: Stochastic Variational Principles for Dissipative Equations with Advected Quantities.
\newblock J. Nonlinear Sci. \textbf{33}, no. 1, Paper No. 5 (2023)

\bibitem{Holm2}
Cotter, C.J., Gottwald, G.A., Holm, D.D.: Stochastic partial differential fluid
  equations as a diffusive limit of deterministic {L}agrangian multi-time
  dynamics.
\newblock Proc. A. \textbf{473}(2205), 20170,388, 10 (2017)


\bibitem{Fl1}
Crisan, D., Flandoli, F., Holm, D.D.: Solution properties of a 3{D} stochastic
  {E}uler fluid equation.
\newblock J. Nonlinear Sci. \textbf{29}(3), 813--870 (2019)


\bibitem{DaPr}
Da~Prato, G., Zabczyk, J.: Stochastic equations in infinite dimensions,
  \emph{Encyclopedia of Mathematics and its Applications}, vol. 152, second
  edn.
\newblock Cambridge University Press, Cambridge (2014) 


\bibitem{dowell2015modern}
Dowell, E.H.: A modern course in aeroelasticity, \emph{Solid Mechanics and its
  Applications}, vol. 217, enlarged edn.
\newblock Springer, Cham (2015)


\bibitem{Fl0}
Flandoli, F., Gubinelli, M., Priola, E.: Well-posedness of the transport
  equation by stochastic perturbation.
\newblock Invent. Math. \textbf{180}(1), 1--53 (2010)


\bibitem{Fl3}
Flandoli, F., Luo, D.: High mode transport noise improves vorticity blow-up
  control in 3{D} {N}avier-{S}tokes equations.
\newblock Probab. Theory Related Fields \textbf{180}(1-2), 309--363 (2021)


\bibitem{Fl2}
Flandoli, F., Pappalettera, U.: 2{D} {E}uler equations with {S}tratonovich
  transport noise as a large-scale stochastic model reduction.
\newblock J. Nonlinear Sci. \textbf{31}(1), Paper No. 24, 38 (2021)


\bibitem{MR1873203}
Formaggia, L., Gerbeau, J.F., Nobile, F., Quarteroni, A.: On the coupling of
  3{D} and 1{D} {N}avier-{S}tokes equations for flow problems in compliant
  vessels.
\newblock Comput. Methods Appl. Mech. Engrg. \textbf{191}(6-7), 561--582
  (2001)
  

\bibitem{MR2808162}
Galdi, G.P.: An introduction to the mathematical theory of the
  {N}avier-{S}tokes equations, second edn.
\newblock Springer Monographs in Mathematics. Springer, New York (2011)


\bibitem{grandmont2008existence}
Grandmont, C.: Existence of weak solutions for the unsteady interaction of a
  viscous fluid with an elastic plate.
\newblock SIAM J. Math. Anal. \textbf{40}(2), 716--737 (2008)


\bibitem{MR3466847}
Grandmont, C., Hillairet, M.: Existence of global strong solutions to a
  beam-fluid interaction system.
\newblock Arch. Ration. Mech. Anal. \textbf{220}(3), 1283--1333 (2016)


\bibitem{krylov}
Gy\"{o}ngy, I., Krylov, N.: Existence of strong solutions for {I}t\^{o}'s
  stochastic equations via approximations.
\newblock Probab. Theory Related Fields \textbf{105}(2), 143--158 (1996)


\bibitem{harouna2017stochastic}
Harouna, S.K., M\'{e}min, E.: Stochastic representation of the {R}eynolds
  transport theorem: revisiting large-scale modeling.
\newblock Comput. \& Fluids \textbf{156}, 456--469 (2017)


\bibitem{Holm3}
Holm, D.D.: Variational principles for stochastic fluid dynamics.
\newblock Proc. A. \textbf{471}(2176), 20140,963, 19 (2015)


\bibitem{Holm1}
Holm, D.D.: Stochastic modelling in fluid dynamics: {I}t\^{o} versus
  {S}tratonovich.
\newblock Proc. A. \textbf{476}(2237), 20190,812, 12 (2020)


\bibitem{jakubow}
Jakubowski, A.: Short communication: The almost sure skorokhod representation
  for subsequences in nonmetric spaces.
\newblock Theory Probab. Appl. \textbf{42}(1), 167--174 (1998)

\bibitem{KuCa1}
Kuan, J., \v{C}ani\'{c}, S.: A stochastically perturbed fluid-structure
  interaction problem modeled by a stochastic viscous wave equation.
\newblock J. Differential Equations \textbf{310}, 45--98 (2022)

\bibitem{KuCa2}
Kuan, J., \v{C}ani\'{c}, S.: Well-posedness of solutions to stochastic fluid-structure interaction.
(to appear) Journal of Mathematical Fluid Mechanics (2023)


\bibitem{lengeler2014weak}
Lengeler, D., R\ocirc{u}\v{z}i\v{c}ka, M.: Weak solutions for an incompressible
  {N}ewtonian fluid interacting with a {K}oiter type shell.
\newblock Arch. Ration. Mech. Anal. \textbf{211}(1), 205--255 (2014)


\bibitem{muha2013existence}
Muha, B., Cani\'{c}, S.: Existence of a weak solution to a nonlinear
  fluid-structure interaction problem modeling the flow of an incompressible,
  viscous fluid in a cylinder with deformable walls.
\newblock Arch. Ration. Mech. Anal. \textbf{207}(3), 919--968 (2013)


\bibitem{MuSc}
Muha, B., Schwarzacher, S.: Existence and regularity for weak solutions for a
  fluid interacting with a non-linear shell in 3d.
\newblock Ann. Inst. H. Poincar\'e C Anal. Non Lin\'eaire \textbf{39},
  1369--1412 (2022)

\bibitem{on2}
Ondrej\'{a}t, M.: Stochastic nonlinear wave equations in local {S}obolev
  spaces.
\newblock Electron. J. Probab. \textbf{15}, no. 33, 1041--1091 (2010).

\bibitem{PrRo}
Pr\'{e}v\^{o}t, C., R\"{o}ckner, M.: A concise course on stochastic partial
  differential equations, \emph{Lecture Notes in Mathematics}, vol. 1905.
\newblock Springer, Berlin (2007)

\bibitem{ScSu}
Schwarzacher, S., Su, P.: Regularity for an elastic beam interacting with 2d
  navier-stokes equations.
\newblock arXiv preprint arXiv:2308.04253  (2023)

\bibitem{KuCa3}
Tawri, K., \v{C}ani\'{c}, S.: Existence of martingale solutions to a
  nonlinearly coupled stochastic fluid-structure interaction problem.
\newblock arXiv preprint arXiv:2310.03961  (2023)

\end{thebibliography}
\end{document}